\newcommand{\set}[1]{\{#1\}}
\newcommand{\real}{\mathbb{R}}
\newtheorem{theorem}{Theorem}[section]
\newtheorem{lemma}{Lemma}[section]
\newtheorem{proposition}{Proposition}[section]
\newtheorem{corollary}{Corollary}[section]
\newtheorem{criteria}{Criteria}[section]
\DeclareMathOperator{\bu}{\mathbf{u}}
\DeclareMathOperator{\bn}{\mathbf{n}}
\DeclareMathOperator\NA{NA}
\DeclareMathOperator\newt{Newt}
\DeclareMathOperator\sign{sign}
\DeclareMathOperator{\gna}{\gamma\text{NAA}(\hat{r})}
\newcommand{\gnaarg}[1]{\gamma\text{NAA}(#1)}
\DeclareMathOperator\ri{Ri}
\title{Analysis of an Adaptive Safeguarded 
Newton-Anderson Algorithm of Depth One with Applications to 
Fluid Problems}
\date{\today}
 \author{Matt Dallas
\footnote{Corresponding author: Matt Dallas.  MD and SP were 
supported in part by the US NSF  
Project DMS 2011519 (PI: Pollock). LR was supported by 
US NSF Project DMS 2011490 (PI: Rebholz).\\ 
{\em Keywords}:
Anderson acceleration, Newton's Method, safeguarding, 
singular points, bifurcation. \\
{\em 2020 Mathematics Subject Classification}: 65J15} $^{,1}$ 
\quad
Sara Pollock$^{2}$
\quad
Leo G. Rebholz$^{3}$\\
\small{$^1$Department of Mathematics, University of Dallas, USA
(mdallas@udallas.edu)}\\
\small{$^2$Department of Mathematics, University of Florida, 
USA (s.pollock@ufl.edu)}\\
\small{$^3$Department of Mathematics, Clemson University, USA (rebholz@clemson.edu)} 
}
\begin{document}

\maketitle

\begin{abstract}
The purpose of this paper is to develop a 
practical strategy to accelerate Newton's method in 
the vicinity of singular points.
We present an adaptive safeguarding scheme 
with a tunable parameter, which we call 
adaptive $\gamma$-safeguarding, that 
one can use in tandem with Anderson acceleration
to improve the performance of Newton's 
method when solving 
problems at or near singular points. The key features of adaptive 
$\gamma$-safeguarding are that it converges locally 
for singular problems, and it can detect nonsingular problems 
automatically, 
in which case the Newton-Anderson 
iterates are scaled towards a standard Newton step. 
The result is a flexible algorithm that performs well for singular and nonsingular problems, and can recover convergence from both standard Newton and Newton-Anderson with the 
right parameter choice.
This leads to faster local convergence compared to both 
Newton's method, and Newton-Anderson without safeguarding, 
with effectively no additional computational cost.
We demonstrate three strategies 
one can use when implementing Newton-Anderson and 
$\gamma$-safeguarded Newton-Anderson to solve parameter-dependent
problems near singular points. For our benchmark problems, we  
take two parameter-dependent incompressible flow 
systems: flow 
in a channel and Rayleigh-B\'enard convection. 
\end{abstract}

\section{Introduction}
Nonlinear systems of equations 
of the form $f(x) = 0$, with $f:\mathbb{R}^n\to\mathbb{R}^n$,  
arise frequently in applications. 
Many times the solutions depend on parameters 
(see \cite{BePi20,BoDaFa22,Ku19,pichithesis,PiQuRo20,PiStBaRo22,Ue21} 
and references therein) that can have a significant effect 
on the solution. 
Of particular interest are bifurcation points, which 
are characterized by the breakdown of 
local uniqueness 
of a solution for a particular parameter, and 
correspond to a qualitative change in the 
solution's behavior \cite{BePi20,Kiel12,pichithesis}.  
Sets of solutions of similar qualitative 
behavior are called branches \cite{pichithesis}.
Studying these branches provides a 
more complete understanding of the solutions, 
and has applications in a wide variety of fields such as 
echocardiography \cite{PiStBaRo22}, economics \cite{YuSuRe20}, physics \cite{PiQuRo20}, and engineering 
\cite{ZhQi23}. 
A necessary condition for bifurcation 
comes from 
the Implicit Function Theorem, which says that 
the Jacobian at a solution $x^*$, $f'(x^*)$, is 
necessarily singular 
if $x^*$ is a bifurcation point \cite[p.~8]{Kiel12}. 
Bifurcation points are thus examples of {\it singular points}, i.e.,
points $x$ for which $f'(x)$ is singular.
Techniques for computing solution branches such 
as continuation \cite{Ue21} and deflation
\cite{BoDaFa22,FaBiFu15} require many solves of $f(x)=0$ 
as the parameter is varied, and these problems become 
singular or nearly singular near bifurcation points. 
A popular method for solving nonlinear equations 
is Newton's method defined in Algorithm \ref{alg:newt}. 

\begin{algorithm}[H]
\begin{algorithmic}[1]
\caption{Newton}
\label{alg:newt}
\STATE{Choose $x_0\in\mathbb{R}^n$.} 
\FOR{k=1,2,...}
\STATE $w_{k+1}\gets -f'(x_k)^{-1}f(x_k)$
\STATE $x_{k+1}\gets x_k+w_{k+1}$
\ENDFOR 
\end{algorithmic}
\end{algorithm}

When $f'(x)$ is Lipschitz continuous and $f'(x^*)$ is 
nonsingular,
Newton's method exhibits local quadratic convergence in a ball 
centered at $x^*$. This is essentially the celebrated 
Newton-Kantorovich theorem \cite{Or68}.
If we remove the nonsingular assumption and let $f'(x^*)$ 
be singular, then the convergence 
behavior changes dramatically.  
Rather than local quadratic convergence from any $x_0$ in a 
sufficiently small ball around $x^*$, we see local linear convergence 
in a starlike domain of convergence  
around $x^*$ \cite{DeKeKe83,DeKe80,Gr80,Re78,Re79}. Since bifurcation points are necessarily 
singular points, this means 
that continuation or deflation algorithms using Newton's method 
may converge slowly or fail to converge at or near a bifurcation
point. This challenge has motivated the study of modifications 
\cite{DeKeKe83,DeKe82,FiIzSo21-2,FiIzSo21-1,Gr85,Gr80,HMT09,KeSu83} 
or alternatives 
\cite{BeFi12,BFHIY14,BelMo15,FaZe13,KaYaFu04,ScFr84} to Newton's 
method that can improve convergence behavior at singular points. 
Among the modifications, Richardson extrapolation and overrelaxation 
are popular and perform well. They can achieve superlinear and 
arbitrarily fast linear convergence respectively under certain 
conditions \cite{Gr85} at the cost of additional 
function evaluations and some knowledge of 
the order of the singularity \cite{Gr80,Gr85}. The order may be inferred from 
monitoring the singular values of the Jacobian as the solve 
progresses. A popular alternative to Newton's 
method for singular problems is the Levenberg-Marquardt method 
\cite{BeFi12,BFHIY14,BelMo15,FaZe13,KaYaFu04}. Under standard 
assumptions and the 
local error bound, or local Lipschitzian error bound, the 
Levenberg-Marquardt method can achieve local quadratic 
convergence \cite{KaYaFu04}. The local error bound is known to be much weaker 
than the standard nonsingularity assumption. Indeed, it can hold 
even for singular problems \cite{BFHIY14}. However, without 
the local error bound or nonsingularity, 
Levenberg-Marquardt is not guaranteed
such success. In the absence of the local error bound, one 
may insist that the function $f$ is 2-regular 
\cite{FiIzSo21-2,FiIzSo21-1,IzKuSo18-1,IzKuSo18-2}, in which case 
Levenberg-Marquardt converges locally linearly in a starlike domain 
much like Newton's method \cite{IzKuSo18-1}. 

The method of interest
in this paper, Anderson acceleration, has a long track record of 
accelerating linearly converging fixed-point methods, and 
has been applied in many different fields 
\cite{AJW17,LWWY12,PaMa22,sim-app-2018,PRX18,TRL04,WHdS21}.
Further, when applied as a modification
to Newton's method at singular points,  
in contrast to Richardson extrapolation or overrelaxation 
discussed in \cite{Gr85}, this success requires 
 no knowledge of the order of the root or additional function 
evaluations. It was also shown recently that under a condition 
equivalent to 2-regularity (discussed further in Section 
\ref{sec:gsg}), Anderson accelerates Newton's 
method when applied to singular problems \cite{DaPo23}, 
and therefore outperforms Levenberg-Marquardt in 
the absence of the local error bound. This was demonstrated  
numerically in \cite{DaPo23}.
Also known as 
Anderson extrapolation or Anderson mixing \cite{WaNi11}, Anderson 
acceleration was first introduced in 1965 by 
D.G. Anderson in \cite{Anderson65} to improve the convergence of fixed-point iterations applied to 
integral equations. The algorithm combines the previous $m+1$ 
iterates and update steps into a 
new iterate at each step of the solve. The number $m$ is 
commonly known as the 
algorithmic depth. The combination of the $m+1$ iterates often 
involves solving a least-squares problem, 
but since $m$ is typically small, the computational cost of this step is in general orders of magnitude less than that of a single linear solve.
There are problems for which taking $m$ much larger can be beneficial \cite{PoRe21,WaNi11}, and later in this paper we will see numerically 
that increasing
$m$ can improve or recover convergence near bifurcation points. 
With greater depths, the least-squares problem 
may suffer from ill-conditioning if proper care 
is not taken in the implementation \cite{PoRe23}.
In \cite{DaPo23}, the authors developed a convergence and 
acceleration theory for Anderson accelerated 
Newton's method with depth $m=1$, defined
in Algorithm \ref{alg:na1}, 
applied to singular problems. This is a special case of depth 
$m\geq 1$ given in Algorithm \ref{alg:nam}. 

\begin{algorithm}[H]
\begin{algorithmic}[1]
\caption{Newton-Anderson(1)}
\label{alg:na1}
\STATE{Choose $x_0\in\mathbb{R}^n$. Set $w_1=-f'(x_0)^{-1}f(x_0)$, and $x_1=x_0+w_1$.}
\FOR{k=1,2,...}
\STATE $w_{k+1}\gets -f'(x_k)^{-1}f(x_k)$
\STATE $\gamma_{k+1}\gets (w_{k+1}-w_k)^Tw_{k+1}/\|w_{k+1}-w_k\|_2^2$
\STATE $x_{k+1}\gets x_k+w_{k+1}-\gamma_{k+1}(x_k-x_{k-1}+w_{k+1}-w_k)$
\ENDFOR 
\end{algorithmic}
\end{algorithm}

\begin{algorithm}[H]
\begin{algorithmic}[1]
\caption{Newton-Anderson(m)}
\label{alg:nam}
\STATE{Choose $x_0\in\mathbb{R}^n$ and $m\geq 0$. 
Set $w_1=-f'(x_0)^{-1}f(x_0)$, and $x_1=x_0+w_1$.}
\FOR{k=1,2,...}
\STATE $m_k\gets \min\{k,m\}$
\STATE $w_{k+1}\gets -f'(x_k)^{-1}f(x_k)$
\STATE $F_k = \big( (w_{k+1}-w_k)\cdots (w_{k-m+2}-w_{k-m+1})\big)$
\STATE $E_k = \big( (x_k-x_{k-1})\cdots (x_{k-m+1}-x_{k-m})\big)$
\STATE $\gamma_{k+1}\gets \text{argmin}_{\gamma\in\mathbb{R}^m} \|w_{k+1}-F_k\gamma\|_2^2$
\STATE $x_{k+1}\gets x_k+w_{k+1}-(E_k+F_k)\gamma_{k+1}$
\ENDFOR 
\end{algorithmic}
\end{algorithm}

A major challenge when proving 
convergence of Newton-like methods near singular points 
is ensuring that the iterates remain well-defined. 
The authors in \cite{DaPo23} introduced a novel 
safeguarding scheme called 
$\gamma$-safeguarding, defined in Algorithm \ref{alg:gsg-og} 
in Section \ref{sec:gsg}, 
to deal with this problem. 
The result was a convergence proof 
for $\gamma$-safeguarded Newton-Anderson,
and 
it was 
observed numerically to 
perform better or no worse
compared to standard 
Newton-Anderson, particularly when applied to nonsingular
problems. 

The purpose of this paper is to extend these 
ideas of
\cite{DaPo23} by developing 
an adaptive version of 
$\gamma$-safeguarding 
that automatically 
detects nonsingular problems, and to demonstrate the effectiveness 
of Newton-Anderson and adaptive 
$\gamma$-safeguarded Newton-Anderson at solving parameter-dependent 
PDEs near bifurcation points in fluid problems.
This new adaptive scheme is 
proven to be locally convergent under the 
same conditions as the non-adaptive scheme,
and the automatic detection of nonsingular problems
enables local quadratic 
convergence 
when applied to nonsingular problems.
Such a property is desirable 
when solving nonlinear problems {\it near} bifurcation points, because even if the problem 
itself is not singular, convergence can still be affected if it is close to a singular 
problem \cite{DeKe85}.
One would like to enjoy the benefits of 
Newton-Anderson in the preasymptotic regime such as 
a larger domain of convergence \cite{posc20}, but not 
lose quadratic convergence in the 
asymptotic regime if the problem is nonsingular. It 
is not always known a priori if a problem is 
singular or nonsingular, and 
Anderson acceleration can reduce the order of convergence 
when applied to superlinearly converging iterations 
such as Newton's method applied to a nonsingular
problem \cite{ReXi23}. This problem is solved with adaptive 
$\gamma$-safeguarded Newton-Anderson  
with effectively no additional computational cost. 
We also show numerically
that increasing the algorithmic depth 
of the Newton-Anderson algorithm can recover convergence 
when Newton fails
near bifurcation 
points, but only for specific choices of $m$.

The algorithms of interest in this paper are 
Newton, defined in Algorithm \ref{alg:newt}, Newton-Anderson 
with algorithmic depth $1$ \big(NA($1$)\big)  
defined in Algorithm \ref{alg:na1},
Newton-Anderson with algorithmic depth $m$ \big(NA($m$)\big)  
defined in Algorithm \ref{alg:nam}, 
$\gamma$-safeguarded Newton-Anderson \big($\gamma$NA($r$)\big), defined in Algorithm \ref{alg:gsgna}, and adaptive $\gamma$-safeguarded Newton-Anderson \big($\gna$\big), defined in 
Algorithm \ref{alg:adapgsgna}.
We implement $\gamma$NA($r$) and $\gna$ by replacing line 
5 in Algorithm \ref{alg:na1} with, respectively, $\gamma$-safeguarding (Algorithm \ref{alg:gsg-og}) and 
{\it adaptive} $\gamma$-safeguarding (Algorithm \ref{alg:adapgsg}).
The norm in the algorithms is the Euclidean norm. 
There should be no confusion
between NA($m$) and $\gamma$NA($r$) or $\gna$ since $\gamma$-safeguarding is currently 
only developed for depth $m=1$. The rest of the paper 
is organized as follows. In Section \ref{sec:gsg}, we 
review the original $\gamma$-safeguarding algorithm and its 
role in the convergence theory developed in \cite{DaPo23}. 
In Section \ref{sec:adapgsg}, 
we introduce the new adaptive $\gamma$-safeguarding algorithm 
and prove that $\gna$ can recover local quadratic 
convergence when applied to nonsingular problems 
in Corollary \ref{cor:localquad}.
We conclude in Section 
\ref{sec:numerics} by applying NA and $\gna$ to two 
parameter-dependent incompressible flow systems, and discussing 
various strategies one can use when employing $\gamma$-safeguarding.   

\section{The $\gamma$-safeguarding algorithm}
\label{sec:gsg}

For the theory discussed in Section 2 and Section 3, 
unless stated otherwise, we take 
$f:\real^n\to\real^n$ to be a $C^3$ function, 
$f(x^*) = 0$, $N = \text{null}\left(f'(x^*)\right)$, $R = \text{range} 
\left(f'(x^*)\right)$, $\dim N = 1$, and $P_N$ and 
$P_R$ to be the 
orthogonal projections onto $N$ and $R$ respectively.
The assumption that $R\perp N$ is common in the 
singular Newton literature 
\cite{DeKeKe83,griewank-thesis}, and no generality 
is lost in finite dimensions.
Indeed, Newton's method is essentially 
invariant under nonsingular affine transformations of the 
domain and nonsingular linear transformations of the range \cite{deuf05}. Thus to determine the convergence behavior 
of Newton's method applied to a general $C^3$ 
function $F:\mathbb{R}^n\to\mathbb{R}^n$, it suffices to study 
that of $f(x) = U^TF(x)V$, where $U$ and $V$ come from the 
single value decomposition $F'(x^*) = U\Sigma V^T$. Since 
$f'(x^*) = U^TF'(x^*)V = \Sigma$, it follows that 
$\text{null}\left(f'(x^*)\right)\perp \text{range}\left(f'(x^*)\right)$. 
Lastly, let $\|\cdot\|$ denote the Euclidean 2-norm, $B_{\rho}(x)$ denote a ball of radius $\rho$ centered at $x$, 
$e_k = x_k-x^*$, $w_{k+1} = -f'(x_k)^{-1}f(x_k)$, 
 and 
\begin{align}\label{eq:opgain}
\theta_{k+1} = \frac{\|w_{k+1}-\gamma_{k+1}(w_{k+1}-w_k)\|}{\|w_{k+1}\|},
\end{align}
where $\gamma_{k+1}$ is computed via Algorithm \ref{alg:na1}. 
The term $\theta_{k+1}$ 
is known as the {\it optimization gain},
and is key to determining when Anderson acceleration 
is successful both in the singular and nonsingular 
cases \cite{DaPo23,PoRe21}.

\begin{algorithm}[H] \caption{$\gamma$-safeguarding} \label{alg:gsg-og}
\begin{algorithmic}[1]
\STATE {Given $x_k$, $x_{k-1}$, $w_{k+1}$, $w_k$, $\gamma_{k+1}$, and $r\in(0,1)$. Set $\lambda=1$.
\STATE $\beta_{k+1} \gets r\|w_{k+1}\|/\|w_k\|$}
\IF{$\gamma_{k+1}=0$ {\bf or } $\gamma_{k+1}\geq 1$}
\STATE $\lambda \gets 0$
\ELSIF {$|\gamma_{k+1}|/|1-\gamma_{k+1}|>\beta_{k+1}$} 
\STATE $\lambda \gets \dfrac{\beta_{k+1}}{\gamma_{k+1}\left(\beta_{k+1}+\sign(\gamma_{k+1})\right)}$
\ENDIF 
\STATE $x_{k+1}\gets  x_k+w_{k+1}-\lambda\gamma_{k+1}(x_k-x_{k-1}+w_{k+1}-w_k)$
\end{algorithmic}
\end{algorithm}

\begin{algorithm}[H]
\begin{algorithmic}[1]
\caption{$\gamma$-Safeguarded Newton-Anderson $\left(\gamma\text{NA}(r)\right)$}
\label{alg:gsgna}
\STATE{Choose $x_0\in\mathbb{R}^n$ and $r\in(0,1)$. Set $w_1=-f'(x_0)^{-1}f(x_0)$, and $x_1=x_0+w_1$}
\FOR{k=1,2,...}
\STATE $w_{k+1}\gets -f'(x_k)^{-1}f(x_k)$
\STATE $\gamma_{k+1}\gets (w_{k+1}-w_k)^Tw_{k+1}/\|w_{k+1}-w_k\|_2^2$
\STATE {$\beta_{k+1} \gets r\|w_{k+1}\|/\|w_k\|$}
\STATE $\lambda \gets 1$
\IF{$\gamma_{k+1}=0$ {\bf or } $\gamma_{k+1}\geq 1$}
\STATE $\lambda \gets 0$
\ELSIF {$|\gamma_{k+1}|/|1-\gamma_{k+1}|>\beta_{k+1}$} 
\STATE $\lambda \gets \dfrac{\beta_{k+1}}{\gamma_{k+1}\left(\beta_{k+1}+\sign(\gamma_{k+1})\right)}$
\ENDIF 
\STATE $x_{k+1}\gets  x_k+w_{k+1}-\lambda\gamma_{k+1}(x_k-x_{k-1}+w_{k+1}-w_k)$
\ENDFOR 
\end{algorithmic}
\end{algorithm}

To implement 
$\gamma$NA$(r)$, one replaces line 5 in Algorithm \ref{alg:na1} 
with Algorithm \ref{alg:gsg-og} giving Algorithm 
\ref{alg:gsgna}. 
The resulting steps, $x_{k+1} = x_k+w_{k+1}-\lambda\gamma_{k+1}(x_k-x_{k-1}+w_{k+1}-w_k)$, can be viewed as 
NA steps scaled by $\lambda$ towards a Newton step based on a 
user-chosen parameter, $r$, which is set at the start of the solve. This parameter determines 
how strongly the NA steps are scaled towards a Newton step, 
i.e., how close a $\gamma$NA$(r)$ step is to a 
Newton step.
When $r\approx 0$, 
the $\gamma$NA$(r)$ iterates will be heavily scaled towards the latest 
Newton step, and when $r\approx 1$, the $\gamma$NA$(r)$ 
iterates will behave more like 
standard NA. The idea behind 
$\gamma$-safeguarding is to take advantage of the particular 
convergence behavior of Newton's method near singular points, which
we now describe.  
Since $\dim N = 1$, $N$ is spanned by 
some nonzero $\varphi\in\mathbb{R}^n$. If the linear operator 
$\hat{D}(\cdot):= P_Nf''(x^*)\left(\varphi,P_N(\cdot)\right)$
is nonsingular as a map from $N$ to $N$, 
then there exists 
$\hat{\rho}>0$ and $\hat{\sigma}>0$ such that $f'(x)$ 
is nonsingular for all $x\in\hat{W}:= B_{\hat{\rho}}(x^*)
\cap \{ x : \|P_R(x-x^*)\| < \hat{\sigma} \|P_N(x-x^*)\|\}$ and Newton's method 
converges linearly to $x^*$ from any $x_0\in\hat{W}$ 
\cite{DeKeKe83}. 
The assumption that $\hat{D}$ is nonsingular as a linear map on 
$N$ is equivalent to the assumption that $f$ is 2-regular at $x^*$ 
in the direction 
$\varphi$ \cite{FiIzSo21-1}. A stronger result due to Griewank 
\cite[Theorem 6.1]{Gr80} says that when $\hat{D}$ is 
nonsingular, Newton's method converges from every 
$x_0$ in a starlike region with density 1 with respect to $x^*$, 
and the iterates lead into $\hat{W}$ provided 
$\|x_0-x^*\|$ is sufficiently small. Thus for our purposes in this 
paper studying local convergence of NA, it suffices to study 
the behavior of iterates in $\hat{W}$. Though the focus of this work is the $\dim N = 1$ case, we note that Griewank's Theorem \cite[Theorem 6.1]{Gr80} holds for 
$\dim N > 1$, and numerical experiments from \cite{DaPo23} on 
small-scale problems indicate that NA and $\gna$ are effective when $\dim N > 1$. While the current theoretical results 
for $\gna$ require $\dim N = 1$, extensions of these results for $\dim N > 1$ will be studied in future work.

The main challenge in accelerating Newton's method is ensuring
the iterates remain in $\hat{W}$, which requires 
$\|P_R(x-x^*)\|/\|P_N(x-x^*)\|$ to remain bounded. In 
other words, the iterates can't be accelerated ``too much" 
along the null space. 
Evidently, Anderson may accelerate Newton significantly,
especially in $\hat{W}$ when $\dim N = 1$.
This is demonstrated by the following proposition. We will use 
the notation $(x_{k}+w_{k+1})^{\alpha} := x_k+w_{k+1}-\gamma_{k+1}(x_k-x_{k-1}+w_{k+1}-w_k)$. Similarly, $P_Ne_k^{\alpha} = P_Ne_k-\gamma_{k+1}P_N(e_k-e_{k-1})$, $(T_kP_Re_k)^{\alpha} := T_kP_Re_k - 
\gamma_{k+1}(T_kP_Re_k - T_{k-1}P_Re_{k-1})$, and 
$w_{k+1}^{\alpha} = w_{k+1} - \gamma_{k+1}(w_{k+1}-w_k)$. Here 
$T_k$ denotes a linear map defined in the proof of Proposition \ref{prop:localquad}. We also note that 
$\hat{D}$ is nonsingular if and only if $\hat{D}(x)(\cdot):=P_Nf''(x^*)(P_N(x-x^*),P_N(\cdot))$ is nonsingular for all $x$ 
with $P_N(x-x^*)\neq 0$. 
\begin{proposition}\label{prop:localquad}
Let $f\in C^3$, $\dim N=1$, $\hat{D}$ nonsingular, and $x_k,x_{k-1}\in \hat{W}$ so that 
$x_{k+1} = (x_k+w_{k+1})^{\alpha}$ 
is well-defined. If $P_Nw_{k+1}\neq P_Nw_k$ and $|
1-\gamma_{k+1}|\,\|P_Ne_k\|\neq |\gamma_{k+1}|\,\|P_Ne_{k-1}\|\neq 0$,
then for sufficiently small $\hat{\sigma}>0$ and $\hat{\rho}>0$
there is a constant $C=C(\hat{\sigma},\hat{\rho})$ such that 
\begin{align}\label{eq:quadbound}
\|P_Ne_{k+1}\| \leq C\max\{|1-\hat{\gamma}|,|\hat{\gamma}|\}\max\{\|e_k\|^2,\|e_{k-1}\|^2\},
\end{align}
where $\hat{\gamma}:=(P_Nw_{k+1})^T(P_Nw_{k+1}-P_Nw_k)/\|P_Nw_{k+1}-P_Nw_k\|^2$.
\end{proposition}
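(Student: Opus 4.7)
My plan is to reduce the null-space component of the NA error to a $\gamma_{k+1}$-mixture of two successive singular-Newton iterate errors, and then exploit the fact that $\hat{\gamma}$ is precisely the scalar which, in $\dim N = 1$, annihilates the leading linear-in-$e$ contribution on $N$. Starting from the update formula and projecting $x_{k+1} - x^* = (1-\gamma_{k+1})(x_k + w_{k+1} - x^*) + \gamma_{k+1}(x_{k-1} + w_k - x^*)$ onto $N$, I would obtain
\[
P_N e_{k+1} = (1-\gamma_{k+1})\, P_N e_k^N + \gamma_{k+1}\, P_N e_{k-1}^N,
\]
where $e_j^N := e_j + w_{j+1}$ denotes the Newton-iterate error from $x_j$.

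Next I would invoke the singular-Newton expansion of Decker--Kelley--Keller and Griewank. Under the standing assumptions $f \in C^3$ and $\hat{D}$ nonsingular (i.e., $f$ is $2$-regular at $x^*$), there exist a uniformly bounded linear map $T_j: R \to N$ on $\hat{W}$ (this is the $T_k$ referred to in the statement) and a remainder $\|\rho_j\| \leq c\|e_j\|^2$ such that $P_N w_{j+1} = -\tfrac{1}{2} P_N e_j + T_j P_R e_j + \rho_j$. Rewriting the mixture around the ideal value $\hat{\gamma}$ yields
\[
P_N e_{k+1} = \big[\,(1-\hat{\gamma}) P_N e_k^N + \hat{\gamma}\, P_N e_{k-1}^N\,\big] + (\hat{\gamma} - \gamma_{k+1})\,(P_N e_k^N - P_N e_{k-1}^N).
\]
Because $\dim N = 1$ and $P_N w_{k+1} \neq P_N w_k$, the defining least-squares identity for $\hat{\gamma}$ forces $(1-\hat{\gamma}) P_N w_{k+1} + \hat{\gamma} P_N w_k = 0$, so the bracketed quantity collapses to $(1-\hat{\gamma}) P_N e_k + \hat{\gamma} P_N e_{k-1}$. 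Substituting the singular-Newton expansion into the definition of $\hat{\gamma}$ and simplifying, the linear-in-$e$ terms of this combination cancel, leaving a residue of order $\|e\|^3 / |P_N w_{k+1} - P_N w_k|$. Since $\max\{|1-\hat{\gamma}|, |\hat{\gamma}|\}$ itself scales as $1/|P_N w_{k+1} - P_N w_k|$ as that denominator shrinks, this residue is dominated by $C\max\{|1-\hat{\gamma}|, |\hat{\gamma}|\} \max\{\|e_k\|^2, \|e_{k-1}\|^2\}$.

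The remaining correction $(\hat{\gamma} - \gamma_{k+1})(P_N e_k^N - P_N e_{k-1}^N)$ is controlled by expanding $\gamma_{k+1} - \hat{\gamma}$ via the block decomposition of the inner products defining $\gamma_{k+1}$ into their $N$ and $R$ components, together with $\|P_R w_{j+1}\| = O(\hat{\sigma}\|P_N e_j\|) + O(\|e_j\|^2)$ in $\hat{W}$; this shows $|\gamma_{k+1} - \hat{\gamma}|$ is suppressed by $\hat{\sigma}^2$ and degenerates as $|P_N w_{k+1} - P_N w_k|$ shrinks in the same fashion as $|\hat{\gamma}|$, so this term also fits the target bound. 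The main obstacle I anticipate is verifying the cancellation step in the preceding paragraph: one must show that the cubic residue from $(1-\hat{\gamma}) P_N e_k + \hat{\gamma} P_N e_{k-1}$ divides cleanly by the potentially small denominator $|P_N w_{k+1} - P_N w_k|$ to produce exactly the prefactor $\max\{|1-\hat{\gamma}|, |\hat{\gamma}|\}$ rather than, say, $1 + |\hat{\gamma}|$. The non-degeneracy hypotheses $P_N w_{k+1} \neq P_N w_k$ and $|1-\gamma_{k+1}|\|P_N e_k\| \neq |\gamma_{k+1}|\|P_N e_{k-1}\| \neq 0$ rule out singular divisions and sign-ambiguous cancellations, and sufficient smallness of $\hat{\sigma}$ and $\hat{\rho}$ absorbs the singular-Newton Taylor remainders and range-null coupling into the uniform constant $C = C(\hat{\sigma}, \hat{\rho})$.
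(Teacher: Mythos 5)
Your proposal takes a genuinely different route from the paper's. The paper never estimates $\gamma_{k+1}-\hat{\gamma}$: it uses the least-squares optimality of $\gamma_{k+1}$ directly, plugging the test value $\hat{\gamma}$ into the objective to deduce $\|w_{k+1}^{\alpha}\|\le\|P_Rw_{k+1}-\hat{\gamma}P_R(w_{k+1}-w_k)\|$ (the $N$-block vanishes, the $R$-block is already quadratic in the error), and then passes from $\|P_Nw_{k+1}^{\alpha}\|$ to $\|P_Ne_{k+1}\|$ via the ratio $(1+\mu_{k+1}^e)/(1-\mu_{k+1}^e)$, since $P_Ne_{k+1}$ and $P_Nw_{k+1}^{\alpha}$ differ only in the sign of the leading term $\tfrac12 P_Ne_k^{\alpha}$. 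You instead expand $P_Ne_{k+1}$ as the affine mixture $(1-\gamma_{k+1})P_Ne_k^{\newt}+\gamma_{k+1}P_Ne_{k-1}^{\newt}$, split $\gamma_{k+1}=\hat{\gamma}+(\gamma_{k+1}-\hat{\gamma})$, and bound each piece. Your $\hat{\gamma}$-piece is handled correctly (the identity $(1-\hat{\gamma})P_Nw_{k+1}+\hat{\gamma}P_Nw_k=0$ does reduce it to $(1-\hat{\gamma})P_Ne_k+\hat{\gamma}P_Ne_{k-1}$, whose numerator $-a_{k-1}P_Ne_k+a_kP_Ne_{k-1}$ is quadratic-times-linear once the $\tfrac12 P_Ne_kP_Ne_{k-1}$ terms cancel). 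The step that your sketch leaves vague is the correction term $(\hat{\gamma}-\gamma_{k+1})(P_Ne_k^{\newt}-P_Ne_{k-1}^{\newt})$: a naive size estimate of $|\gamma_{k+1}-\hat{\gamma}|$ by $|a|/B + |\hat{\gamma}|b/B$ (with $A,a,B,b$ the $N$- and $R$-block inner products) does not close without an unjustified lower bound on $|P_N(w_{k+1}-w_k)|$. The fix is to regroup $\gamma_{k+1}-\hat{\gamma}=(a-\hat{\gamma}b)/(B+b)$ and observe that $a-\hat{\gamma}b=\big((1-\hat{\gamma})P_Rw_{k+1}+\hat{\gamma}P_Rw_k\big)^T(P_Rw_{k+1}-P_Rw_k)$, so $|a-\hat{\gamma}b|\le 2\max\{|1-\hat{\gamma}|,|\hat{\gamma}|\}\max_j\|P_Rw_{j+1}\|\,b^{1/2}$; combined with $b^{1/2}B^{1/2}/(B+b)\le\tfrac12$ and $\|P_Ne_k^{\newt}-P_Ne_{k-1}^{\newt}\|\approx B^{1/2}$, this yields the target bound without any lower bound on $B$. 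In short, your decomposition works, but it needs the same ``$\hat{\gamma}$ annihilates the $N$-mixed term, now applied to $P_R w$'' cancellation the paper exploits variationally; the paper's optimality argument packages that cancellation for free, which is why it reads cleaner.
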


\begin{proof}
First note that $w_{k+1}^{\alpha} = 
P_Nw_{k+1}^{\alpha} + P_Rw_{k+1}^{\alpha}$. 
Since $\gamma_{k+1}:=\text{argmin}_{\gamma\in\real} \|w_{k+1}-
\gamma(w_{k+1}-w_k)\|$ by Algorithm \ref{alg:na1}, and $R \perp N$, 
we have that for any $\gamma\in \real$, 
\begin{align}\label{eq:wquadbound}
\|w_{k+1}^{\alpha}\|^2 \leq  \|P_Nw_{k+1}-\gamma(P_Nw_{k+1}-P_Nw_k)\|^2 
+ \|P_Rw_{k+1}-\gamma(P_Rw_{k+1}-P_Rw_k)\|^2. 
\end{align}
Taking $\gamma=\hat{\gamma}$ gives $\|P_Nw_{k+1}-\hat{\gamma}(P_Nw_{k+1}-P_Nw_k)\|^2=0$. 
By Proposition 3.1 in \cite{DaPo23}, we can write 
\begin{align}\label{eq:wexpansion}
w_{k+1}^{\alpha}=-(1/2)P_Ne_k^{\alpha}+\left((T_k-I)P_Re_k\right)^{\alpha}+q_{k-1}^k,
\end{align}
 where 
$T_k(\cdot) := (1/2)\hat{D}(x_k)^{-1}f''(x_k)(e_k,\cdot)$ is a linear 
map whose range lies in $N$ \cite{DaPo23}, and $\|q_{k-1}^k\| \leq
c\max\{|1-\gamma_{k+1}|,|\gamma_{k+1}|\}
\max\{\|e_k\|^2,\|e_{k-1}\|^2\}$ for 
a constant $c$ determined by $f$. Hence 
$\|w_{k+1}^{\alpha}\|\leq \|P_Rw_{k+1}-\hat{\gamma}P_R(w_{k+1}-w_k)\|
\leq c\max\{|1-\hat{\gamma}|,|\hat{\gamma}|\}\max\{\|e_k\|^2,\|e_{k-1}\|^2\}.$
We also have from Proposition 3.1 in \cite{DaPo23} that 
$P_Ne_{k+1} = (1/2)P_Ne_k^{\alpha}+(T_kP_Re_k)^{\alpha}+
q_{k-1}^k$. Let $\mu_{k+1}^e = \|(T_kP_Re_k)^{\alpha}+
q_{k-1}^k\|/\|(1/2)P_Ne_k^{\alpha}\|$. This expansion of 
$P_Ne_{k+1}$ combined with 
Equation \eqref{eq:wexpansion} gives
\begin{align}
\|P_Ne_{k+1}\| \leq \left(\frac{1+\mu_{k_+1}^e}{1-\mu_{k+1}^e}\right)
\|P_Nw_{k+1}^{\alpha}\|\leq C_k\max\{|1-\hat{\gamma}|,|\hat{\gamma}|\}\max\{\|e_k\|^2,\|e_{k-1}\|^2\},
\end{align}
where we define $C_k := c(1+\mu_{k_+1}^e)(1-\mu_{k+1}^e)^{-1}
$. One can show that $\mu_{k+1}^e\to 0$ as $\hat{\sigma}$ and 
$\hat{\rho}$ tend to zero \cite{DaPo23}. Thus for sufficiently small $\hat{\sigma}$ and $\hat{\rho}$ we have 
$C_k \leq C$. This completes the proof.
\end{proof}

Note that $\hat{\gamma}$ can be very large when $P_Nw_{k+1} \approx P_Nw_k$, 
which may correspond to the terminal phase of the solve. So this bound is meaningful in 
the asymptotic regime when there is still a significant decrease in the residual at each step. 
Such a bound is good for a single step, 
but this dramatic acceleration 
of $P_Ne_{k+1}$ could place $x_{k+1}$ outside the 
domain of invertibility $\hat{W}$, i.e., $x_{k+1}$ may 
stray too far from $N$. 
This is where $\gamma$-safeguarding is useful. 
It ensures
that the $\gamma$NA$(r)$ iterates remain within $\hat{W}$
by taking advantage of the way Newton steps 
are attracted to $N$ and scaling NA steps towards
Newton steps when the conditions of Algorithm \ref{alg:gsg-og} 
are met. This is also the key to the convergence proof of 
$\gamma$NA$(r)$ given in \cite{DaPo23}. 
However, given the results of \cite{ReXi23}, if the problem is 
nonsingular one should use Newton's method without 
Anderson, but it is not always obvious a priori if the 
problem at hand is singular or nonsingular. 
In 
the next section, we develop an adaptive version 
of $\gamma$NA$(r)$ 
that enjoys guaranteed local convergence when 
applied to singular problems, but can also detect nonsingular 
problems automatically, at no additional computational 
cost, and ``turn off" NA in response. 
This leads to local quadratic convergence if the problem is 
nonsingular. 

\section{Adaptive $\gamma$-safeguarding}
\label{sec:adapgsg}

It was observed in \cite{DaPo23} that 
$\gamma$NA$(r)$ performed competitively 
with standard NA(1). For example, $\gamma$NA(0.5) 
could outperform NA(1) when
applied to certain nonsingular problems. This is not surprising given 
the results of \cite{ReXi23}, and it had previously been 
observed numerically in \cite{posc20} that 
NA does not necessarily improve convergence when applied to nonsingular problems. 
Thus, by setting the 
$\gamma$-safeguarding parameter $r=0.5$, thereby scaling the 
iterates 
closer to Newton steps, we see less of the effect of a full NA step on the 
order of convergence. 

Even if it is known that the problem is nonsingular, one 
may still wish to use NA to take advantage of the larger 
domain of convergence \cite{posc20}.
In such a scenario one could set $r$ close to zero, but this means 
the effect of NA is 
never completely eliminated which could lead to a smaller order of convergence 
relative to Newton for nonsingular problems. 
This, and the fact that often it is not known a priori 
if the problem is singular or nonsingular, motivates the development of an 
adaptive form of $\gamma$NA($r$) that can automatically
detect nonsingular problems, and scale an NA step accordingly,
without sacrificing local convergence and 
acceleration for singular 
problems. We will denote this 
adaptive choice of $r$ from Algorithm \ref{alg:gsg-og} by $r_{k+1}$, with $k$ denoting the iteration count. 
There are three criteria that the choice of $r_{k+1}$ 
should satisfy, which we record in Criteria \ref{criteria}.

\begin{criteria}\label{criteria}
An adaptive $\gamma$-safeguarding tolerance $r_{k+1}$ should satisfy the following.
\begin{enumerate}[ref=\theenumi{}]
\item $r_{k+1} {\ll} 1$ if $\|P_Ne_k\|/\|P_Ne_{k-1}\| {\ll} 1$;
\label{crit1}
\item $r_{k+1} \approx 1$ if $\|P_Ne_k\|/\|P_Ne_{k-1}\| \approx 1$; and 
\label{crit2}
\item $\lim_{k\to\infty} r_{k+1} = 0$ if $f'(x^*)$ is nonsingular.
\label{crit3}
\end{enumerate}
\end{criteria}

Criterion 3.1.\ref{crit1} says that if $\|P_Ne_k\|/\|P_Ne_{k-1}\|$ is very small, then 
we want to scale the NA step generated from $x_k$ and $x_{k-1}$ heavily 
towards $x_k+w_{k+1}$. In the singular case, this will (locally) keep $x_{k+1}$ 
within the domain of invertibility. Alternatively, if the problem is nonsingular, but close to a singular problem,
then scaling NA towards a Newton step is also preferred when
$\|P_Ne_k\|/\|P_Ne_{k-1}\|$  is small since then we do not
slow Newton's fast local quadratic convergence. 
Criterion 3.1.\ref{crit2} says 
that if $\|P_Ne_k\|/\|P_Ne_{k-1}\|$
is close to one, then the error is not decreasing 
significantly, and we want to allow 
NA to act on $x_k$ and $x_{k-1}$ without significant scaling of $\gamma_{k+1}$ from safeguarding.
Lastly, Criterion 3.1.\ref{crit3} is important because if $f'(x^*)$ is nonsingular, 
Newton's method will converge quadratically in a neighborhood of $x^*$. We therefore 
want to ``turn off" NA near $x^*$, and insisting that 
$r_{k+1} \to 0$ asymptotically achieves this. 

Observing Criteria 3.1.\ref{crit1}-\ref{crit3}, one may note that we essentially want $r_{k+1}$ to 
behave like 
$\|P_Ne_k\|/\|P_Ne_{k-1}\|$ within the domain of convergence $\hat{W}$.
Of course, we can not compute $\|P_Ne_k\|/\|P_Ne_{k-1}\|$, but 
Equation \eqref{eq:wexpansion} says that in $\hat{W}$,
$w_{k+1}\approx P_Ne_k$. So 
if we take $r_{k+1} = \|w_{k+1}\|/\|w_k\|$, we can expect 
Criteria 3.1.\ref{crit1}-\ref{crit2} to 
be enforced locally. For Criterion 3.1.\ref{crit3}, if $f'(x^*)$ is nonsingular, then locally we will
have $\|w_{k+1}\|/\|w_k\| \to 0$. Thus $r_{k+1}=\|w_{k+1}\|/\|w_k\|$ satisfies the 
three criteria within the domain of convergence. With 
this choice of $r_{k+1}$, we have adaptive $\gamma$-safeguarding and $\gna$. 

\begin{algorithm}[H] \caption{Adaptive $\gamma$-safeguarding} \label{alg:adapgsg}
\begin{algorithmic}[1]
\STATE {Given $x_k$, $x_{k-1}$, $w_{k+1}$, $w_k$, $\gamma_{k+1}$, and $\hat{r}\in (0,1)$, set 
$\eta_{k+1} = \|w_{k+1}\|/\|w_k\|$, $r_{k+1} = \min\{\eta_{k+1},\hat{r}\}$, and $\lambda^a=1$.}
\STATE {$\beta_{k+1} \gets r_{k+1}\eta_{k+1}$}
\IF{$\gamma_{k+1}=0$ {\bf or } $\gamma_{k+1}\geq 1$}
\STATE $\lambda^a \gets 0$
\ELSIF {$|\gamma_{k+1}|/|1-\gamma_{k+1}|>\beta_{k+1}$} 
\STATE $\lambda^a \gets \dfrac{\beta_{k+1}}{\gamma_{k+1}\left(\beta_{k+1}+\sign(\gamma_{k+1})\right)}$
\ENDIF 
\STATE $x_{k+1}\gets  x_k+w_{k+1}-\lambda^a\gamma_{k+1}(x_k-x_{k-1}+w_{k+1}-w_k)$
\end{algorithmic}
\end{algorithm}

\begin{algorithm}[H]
\begin{algorithmic}[1]
\caption{Adaptive $\gamma$-Safeguarded Newton-Anderson 
$\left(\gna)\right)$}
\label{alg:adapgsgna}
\STATE{Choose $x_0\in\mathbb{R}^n$ and $\hat{r}\in(0,1)$. Set $w_1=-f'(x_0)^{-1}f(x_0)$ and $x_1=x_0+w_1$}
\FOR{k=1,2,...}
\STATE {$w_{k+1}\gets -f'(x_k)^{-1}f(x_k)$}
\STATE {$\gamma_{k+1}\gets (w_{k+1}-w_k)^Tw_{k+1}/\|w_{k+1}-w_k\|_2^2$}
\STATE {$\eta_{k+1} \gets \|w_{k+1}\|/\|w_k\|$}
\STATE {$r_{k+1} \gets \min\{\eta_{k+1},\hat{r}\}$}
\STATE {$\beta_{k+1} \gets r_{k+1}\eta_{k+1}$}
\STATE $\lambda^a \gets 1$
\IF{$\gamma_{k+1}=0$ {\bf or } $\gamma_{k+1}\geq 1$}
\STATE $\lambda^a \gets 0$
\ELSIF {$|\gamma_{k+1}|/|1-\gamma_{k+1}|>\beta_{k+1}$} 
\STATE $\lambda^a \gets \dfrac{\beta_{k+1}}{\gamma_{k+1}\left(\beta_{k+1}+\sign(\gamma_{k+1})\right)}$
\ENDIF 
\STATE $x_{k+1}\gets  x_k+w_{k+1}-\lambda^a\gamma_{k+1}(x_k-x_{k-1}+w_{k+1}-w_k)$
\ENDFOR 
\end{algorithmic}
\end{algorithm}

Adaptive 
$\gamma$-safeguarding differs from 
Algorithm \ref{alg:gsg-og} only in line 2. In Algorithm \ref{alg:gsg-og}, $\beta_{k+1} = 
r\eta_{k+1}$ whereas $\beta_{k+1}=r_{k+1}\eta_{k+1}$ in Algorithm 
\ref{alg:adapgsg} with $r_{k+1} = \min\{\eta_{k+1},\hat{r}\}$. 
This one 
change can have a significant impact on convergence as demonstrated 
in Section \ref{sec:numerics}, and can enable locally quadratic 
convergence when applied to nonsingular problems (see Corollary 
\ref{cor:localquad}). Similar to $\gamma$NA$(r)$, 
one implements $\gna$ by replacing line 5 in 
Algorithm \ref{alg:na1} with Algorithm \ref{alg:adapgsg} 
and setting $\hat{r}$ at the start of the solve. The 
result is Algorithm \ref{alg:adapgsgna}.
The choice of $\hat{r}$ here sets the weakest safeguarding the user wants to impose. 
Thus we are always safeguarding at least as strictly as standard $\gamma$-safeguarding 
with $r=\hat{r}$. Stated concisely, we have $r_{k+1}\leq \hat{r}$.
Local convergence of $\gna$ then follows from 
Theorem $6.1$ in \cite{DaPo23}. To state this precisely, let $\lambda_{k+1}$ 
be the value of $\lambda^a$ computed by Algorithm \ref{alg:adapgsg} at step $k$, 
$\theta_{k+1}^{\lambda}=\|w_{k+1}-\lambda_{k+1}\gamma_{k+1}(w_{k+1}-w_k)\|/\|w_{k+1}\|$, $x_{k+1}^{NA} := x_k + w_{k+1} - \lambda_{k+1}\gamma_{k+1}(x_k-x_{k-1}+w_{k+1}-w_k)$, and 
$\sigma_k := \|P_Re_k\|/\|P_Ne_k\|$.
Then we have the following theorem.

\begin{theorem}\label{thm:convergence-chap3}
Let $\dim N=1$, and let $\hat{D}$ be invertible as a map on $N$.
Let $W_k:=B_{\|e_k\|}(x^*)\cap 
\{x : \|P_R(x-x^*)\|<\sigma_k\|P_N(x-x^*)\|\}$. If $x_0$ is 
chosen so that $\sigma_0<\hat{\sigma}$ and $\|e_0\|<\hat{\rho}$, for 
sufficiently small $\hat{\sigma}$ and $\hat{\rho}$, $x_1=x_0+w_1$, and 
$x_{k+1}=x_{k+1}^{NA}$ for $k\geq 1$, 
then $W_{k+1}\subset W_0$ for all $k\geq 0$ and $x_k\to x^*$. 
That is, 
$\set{x_k}$ remains well-defined and converges to $x^*$. Furthermore, 
there exist constants $C>0$ and $\kappa\in(1/2,1)$ such that
\begin{align}
\|P_Re_{k+1}\|&\leq
C\max\set{|1-\lambda_{k+1}\gamma_{k+1}|\,\|e_k\|^2,
|\lambda_{k+1}\gamma_{k+1}|\,\|e_{k-1}\|^2}\label{range-bd-main-chap3} \\
\|P_Ne_{k+1}\|&\leq \kappa\theta_{k+1}^{\lambda}\|P_Ne_{k}\| \label{null-rate-of-convergence-main-chap3}
\end{align}
for all $k\geq 1$. 
\end{theorem}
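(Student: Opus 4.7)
The plan is to reduce Theorem \ref{thm:convergence-chap3} directly to Theorem 6.1 of \cite{DaPo23}, which establishes the analogous statement for non-adaptive $\gamma$-safeguarding with a fixed parameter $r\in(0,1)$. The key observation is that adaptive $\gamma$-safeguarding differs from the non-adaptive version only in how $\beta_{k+1}$ is defined, and by construction $r_{k+1}=\min\{\eta_{k+1},\hat{r}\}\leq \hat{r}$, so the adaptive threshold $\beta_{k+1}^{\mathrm{adap}}=r_{k+1}\eta_{k+1}$ is bounded above by the fixed threshold $\beta_{k+1}^{\mathrm{fix}}=\hat{r}\eta_{k+1}$ used in $\gamma\mathrm{NA}(\hat{r})$. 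Since a smaller $\beta_{k+1}$ only forces the safeguarding elsif-branch of Algorithm \ref{alg:adapgsg} to trigger on a larger set of $\gamma_{k+1}$ values and, when triggered, produces a strictly smaller $|\lambda^a\gamma_{k+1}|$, the adaptive iterate is always at least as close to the Newton step $x_k+w_{k+1}$ as the non-adaptive iterate generated with $r=\hat{r}$.

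First I would verify that the scalar $\lambda_{k+1}\gamma_{k+1}$ produced by Algorithm \ref{alg:adapgsg} satisfies the structural inequality $|\lambda_{k+1}\gamma_{k+1}|\leq \beta_{k+1}=r_{k+1}\eta_{k+1}$ that drives the entire argument of Theorem 6.1 in \cite{DaPo23}. This is an immediate computation on the three branches: $\lambda^a=1$ forces $|\gamma_{k+1}|\leq \beta_{k+1}/|1-\gamma_{k+1}|\cdot|1-\gamma_{k+1}|\leq\beta_{k+1}$, $\lambda^a=0$ is trivial, and the explicit formula in the elsif-branch gives $|\lambda^a\gamma_{k+1}|=\beta_{k+1}/(\beta_{k+1}+\sign(\gamma_{k+1}))\leq \beta_{k+1}$.

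Next I would run the induction of \cite{DaPo23}: assume $x_j\in W_j\subset W_0$ for all $j\leq k$, and that \eqref{range-bd-main-chap3}--\eqref{null-rate-of-convergence-main-chap3} hold for all $j<k$. Applying the expansion in \eqref{eq:wexpansion} (with $\gamma_{k+1}$ replaced by $\lambda_{k+1}\gamma_{k+1}$) together with the companion expansion of $P_Ne_{k+1}$ from Proposition 3.1 of \cite{DaPo23} yields
\begin{equation*}
\|P_Re_{k+1}\| \leq C\max\{|1-\lambda_{k+1}\gamma_{k+1}|\,\|e_k\|^2,\,|\lambda_{k+1}\gamma_{k+1}|\,\|e_{k-1}\|^2\},
\end{equation*}
which is precisely \eqref{range-bd-main-chap3}, and $\|P_Ne_{k+1}\|\leq \kappa\theta_{k+1}^{\lambda}\|P_Ne_k\|$ for some $\kappa\in(1/2,1)$ once $\hat{\sigma}$ and $\hat{\rho}$ are chosen small enough to absorb the higher-order terms in $\mu_{k+1}^e$. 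Combining these, $\sigma_{k+1}<\sigma_k$ and $\|e_{k+1}\|<\|e_k\|$, so $W_{k+1}\subset W_k\subset W_0$, closing the induction and ensuring $x_k\to x^*$.

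The main obstacle is the regime $\eta_{k+1}\ll \hat{r}$, in which $r_{k+1}=\eta_{k+1}$ and $\beta_{k+1}=\eta_{k+1}^2$ may be orders of magnitude smaller than in the non-adaptive case. One must confirm two things: first, that $\theta_{k+1}^{\lambda}$ does not degrade, which follows because $\lambda^a\in[0,1]$ can only shrink the NA correction, so $\theta_{k+1}^{\lambda}\leq 1$ and in fact still approaches the Newton ratio of $1/2$; and second, that the range-space bound \eqref{range-bd-main-chap3} is not weakened when $|\lambda_{k+1}\gamma_{k+1}|$ becomes small, which is clear because the right-hand side is monotone in $|\lambda_{k+1}\gamma_{k+1}|$, so stricter safeguarding only sharpens \eqref{range-bd-main-chap3}. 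These two observations together guarantee the same constants $C$ and $\kappa$ as in \cite{DaPo23}, completing the reduction.
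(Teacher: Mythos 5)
Your proposal takes essentially the same route as the paper, which establishes Theorem \ref{thm:convergence-chap3} precisely by observing that $r_{k+1}=\min\{\eta_{k+1},\hat r\}\leq\hat r$, so that adaptive $\gamma$-safeguarding is at least as strict as the fixed-parameter scheme and Theorem 6.1 of \cite{DaPo23} applies directly; your added verification that stricter safeguarding only sharpens \eqref{range-bd-main-chap3} and leaves $\theta_{k+1}^{\lambda}\leq 1$ is exactly the point that makes the reduction work. The one quibble is your claim that $|\lambda_{k+1}\gamma_{k+1}|\leq\beta_{k+1}$ in the first and third branches, which fails when $\gamma_{k+1}<0$ (since then $|1-\gamma_{k+1}|>1$); the correct uniform bound is $\beta_{k+1}/(1-\beta_{k+1})$ as recorded in Lemma \ref{lem:gammabound}, and this does not change the argument.
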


Under the assumptions of Theorem \ref{thm:convergence-chap3}, Griewank's Theorem \cite[Theorem 6.1]{Gr80} says that Newton's method almost surely leads into the domain of 
convergence $\hat{W}$ provided $x_0$ is sufficiently close to $x^*$. This effectively means that if the sequence $x_k$ generated by $\gna$ approaches $x^*$ we will have almost sure convergence eventually, and this convergence will be faster than Newton. The precise improvement is determined by the asymptotic behavior of $\theta_{k+1}^{\lambda}$. Globalization techniques such as linesearch methods may be used to bring the $\gna$ iterates 
closer to $x^*$. In particular, $\gna$ with an Armijo linesearch was shown to be effective in \cite{DaPo23}. 

Our choice of $r_{k+1}$ in Algorithm \ref{alg:adapgsg} is partially motivated by Criterion 3.1.\ref{crit3}.
That is, in the case of a nonsingular 
problem, we prefer to use Newton asymptotically rather 
than NA. Hence we want $r_{k+1}$ to tend to 
zero as our solver converges, thereby scaling the 
$\gna$ iterates heavily towards pure Newton steps in the 
asymptotic regime and enjoying quadratic 
convergence locally. The remainder of 
this section is dedicated to quantifying how close 
a $\gna$ iterate 
is to a standard Newton 
iterate in the asymptotic regime when $f'(x^*)$ 
is nonsingular. 
The main result is 
Theorem \ref{thm:nonsing-case}
below which bounds $\|x_{k+1}^{\NA}-x_{k+1}^{\newt}\|$ locally, where 
$x_{k+1}^{\newt} := x_k+w_{k+1}$, and $x_{k+1}^{\NA} := 
x_{k+1}^{\newt}-\lambda_{k+1}\gamma_{k+1}(x_{k+1}^{\newt}-x_k^{\newt})$. This notation is introduced to 
emphasize the Newton and Newton-Anderson iterates in the 
comparison. 
We will also define 
$e_k^{\newt} := x_k^{\newt} - x^* = x_{k-1}+w_k - x^*$. 
The following lemma will be used in the proof of 
Theorem \ref{thm:nonsing-case}. 
Lemma \ref{lem:gammabound} 
bounds $|\lambda_{k+1}\gamma_{k+1}|$, the scaled
$\gamma_{k+1}$ returned by $\gna$ at iteration $k$, in terms of 
$\eta_{k+1}$ and $r_{k+1} = \min\{\eta_{k+1},\hat{r}\}$. 
The proof consists of walking through 
the cases in Algorithm \ref{alg:adapgsg}, and is therefore 
left to the interested reader.

\begin{lemma}\label{lem:gammabound}
Let $\eta_{k+1} = \|w_{k+1}\|/\|w_k\|$ and $\hat{r}\in (0,1)$.
Define 
$r_{k+1} := \min\{\eta_{k+1},\hat{r}\}$ and
$\beta_{k+1} := r_{k+1}\eta_{k+1}$ as in 
Algorithm \ref{alg:adapgsg}.
Let $\lambda_{k+1}$ be the value computed by 
Algorithm \ref{alg:adapgsg} at iteration $k$.  If $\eta_{k+1} < 1$, then $\lambda_{k+1}\gamma_{k+1}$ returned by 
Algorithm \ref{alg:adapgsg} satisfies
\begin{align}\label{eq:gammabound1}
|\lambda_{k+1}\gamma_{k+1}| \leq \frac{\beta_{k+1}}
{1-\beta_{k+1}}.
\end{align}
when $\lambda_{k+1} = 1$, and 
\begin{align}\label{eq:gammabound2}
|\lambda_{k+1}\gamma_{k+1}| = \frac{\beta_{k+1}}
{1+\sign(\gamma_{k+1})\beta_{k+1}}.
\end{align}
when $\lambda_{k+1} < 1$.
\end{lemma}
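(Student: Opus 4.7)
The plan is to prove Lemma \ref{lem:gammabound} by straightforward case analysis on the branches of Algorithm \ref{alg:adapgsg}, using the assumption $\eta_{k+1}<1$ only to guarantee that the denominators appearing in the claimed bounds are positive. Since $r_{k+1}=\min\{\eta_{k+1},\hat{r}\}\le \eta_{k+1}<1$, we have $\beta_{k+1}=r_{k+1}\eta_{k+1}<\eta_{k+1}<1$, so both $1-\beta_{k+1}$ and $1+\sign(\gamma_{k+1})\beta_{k+1}$ are strictly positive in what follows.

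First I would dispense with the branch in line 3--4 of Algorithm \ref{alg:adapgsg}, where $\gamma_{k+1}=0$ or $\gamma_{k+1}\ge 1$ forces $\lambda^a=0$. Here $\lambda_{k+1}\gamma_{k+1}=0$, and the claimed inequality \eqref{eq:gammabound1} holds trivially (with room to spare), while this case is excluded from the regime $\lambda_{k+1}<1$. The remaining analysis therefore assumes $\gamma_{k+1}\ne 0$ and $\gamma_{k+1}<1$, so $|1-\gamma_{k+1}|=1-\gamma_{k+1}$.

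Next, in the branch where the \textbf{else if} test fails, i.e.\ $|\gamma_{k+1}|/|1-\gamma_{k+1}|\le \beta_{k+1}$, Algorithm \ref{alg:adapgsg} returns $\lambda^a=1$, so $|\lambda_{k+1}\gamma_{k+1}|=|\gamma_{k+1}|$. To prove \eqref{eq:gammabound1} I would split by the sign of $\gamma_{k+1}$: for $\gamma_{k+1}>0$ the condition rearranges to $\gamma_{k+1}\le \beta_{k+1}/(1+\beta_{k+1})$, and for $\gamma_{k+1}<0$ it rearranges to $|\gamma_{k+1}|\le \beta_{k+1}/(1-\beta_{k+1})$. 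The larger of the two bounds is $\beta_{k+1}/(1-\beta_{k+1})$, which gives \eqref{eq:gammabound1}.

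Finally, in the branch where the \textbf{else if} test triggers, Algorithm \ref{alg:adapgsg} sets $\lambda^a=\beta_{k+1}/\bigl[\gamma_{k+1}(\beta_{k+1}+\sign(\gamma_{k+1}))\bigr]$, so direct multiplication gives
\begin{equation*}
\lambda_{k+1}\gamma_{k+1}=\frac{\beta_{k+1}}{\beta_{k+1}+\sign(\gamma_{k+1})}.
\end{equation*}
Taking absolute values and using that $1+\sign(\gamma_{k+1})\beta_{k+1}>0$ yields \eqref{eq:gammabound2}. There is no genuine obstacle here, only bookkeeping around the sign of $\gamma_{k+1}$; the main thing to check carefully is that $\eta_{k+1}<1$ is exactly what is needed to keep the denominators in \eqref{eq:gammabound1}--\eqref{eq:gammabound2} bounded away from zero, since otherwise $\beta_{k+1}$ could equal or exceed $1$ and the bounds would be vacuous.
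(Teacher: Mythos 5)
Your proof is correct and is exactly the case walkthrough that the paper declares ``left to the interested reader,'' with the hypothesis $\eta_{k+1}<1$ invoked precisely where it matters, namely to force $\beta_{k+1}<1$ so the denominators in \eqref{eq:gammabound1}--\eqref{eq:gammabound2} are positive. Your handling of the degenerate branch $\gamma_{k+1}=0$ or $\gamma_{k+1}\ge 1$ (which yields $\lambda_{k+1}=0$ and hence would technically falsify the equality \eqref{eq:gammabound2} as literally stated) is the right reading of a small imprecision in the lemma itself, not a gap in your argument.
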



With Lemma \ref{lem:gammabound}, we can bound 
$\|x_{k+1}^{\NA}-x_{k+1}^{\newt}\|$ in terms of $\|e_k\|$, 
$\|e_{k-1}\|$, and $\eta_{k+1}$. 

\begin{theorem}\label{thm:nonsing-case}
If $f'(x^*)$ is nonsingular, then there exists a 
$\rho>0$ and a constant $C$ depending only on $f$ 
such that for $x_k$ and
$x_{k-1}$ in $B_{\rho}(x^*)$ and $\eta_{k+1} < 1$, 

\begin{align}
\|x_{k+1}^{\NA}-x_{k+1}^{\newt}\| \leq C\left(\frac{\beta_{k+1}}{1 - \beta_{k+1}}\right)\max\{\|e_k\|^2,\|e_{k-1}\|^2\}
\end{align}
when $\lambda_{k+1}=1$, and 
\begin{align}
\|x_{k+1}^{\NA}-x_{k+1}^{\newt}\| \leq C\left(\frac{\beta_{k+1}}{1 +\sign(\gamma_{k+1})\beta_{k+1}}\right)\max\{\|e_k\|^2,\|e_{k-1}\|^2\}
\end{align}
when $\lambda_{k+1} < 1$. 
\end{theorem}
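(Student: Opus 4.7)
The plan is to reduce the estimate to a product of two quantities that can be bounded separately. Observe directly from the definitions that
\[
x_{k+1}^{\NA} - x_{k+1}^{\newt} = -\lambda_{k+1}\gamma_{k+1}\bigl(x_{k+1}^{\newt} - x_k^{\newt}\bigr),
\]
so that
\[
\|x_{k+1}^{\NA} - x_{k+1}^{\newt}\| = |\lambda_{k+1}\gamma_{k+1}|\cdot\|x_{k+1}^{\newt} - x_k^{\newt}\|.
\]
The proof therefore splits into controlling the scalar factor and the Newton-step increment independently, and the case split in the statement ($\lambda_{k+1}=1$ vs.\ $\lambda_{k+1}<1$) will be inherited entirely from the scalar side.

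For the scalar factor I would simply invoke Lemma \ref{lem:gammabound}. Under the hypothesis $\eta_{k+1}<1$, it produces exactly the two coefficients $\beta_{k+1}/(1-\beta_{k+1})$ and $\beta_{k+1}/(1+\sign(\gamma_{k+1})\beta_{k+1})$ that appear as the leading factors in the stated bounds. Hence no additional work is required on the $\gamma$-side.

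For the Newton-step increment I would compare both Newton iterates to $x^*$. Writing $x_{k+1}^{\newt} - x_k^{\newt} = e_{k+1}^{\newt} - e_k^{\newt}$ and applying the triangle inequality reduces the problem to bounding $\|e_{k+1}^{\newt}\|$ and $\|e_k^{\newt}\|$. Since $f'(x^*)$ is nonsingular and $f\in C^3$, the classical Newton--Kantorovich estimate applies pointwise: there exist $\rho>0$ and $C_N>0$ depending only on $f$ (through a local Lipschitz constant for $f'$ and through $\|f'(x^*)^{-1}\|$) such that for every $y\in B_{\rho}(x^*)$,
\[
\bigl\|\bigl(y - f'(y)^{-1}f(y)\bigr) - x^*\bigr\| \leq C_N\|y - x^*\|^2.
\]
Applying this with $y=x_k$ and $y=x_{k-1}$, both of which lie in $B_{\rho}(x^*)$ by hypothesis, gives $\|e_{k+1}^{\newt}\|\leq C_N\|e_k\|^2$ and $\|e_k^{\newt}\|\leq C_N\|e_{k-1}\|^2$, hence
\[
\|x_{k+1}^{\newt} - x_k^{\newt}\| \leq 2C_N\max\{\|e_k\|^2,\|e_{k-1}\|^2\}.
\]
Combining with the Lemma \ref{lem:gammabound} bounds and setting $C=2C_N$ gives the theorem.

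The only subtle point I anticipate is that $x_k$ is an adaptive Newton--Anderson iterate, not a pure Newton iterate, so one cannot invoke an iteration-based Newton convergence theorem along the sequence. This is precisely why the Newton--Kantorovich estimate must be used in its pointwise, single-step form: its bound on one Newton step from $y$ depends only on $y\in B_{\rho}(x^*)$ and not on how $y$ was produced. Choosing $\rho$ small enough so the estimate holds is standard and requires only that $f'$ remain boundedly invertible on $B_{\rho}(x^*)$ with its Lipschitz constant controlling the quadratic remainder, which is automatic in a sufficiently small neighborhood of a nonsingular root.
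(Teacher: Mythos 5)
Your proof is correct and follows essentially the same route as the paper's: the same factorization $\|x_{k+1}^{\NA}-x_{k+1}^{\newt}\| = |\lambda_{k+1}\gamma_{k+1}|\,\|x_{k+1}^{\newt}-x_k^{\newt}\|$, the same application of Lemma~\ref{lem:gammabound} to the scalar factor, and the same triangle-inequality plus pointwise quadratic Newton estimate for the increment. Your explicit remark that the quadratic bound must be applied in single-step form because $x_k$ is an Anderson iterate (not a Newton iterate) is a useful clarification of a point the paper leaves implicit.
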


\begin{proof}
Using our notation from the discussion preceding 
Lemma \ref{lem:gammabound}, an iterate generated by 
$\gna$ takes the form $x_{k+1}^{\NA} = x_{k+1}^{\newt} -
\lambda_{k+1}\gamma_{k+1}\left(x_{k+1}^{\newt}-
x_k^{\newt}\right)$. Therefore 
$\|x_{k+1}^{\NA} - x_{k+1}^{\newt}\| = |\lambda_{k+1}
\gamma_{k+1}|\,\|x_{k+1}^{\newt}-x_k^{\newt}\|$.
Since $f'(x^*)$ is nonsingular, we can take $\rho$ 
sufficiently small to ensure that $\|e_{j+1}^{\newt}\|\leq 
C\|e_j\|^2$, where $C$ is a constant determined by 
$f$, when $\|e_j\| < \rho$ for $j=k,k-1$. Hence, upon 
adding and subtracting $x^*$ we obtain 
$\|x_{k+1}^{\newt}-x_{k}^{\newt}\|=\|e_{k+1}^{\newt}
-e_{k}^{\newt}\|\leq 2C\max\{\|e_k\|^2,\|e_{k-1}\|^2\}$.
To complete the proof, we write $2C = C$ 
and apply Lemma \ref{lem:gammabound} to bound 
$|\lambda_{k+1}\gamma_{k+1}|$ for the cases 
$\lambda_{k+1}=1$ and $\lambda_{k+1} < 1$.
\end{proof}

We conclude this section with Corollary \ref{cor:localquad}, 
which proves that $\gna$ can recover local quadratic convergence 
from NA when applied to nonsingular problems.

\begin{corollary}\label{cor:localquad}
    If $f'(x^*)$ is nonsingular, $\|e_{k}\| < \|e_{k-1}\|$, and $\eta_{k+1} < \hat{r}$, then there exists a $\rho>0$ and constants $C_1$ and $C_2$ depending only on $f$ such that 
    \begin{align}
        \|e_{k+1}\|\leq \left(\frac{C_1}{1-\hat{r}^2}+C_2\right)\|e_k\|^2
    \end{align}
    for $x_k,x_{k-1}\in B_{\rho}(x^*)$.
\end{corollary}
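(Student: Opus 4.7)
My plan is to build on Theorem~\ref{thm:nonsing-case} by comparing the $\gna$ iterate $x_{k+1}^{\NA}$ to the pure Newton iterate $x_{k+1}^{\newt}$ and then invoking Newton's local quadratic convergence to control the remainder. The triangle inequality gives
\begin{equation*}
\|e_{k+1}\|\;\le\;\|x_{k+1}^{\NA}-x_{k+1}^{\newt}\|\;+\;\|e_{k+1}^{\newt}\|.
\end{equation*}
Since $f'(x^*)$ is nonsingular, I may shrink $\rho$ so that the standard Newton--Kantorovich bound yields $\|e_{k+1}^{\newt}\|\le C_2\|e_k\|^2$ for a constant $C_2$ depending only on $f$. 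This handles the second summand.

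For the first summand I apply Theorem~\ref{thm:nonsing-case}. In both cases ($\lambda_{k+1}=1$ and $\lambda_{k+1}<1$) the prefactor is bounded by $\beta_{k+1}/(1-\beta_{k+1})$, since for $\lambda_{k+1}<1$ and $\sign(\gamma_{k+1})=+1$ the prefactor $\beta_{k+1}/(1+\beta_{k+1})$ is even smaller, while $\sign(\gamma_{k+1})=-1$ produces exactly $\beta_{k+1}/(1-\beta_{k+1})$. The hypothesis $\eta_{k+1}<\hat r$ gives $r_{k+1}=\min\{\eta_{k+1},\hat r\}=\eta_{k+1}$, so $\beta_{k+1}=\eta_{k+1}^2<\hat r^2<1$. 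Consequently
\begin{equation*}
\frac{\beta_{k+1}}{1-\beta_{k+1}}\;\le\;\frac{\eta_{k+1}^2}{1-\hat r^2}.
\end{equation*}
Combining this with Theorem~\ref{thm:nonsing-case}, and using $\|e_k\|<\|e_{k-1}\|$ so that $\max\{\|e_k\|^2,\|e_{k-1}\|^2\}=\|e_{k-1}\|^2$, produces a bound of the form
\begin{equation*}
\|x_{k+1}^{\NA}-x_{k+1}^{\newt}\|\;\le\;\frac{C}{1-\hat r^2}\,\eta_{k+1}^2\|e_{k-1}\|^2.
\end{equation*}

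The main obstacle is then converting $\eta_{k+1}^2\|e_{k-1}\|^2$ into a clean multiple of $\|e_k\|^2$. I would use that in the nonsingular regime $w_{j+1}=-f'(x_j)^{-1}f(x_j)=-e_j+O(\|e_j\|^2)$, by expanding $f(x_j)=f'(x^*)e_j+O(\|e_j\|^2)$ and writing $f'(x_j)^{-1}f'(x^*)=I+O(\|e_j\|)$. Thus for $\rho$ small enough,
\begin{equation*}
(1-L\rho)\|e_j\|\;\le\;\|w_{j+1}\|\;\le\;(1+L\rho)\|e_j\|,\qquad j=k-1,k,
\end{equation*}
for some $L$ depending only on $f$, giving $\eta_{k+1}\|e_{k-1}\|\le \frac{1+L\rho}{1-L\rho}\|e_k\|$. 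Squaring and absorbing the factor $\left(\tfrac{1+L\rho}{1-L\rho}\right)^2$ into the constant $C_1$ yields $\eta_{k+1}^2\|e_{k-1}\|^2\le C_1'\|e_k\|^2$ with $C_1'$ depending only on $f$ (once $\rho$ is fixed).

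Putting these pieces together gives
\begin{equation*}
\|e_{k+1}\|\;\le\;\frac{C_1}{1-\hat r^2}\|e_k\|^2 + C_2\|e_k\|^2\;=\;\left(\frac{C_1}{1-\hat r^2}+C_2\right)\|e_k\|^2,
\end{equation*}
which is the desired estimate. The only subtle point is that $\rho$ must be chosen small enough to simultaneously (i) place us in the domain where Newton converges quadratically with constant $C_2$, (ii) validate the hypotheses of Theorem~\ref{thm:nonsing-case}, and (iii) make the factor relating $\eta_{k+1}\|e_{k-1}\|$ to $\|e_k\|$ bounded; since each of these is a local requirement depending only on $f$, taking the minimum of the three radii suffices.
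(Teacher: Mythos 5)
Your proposal is correct and follows essentially the same route as the paper's proof: the same triangle-inequality split through $x_{k+1}^{\newt}$, the same appeal to Theorem~\ref{thm:nonsing-case} with $\beta_{k+1}=\eta_{k+1}^2$ and the bound $1-\beta_{k+1}\ge 1-\hat r^2$, and the same Taylor-expansion estimate $\eta_{k+1}\le C_1\|e_k\|/\|e_{k-1}\|$ to convert $\beta_{k+1}\|e_{k-1}\|^2$ into a multiple of $\|e_k\|^2$. You simply fill in details the paper leaves implicit (the case analysis of the prefactor and the derivation of the $\eta_{k+1}$ bound), which is fine.
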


\begin{proof}
    Adding and subtracting $e_{k+1}^{\newt}$ to 
    $e_{k+1}$ gives 
        $\|e_{k+1}\|\leq \|x_{k+1}^{\NA} - x_{k+1}^{\newt}\| + \|e_{k+1}^{\newt}\|.$
    By Theorem \ref{thm:nonsing-case}, $\|x_{k+1}^{\NA}-x_{k+1}^{\newt}\|\leq C\beta_{k+1}(1-\beta_{k+1})^{-1}\max\left\{\|e_k\|^2,\|e_{k-1}\|^2\right\}$, and
    for $x_k\in B_{\rho}(x^*)$, $\|e_{k+1}^{\newt}\|\leq C_2\|e_k\|^2$. Since $\eta_{k+1}<\hat{r}$, we have $\beta_{k+1} = \eta_{k+1}^2$. Moreover, when $f'(x^*)$ is nonsingular, Taylor expansion shows that
    \begin{align}
        \eta_{k+1} \leq C_1 \frac{\|e_k\|}{\|e_{k-1}\|}
    \end{align}
    for $x_k,x_{k-1}\in B_{\rho}(x^*)$. Thus $\beta_{k+1}\max\left\{\|e_k\|^2,\|e_{k-1}\|^2\right\} \leq C_1 \|e_k\|^2$, and therefore
    \begin{align}
        \|e_{k+1}\|\leq \left(\frac{C_1}{1-\hat{r}^2}+C_2\right)\|e_k\|^2.
    \end{align}
\end{proof}

\section{Numerics}
\label{sec:numerics}

In this section, we demonstrate the effectiveness of NA and $\gna$ near bifurcation points by applying 
these algorithms to the following parameter-dependent PDEs. All 
computations are performed on an M1 MacBook with GNU Octave 8.2.0.

\subsection{Test Problems}

\begin{enumerate}

\item Navier-Stokes Flow in a Channel
\begin{equation}\label{coanda-model}
{
\small
\left\{
\begin{split}
-\mu \Delta \bu +\bu \cdot\nabla \bu +\nabla p &= \mathbf{0}\\
 \nabla \cdot \bu &= \mathbf{0}
\end{split}
\qquad\qquad
\begin{split}
\mathbf{u} &= \mathbf{u}_{\text{in}},\hspace{0.5em} \Gamma_{\text{in}},\\
\mathbf{u} &= 0,\hspace{0.5em} \Gamma_{\text{wall}},\\
-p\mathbf{n} + (\mu\nabla \mathbf{u})\mathbf{n} &= 0,\hspace{0.5em} \Gamma_{\text{out}}.
\end{split}
\right.
}
\end{equation}

\item {Rayleigh-B\'enard Convection}

\begin{equation}\label{rayben-model}
{
\small
\left\{
\begin{split}
-\mu\Delta\bu  + \bu \cdot \nabla\bu +\nabla p - \text{Ri}\, T \mathbf{e}_y &= \mathbf{0}\\
\nabla \cdot \bu &= 0 \\
-\kappa\Delta T +\bu\cdot\nabla T &= 0\\
\end{split}
\qquad\qquad
\begin{split}
T &= 1, \hspace{0.25em} \Gamma_1 := \{1\} \times (0,1),\\
T &= 0, \hspace{0.25em} \Gamma_2 := \{0\} \times (0,1),\\
\nabla T\cdot \bn &= 0, \hspace{0.25em} \Gamma_3 := (0,1) \times \{0,1\},\\
\bu &= \mathbf{0}, \hspace{0.25em} \partial\Omega = \Gamma_1\cup\Gamma_2\cup\Gamma_3.
\end{split}
\right.
}
\end{equation}

\end{enumerate}

In both models, $\mathbf{u}$ denotes the 
fluid velocity, $p$ the pressure, and $\mathbf{n}$ 
the outward normal. In Model \eqref{coanda-model}, $\mu$ denotes 
the viscosity parameter. In Model \eqref{rayben-model}, $T$ denotes
the temperature of the fluid, and $\ri$ denotes the 
Richardson number, which is the parameter of interest 
for Model \eqref{rayben-model}. We set $\mu=\kappa = 10^{-2}$. 
These parameters values are chosen 
so as to replicate the results seen in \cite{GaLiReWi12} as $\ri$ ranges from $3.0$ to $3.5$.
For the flow in a channel, Model \eqref{coanda-model}, we use $\mathcal{P}_2-\mathcal{P}_1$ Taylor-Hood elements \cite[p.~164]{braess07}. The channel, 
shown in Figure \ref{fig:meshes}, is arranged such that the left most boundary lies at $x=0$, the right most boundary lies 
at $x=50$, and 
the boundary components are given by 
$\Gamma_{\text{in}} = \{0\} \times [2.5,5]$, $\Gamma_{\text{out}} = \{50\} \times [0,7.5]$, and 
$\Gamma_{\text{wall}} = [0,10]\times (\{2.5\}\cup\{5\}) \cup \{10\} 
\times ([0,2.5]\cup [5,7.5]) \cup [10,40]\times (\{0\}\cup \{7.5\})$.
For the Rayleigh-B\'endard Model, we use $\mathcal{P}_2-\mathcal{P}_1^{\text{disc}}$ 
Scott-Vogelius elements \cite{GuSc19}, where $\mathcal{P}_1^{\text{disc}}$ denotes piece-wise 
linear discontinuous elements. 
The $\mathcal{P}_2-\mathcal{P}_1^{\text{disc}}$ elements are stable 
on Alfeld-split, also known as barycenter-split, triangulations
\cite[p.~77]{qin-thesis}.
The meshes used for 
each model are shown below.

\begin{figure}[H]
\centering
\includegraphics[width=70mm,height=60mm]{./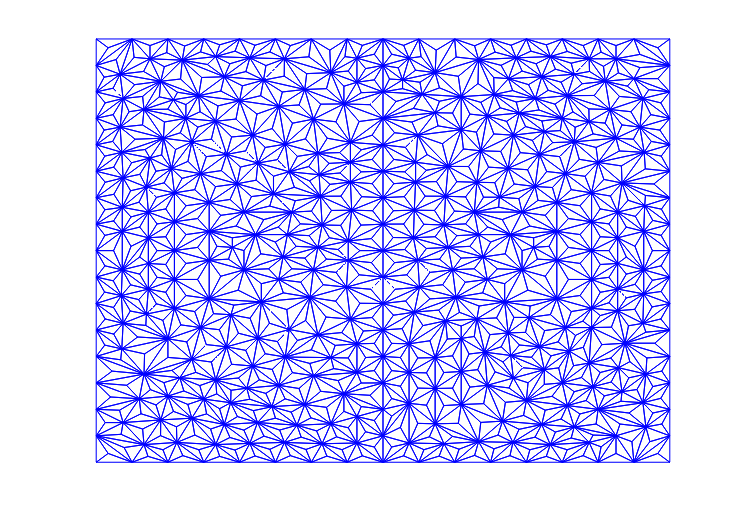}
\includegraphics[width=120mm,height=25mm]{./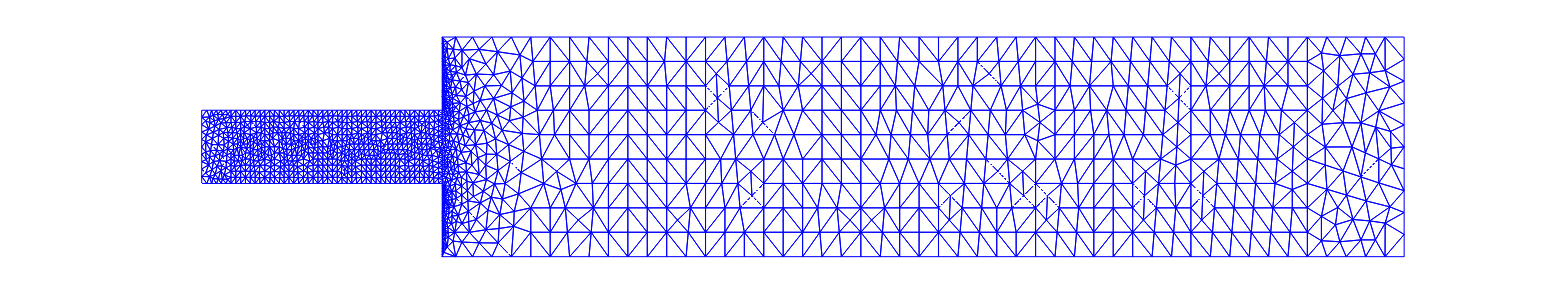}
\hspace{0.25mm}
\caption{ \label{fig:meshes} Meshes used for benchmark problems. Top: Mesh used for {Rayleigh-B\'enard}
model. Bottom: mesh used for flow in a channel.}
\end{figure}

For Model \eqref{coanda-model}, it is known \cite{pichithesis} that there exists a critical 
viscosity $\mu^*\in (0.9,1)$ at which a bifurcation occurs. For $\mu > \mu^*$, 
the stable velocity solution is symmetric about 
the center horizontal ($y=3.75$) as seen in the top plot 
of Figure \ref{fig:sym-asym-vels}. For $\mu < \mu^*$, 
there is still a symmetric solution, but it is unstable. Stability is inherited by 
two asymmetric solutions seen in the bottom two plots of 
Figure \ref{fig:sym-asym-vels}. 

\begin{figure}[H]
\centering
\includegraphics[width=90mm]{./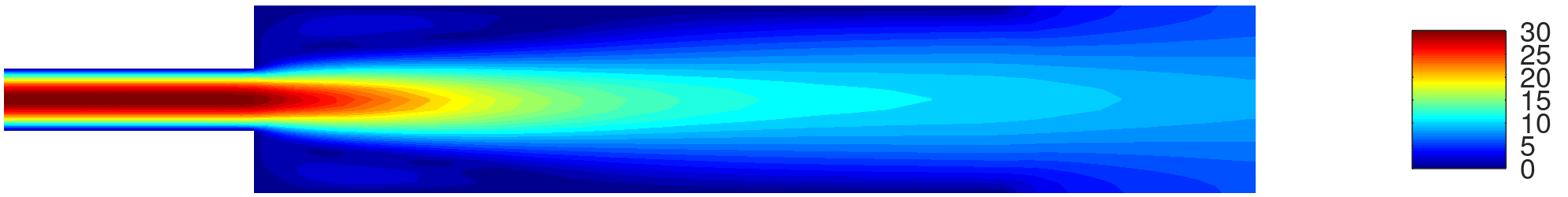}
\includegraphics[width=90mm]{./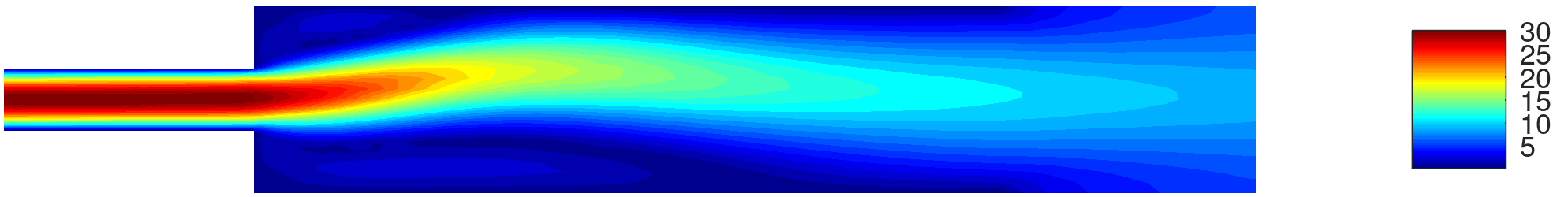}
\includegraphics[width=90mm]{./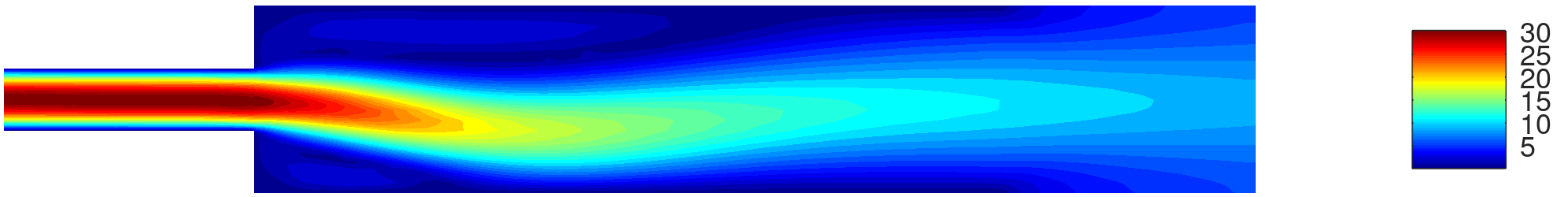}
\caption{ \label{fig:sym-asym-vels} Solutions to channel flow 
problem \eqref{coanda-model} for different $\mu$. Top: Representative
 symmetric solution for $\mu = 1.0$. Middle: Representative
 asymmetric solution with positive vertical velocity 
upon exiting the narrow channel for $\mu = 0.9$. 
Bottom: Representative 
asymmetric solution with negative vertical velocity 
upon exiting the narrow channel for $\mu = 0.9$.}
\end{figure}

The parameter region of interest for Model \eqref{rayben-model} is $\ri\in[3,3.5]$. In this range, 
the flow appears to be in transition from a single eddy 
in the center of the domain to 
two eddies as seen in Figure \ref{fig:eddies} below.

\begin{figure}[H]
\centering
\includegraphics[width=55mm,height=50mm]{./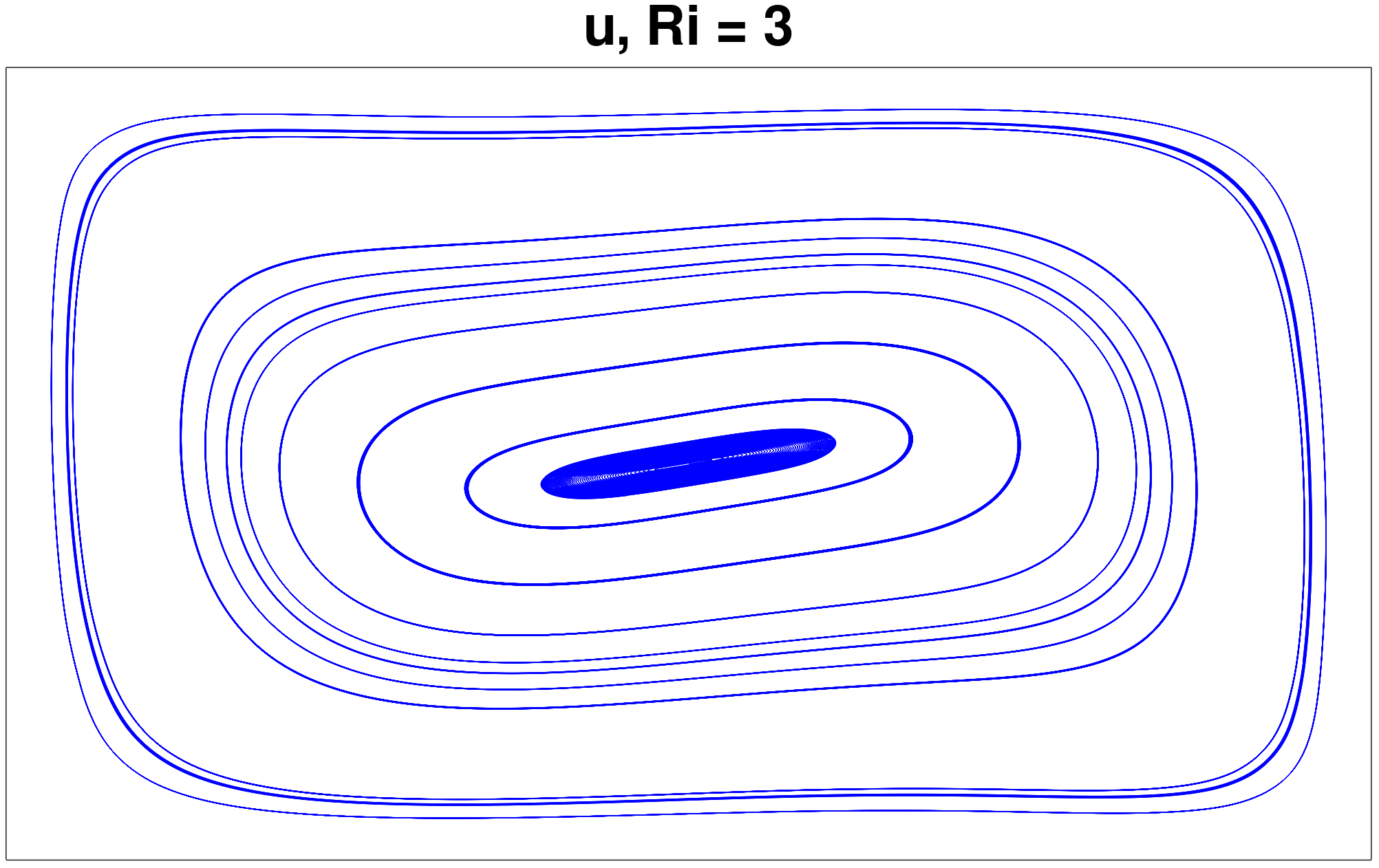}
\includegraphics[width=55mm,height=50mm]{./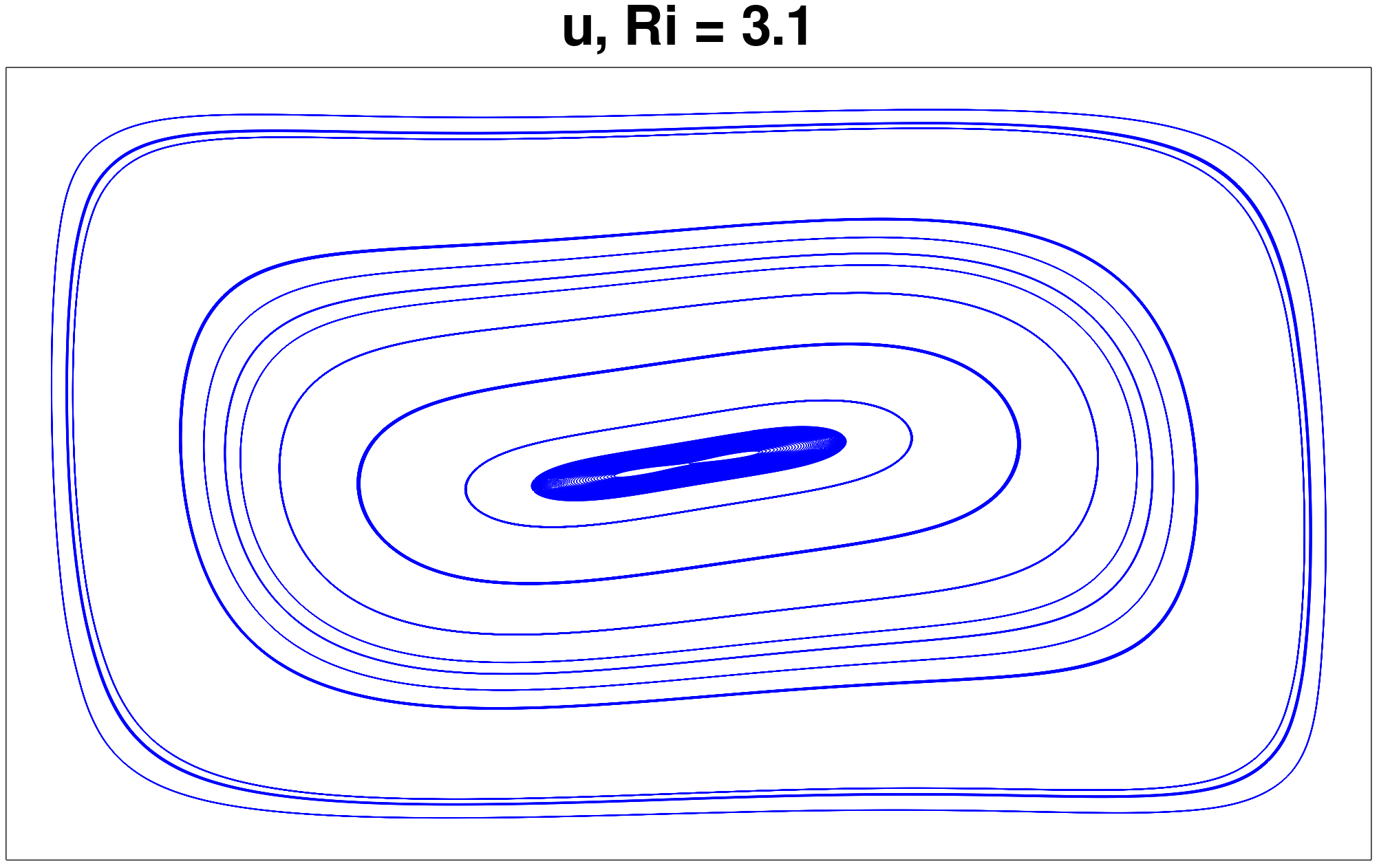}
\includegraphics[width=55mm,height=50mm]{./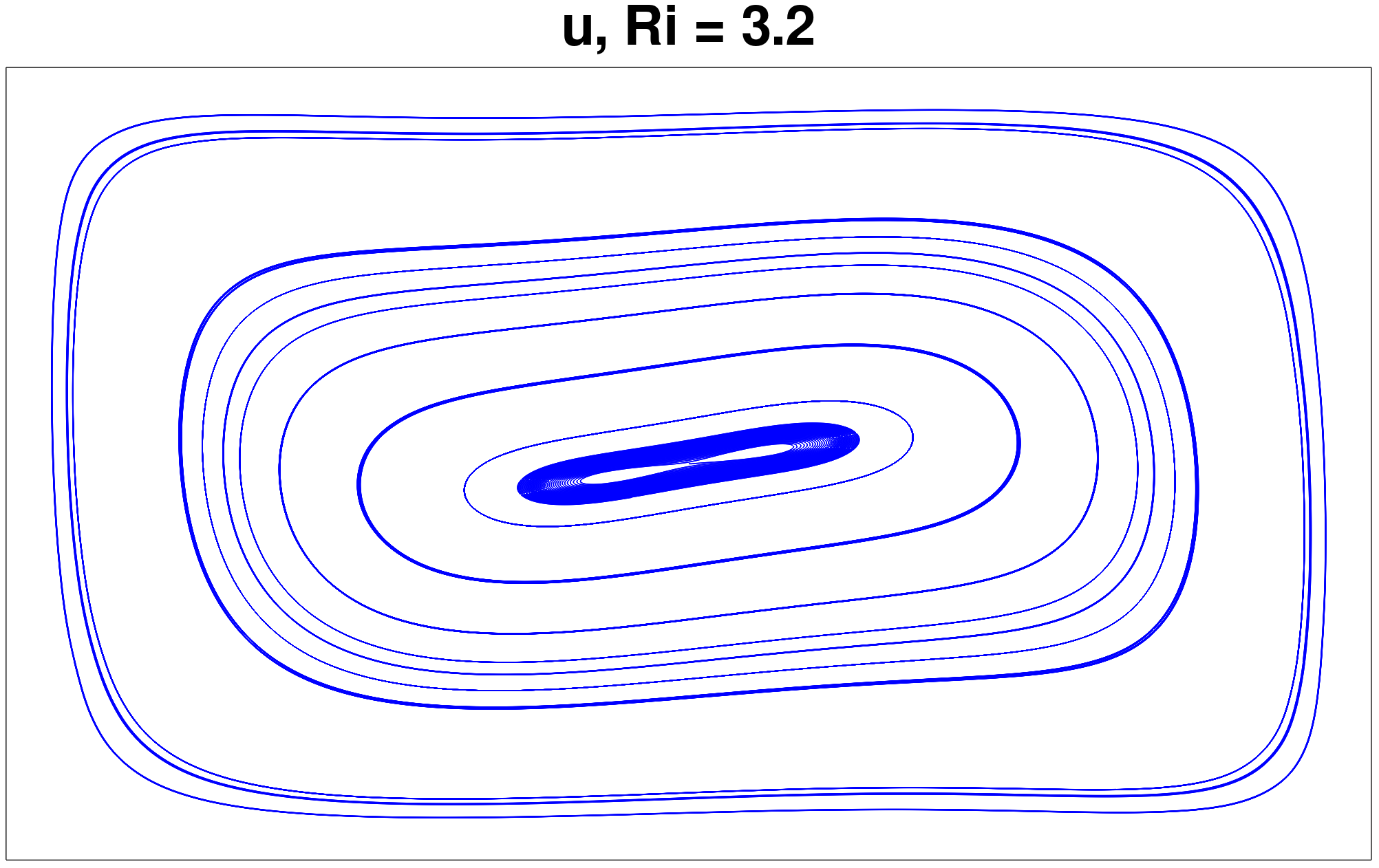}
\includegraphics[width=55mm,height=50mm]{./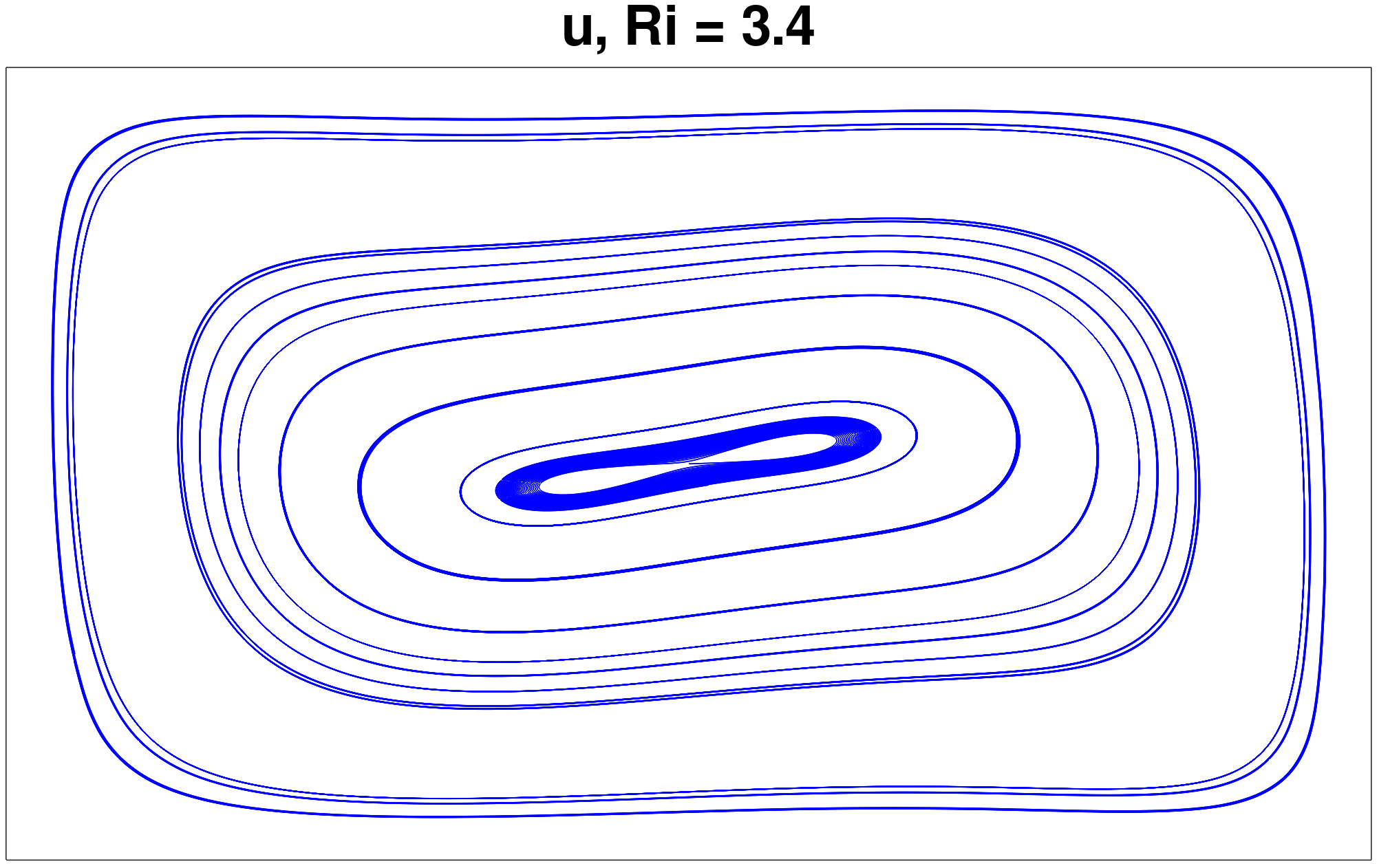}
\caption{ \label{fig:eddies} Velocity streamlines for velocity $u$
from Model \eqref{rayben-model} showing transition from 
one eddy to two eddies. Top Left: Ri = 3.0. Top Right: Ri = 3.1.
Bottom Left: Ri = 3.2. Bottom Right: Ri = 3.4.}
\end{figure}

For Model \eqref{coanda-model}, we take the zero vector 
as our initial iterate. For Model \eqref{rayben-model}, we initialize 
our iterate by applying a Picard step and then applying 
$\gna$. The $H^1$ seminorm is used in our implementations \cite{braess07}. 
Before we discuss the numerical results, we recall that  
\begin{itemize}
\item Newt is Algorithm \ref{alg:newt}. 
\item NA is Algorithm \ref{alg:na1}.
\item NA($m$) is Algorithm \ref{alg:nam}.
\item $\gamma$NA$(r)$, Algorithm \ref{alg:gsgna}, is Algorithm \ref{alg:na1} 
with line 5 replaced by Algorithm \ref{alg:gsg-og}. 
\item $\gna$, Algorithm \ref{alg:adapgsgna}, is Algorithm \ref{alg:na1} with line 
5 replaced by Algorithm \ref{alg:adapgsg}.  
\end{itemize}
For $\gamma$NA$(r)$ and $\gna$, we set $r$ and $\hat{r}$ 
respectively to a fixed quantity for all iterations. 

\subsection{General Discussion of Results} 
\label{sec:gendiscussion}

The following experiments demonstrate three strategies for solving nonlinear problems
near bifurcations using NA and $\gna$. The first two are 
{\it asymptotic safeguarding} and {\it preasymptotic 
safeguarding}. With asymptotic safeguarding, 
we run NA until
the residual is smaller
than some user-chosen threshold, and 
we use $\gna$ for all subsequent iterates. Hence the 
solve will behave like NA until the last few iterations when 
$\gna$ is applied. For the 
experiments in Section \ref{sec:asymsg}, we activated 
$\gna$
when $\|w_{k+1}\| < 10^{-1}$. 
We chose $10^{-1}$ since 
we want to activate $\gamma$-safeguarding 
as early as possible. 
This allows for earlier detection of a nonsingular problem, and thus 
faster convergence. We found that 
activating $\gamma$-safeguarding for $\|w_{k+1}\|<\tau<10^{-1}$ will not 
necessarily break convergence, 
but if the problem is nonsingular, this will
not be detected as early.
Activating $\gna$ 
when $\|w_{k+1}\| < 1$ can break convergence, 
though this
seems to be problem-dependent.
Convergence for Model \eqref{coanda-model} was virtually unaffected 
with threshold $\|w_{k+1}\| < 1$, but for Model \eqref{rayben-model}
 with $\ri = 3.5$,
setting the activation threshold to $1$ caused 
$\gnaarg{0.9}$ 
to diverge when it had converged with threshold $0.1$.
With activation threshold $0.1$, asymptotic safeguarding 
is shown to be effective 
close to the bifurcation point, where Newton's method 
can fail to converge. 
Preasymptotic safeguarding, on the other hand, applies 
$\gna$ at each step of the solve. Interestingly, we observe 
that $\gna$ applied preasymptotically can outperform NA when applied to Model \eqref{rayben-model}, and even recover 
convergence when both Newton and $\NA$ diverge (see 
Figure \ref{fig:preasym-ri3.2}). A theoretical explanation for 
this requires a better understanding of these methods in the 
preasymptotic regime. In particular, a better understanding 
of the descent properties of Anderson acceleration would be 
of great value. It is known \cite{TuWaSh02} that for 
singular problems in the 
preasymptotic regime, where $\|f(x)\|$ is not small, the 
Newton update step $s_k = -f'(x_k)^{-1}f(x_k)$, 
or $w_{k+1}$ in our notation, can be large 
and nearly orthogonal to the gradient of $\|f\|_2^2$. 
With $\NA$, our update step takes the form 
$s_k^{\NA}=w_{k+1}-\gamma_{k+1}(x_{k+1}^{\newt}-x_k^{\newt})$. 
It is clear that $s_k^{\NA}$ is a descent direction for 
sufficiently small $\gamma_{k+1}$, since in this case it is
nearly $w_{k+1}$. It is possible that for certain values of 
$\gamma_{k+1}$, $s_k^{\NA}$ is a stronger descent direction 
than $s_k$. 

Another possible explanation as to why $\gna$ can outperform $\NA$ 
in some cases is its resemblance to restarted Anderson 
acceleration methods. Restarted versions of Anderson 
acceleration are often applied in various 
forms for depth $m>1$, and 
have been shown to effective in practice \cite{ChDuLeSe21,HeZhXiHoSa22,HeVa19,Pa19}. In the special case of Newton-Anderson with depth $m=1$, every odd iterate is simply 
a Newton step, rather than 
a combination of the previous two Newton steps. Hence 
the algorithm is ``restarted" every other step. Explicitly,
we have for $k\geq 1$,  
\begin{align*}
x_{2k-1} &= x_{2k-1}^{\newt}\\
x_{2k} &= x_{2k}^{\newt} - \gamma_{2k}(x_{2k}^{\newt}-
x_{2k-1}^{\newt})
\end{align*}
In other words, $\gamma_{2k-1} = 0$ for all $k\geq 1$. With 
$\gna$, $\gamma_{k+1}$ is not necessarily set to zero, but 
it is scaled towards zero, significantly so depending on 
$\beta_{k+1}$. In this way, one may think of $\gna$ as 
a {\it quasi-restarted} Anderson scheme when the depth 
$m = 1$. At the moment, $\gamma$-safeguarding is not 
developed for $m > 1$, but this interpretation of 
$\gna$ as a quasi-restarted method could lead to such 
a development. 
Presently, these are only heuristics, but they provide
interesting questions for future projects.

The third technique we demonstrate to solve these problems 
near bifurcation points is increasing the depth 
$m$. Evidently, the right choice of $m$ can significantly 
improve convergence by reducing the number of iterations
to convergence by half, and increase the domain of 
convergence with respect to the parameter. We found, however, that such 
performance was very sensitive to the choice of $m$. So while
the results suggest this could be developed into a viable 
strategy, more work is required to achieve this.

In all experiments we take $\hat{r}\in (0,1)$ 
since this is the range in which local convergence of $\gna$ 
is guaranteed by Theorem \ref{thm:convergence-chap3}. In 
practice, neither Algorithm \ref{alg:gsgna} nor Algorithm 
\ref{alg:adapgsgna} breaks down if one sets $r$ or $\hat{r}$ 
respectively to zero, one, or a value greater than one. 
Setting $r$ or $\hat{r}$ to zero reduces the iteration to 
Newton, and choosing one leads to a more NA like iteration in 
the preasymptotic regime. A systematic 
study of $\gna$ with $\hat{r} \geq 1$ has not been performed, 
but experiments performed 
thus far show no significant advantage over $\hat{r} \in (0,1)$. 

What the best choice of $\hat{r}$ is remains an open 
question. Numerical experiments suggest the best choice 
depends on the initial guess $x_0$. For example, when applied to the channel flow problem preasymptotically (see Section 
\ref{sec:pasymsg}), setting $\hat{r} = 0.9$ results in faster convergnece than Newton if $x_0$ is the zero vector. 
If we perturb this $x_0$ (discussed in Section \ref{sec:preasym_channel_flow}), then the choice of $\hat{r} = 0.9$ leads to $\gna$ converging slower than Newton, while $\hat{r}=0.5$ outperforms NA and Newton. This phenomena, that the 
best choice of $\hat{r}$ in $\gna$ varies with $x_0$, is 
seen with other choices of $x_0$ as well. Elucidating this 
dependence is the subject of ongoing work.

\subsection{Asymptotic Safeguarding}
\label{sec:asymsg}

Under the assumptions of Theorem \ref{thm:convergence-chap3}
or Theorem \ref{thm:nonsing-case}, $\gna$ is guaranteed to converge locally. This motivates
the strategy of this subsection. 
As discussed above, asymptotic safeguarding is when we only 
apply $\gna$ once the residual is smaller than a set 
threshold.
This allows one to take full advantage of NA in the 
preasymptotic regime, and 
ensures fast quadratic convergence for nonsingular problems. 
In practice, this means fast local convergence provided 
NA reaches the domain of convergence. 
We activate $\gna$ when $\|w_{k+1}\| < 10^{-1}$ 
in the experiments below. When $\|w_{k+1}\| > 10^{-1}$, 
we run NA. 

\subsubsection{Results for Channel Flow Model}

The results when applied 
to Model 
\eqref{coanda-model} are shown below in 
Figures \ref{fig:asym-mu0.96}, \ref{fig:asym-mu0.94}, and \ref{fig:asym-mu0.92}.
The takeaway is that when applied asymptotically, convergence 
of $\gna$ is not as sensitive to the choice of $\hat{r}$ as it is when applied in the preasymptotic regime (see 
Section \ref{sec:pasymsg}), and 
Algorithm \ref{alg:adapgsg}
is working as intended 
by detecting that the problem is 
nonsingular.
This is seen in the plot on the right of 
Figures \ref{fig:asym-mu0.96}, \ref{fig:asym-mu0.94}, and \ref{fig:asym-mu0.92}.
Recall that $r_{k+1}$ is the adaptive parameter in Algorithm \ref{alg:adapgsg} 
that determines how close a 
$\gna$
step is to a Newton step. 
When $r_{k+1}\approx 0$, 
and the criteria in Algorithm \ref{alg:adapgsg} is met, 
the 
$\gna$
step $x_{k+1}$ will be close to a Newton step. When 
$r_{k+1} \approx 1$, this scaling will be much less severe, 
and the $\gna$ step $x_{k+1}$ will 
be close to an NA step. From our discussion 
in Section \ref{sec:adapgsg}, we want $r_{k+1} \to 0$ when the problem is 
nonsingular in order to enjoy local quadratic convergence. This is not guaranteed 
with NA \cite{ReXi23}. Observing the $r_{k+1}$ plots in Figures \ref{fig:asym-mu0.96},
\ref{fig:asym-mu0.94}, and \ref{fig:asym-mu0.92}, one notes that $r_{k+1}\to 0$ as the solver converges. Since $r_{k+1} = \min\{\eta_{k+1},\hat{r}\}$, this is equivalent to $\eta_{k+1}\to 0$ as the solve converges. Thus by Theorem \ref{thm:nonsing-case}, the $\gna$ iterates converge to
Newton iterates asymptotically. This is precisely what $\gna$ was designed to do: detect nonsingular problems, and respond by scaling the iterates towards Newton
asymptotically. Since $\gna$
is only activated when $\|w_{k+1}\| < 10^{-1}$ in these examples of asymptotic safeguarding, $r_{k+1}$ is only computed, and plotted, for the last two iterations. In the next section on preasymptotic safeguarding, a more interesting 
$r_{k+1}$ history is seen. Our methods, including 
$\NA$, failed to converge for $\mu < 0.92$. If one wishes 
to solve a problem at a particular parameter, and a direct 
solve fails like we see here for $\mu < 0.92$, one could still
employ $\NA$ or $\gna$ to solve the problem directly for 
a parameter value close to the desired one to obtain an 
initial guess for continuation. The benefit here is that 
the continuation would be required in a smaller parameter 
range, thus reducing the total number of solves required.
We will see in Section \ref{sec:incnewtdepth} that 
increasing $m$ can lead to 
convergence for a wider range of parameters.

\begin{figure}[H]
\centering
\includegraphics[width=80mm]{./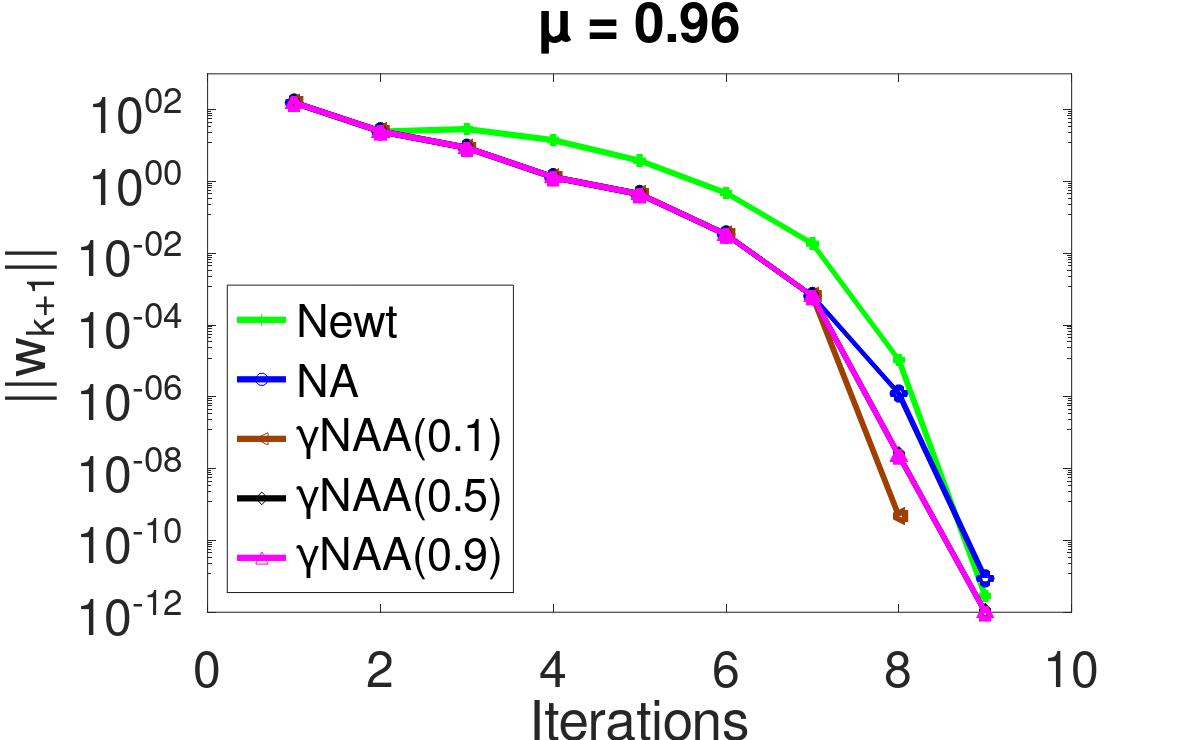}
\includegraphics[width=80mm]{./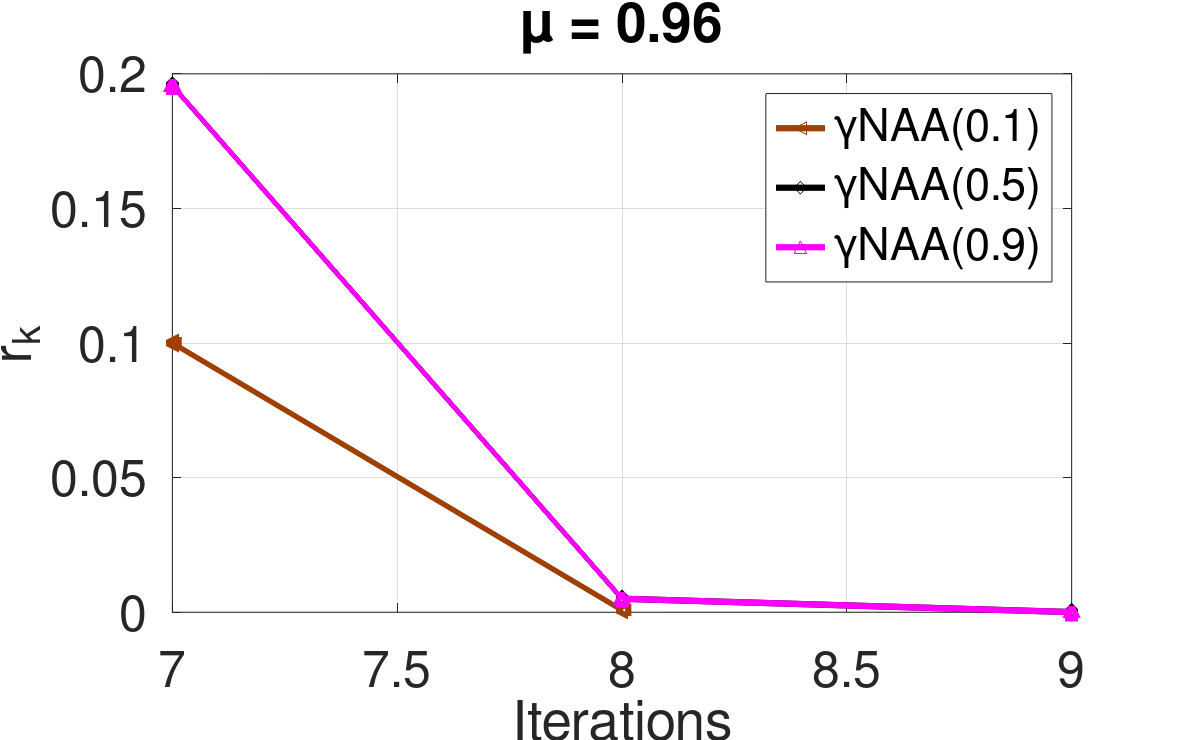}
\caption{ \label{fig:asym-mu0.96} Comparison of 
$\gna$ applied asymptotically with Newton and NA applied 
to Model \eqref{coanda-model} with $\mu = 0.96$.}
\end{figure}

\begin{figure}[H]
\centering
\includegraphics[width=80mm]{./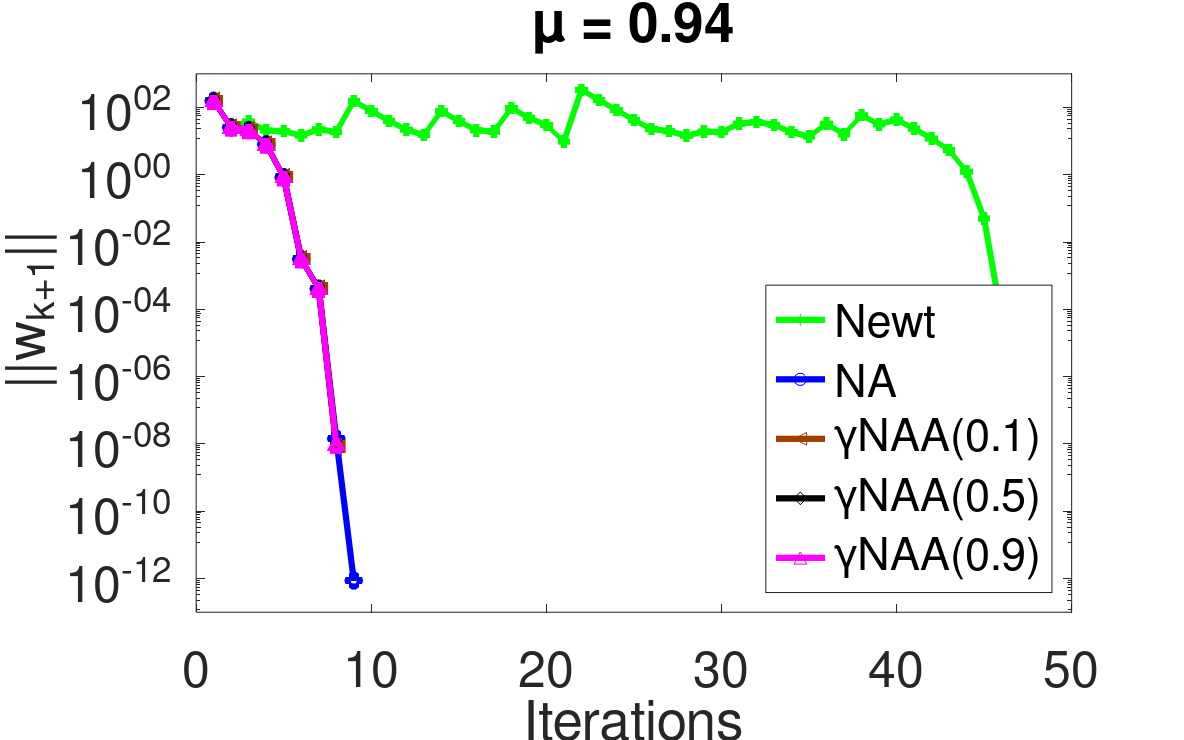}
\includegraphics[width=80mm]{./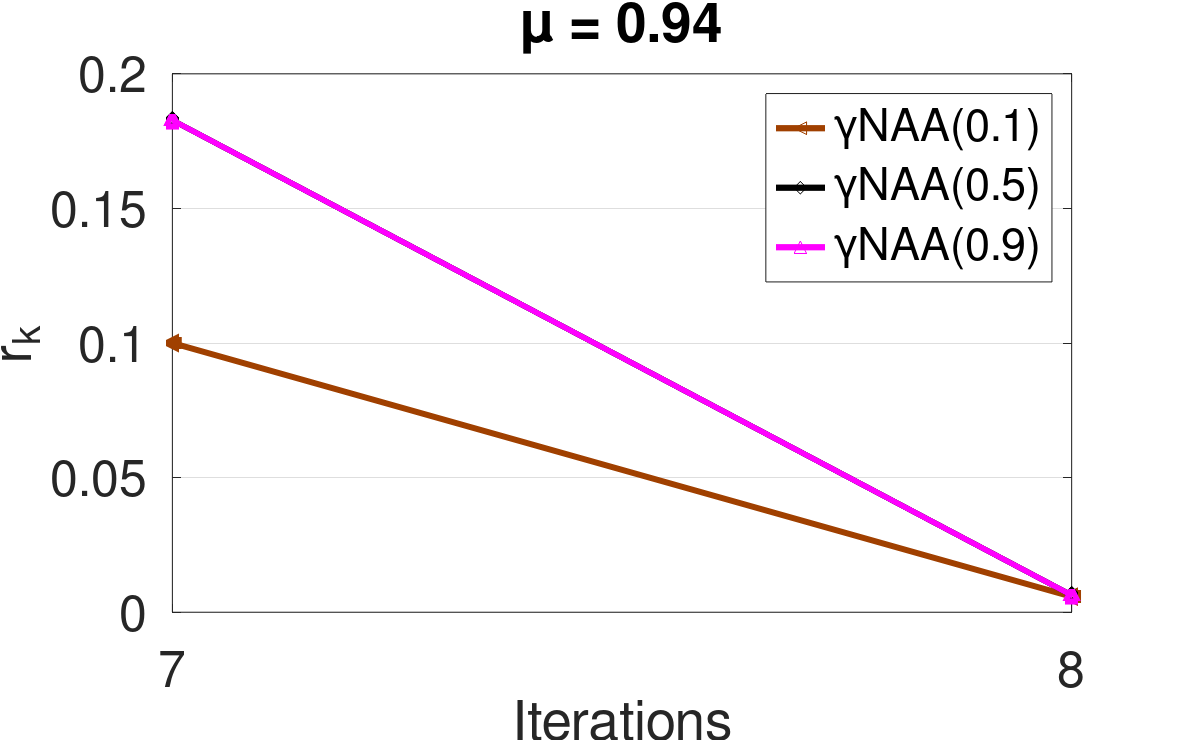}
\caption{ \label{fig:asym-mu0.94} Comparison of 
$\gna$ applied asymptotically with Newton and NA applied 
to Model \eqref{coanda-model} with $\mu = 0.94$.}
\end{figure}

\begin{figure}[H]
\centering
\includegraphics[width=80mm]{./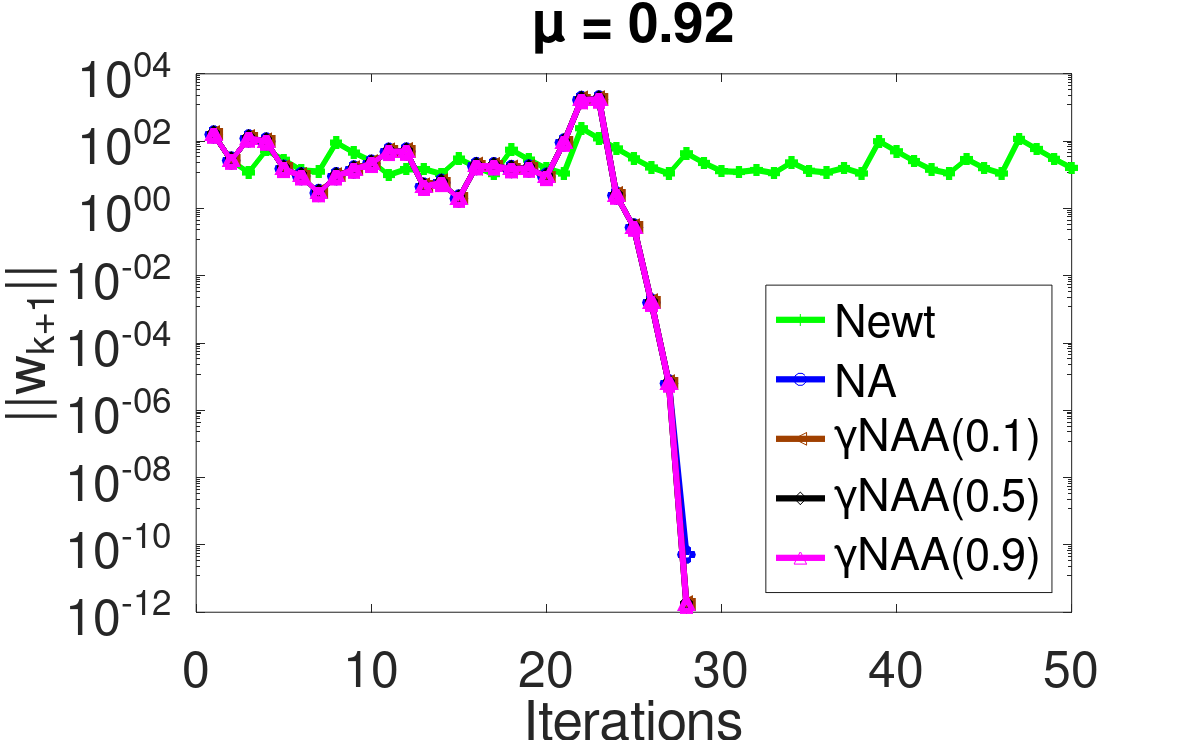}
\includegraphics[width=80mm]{./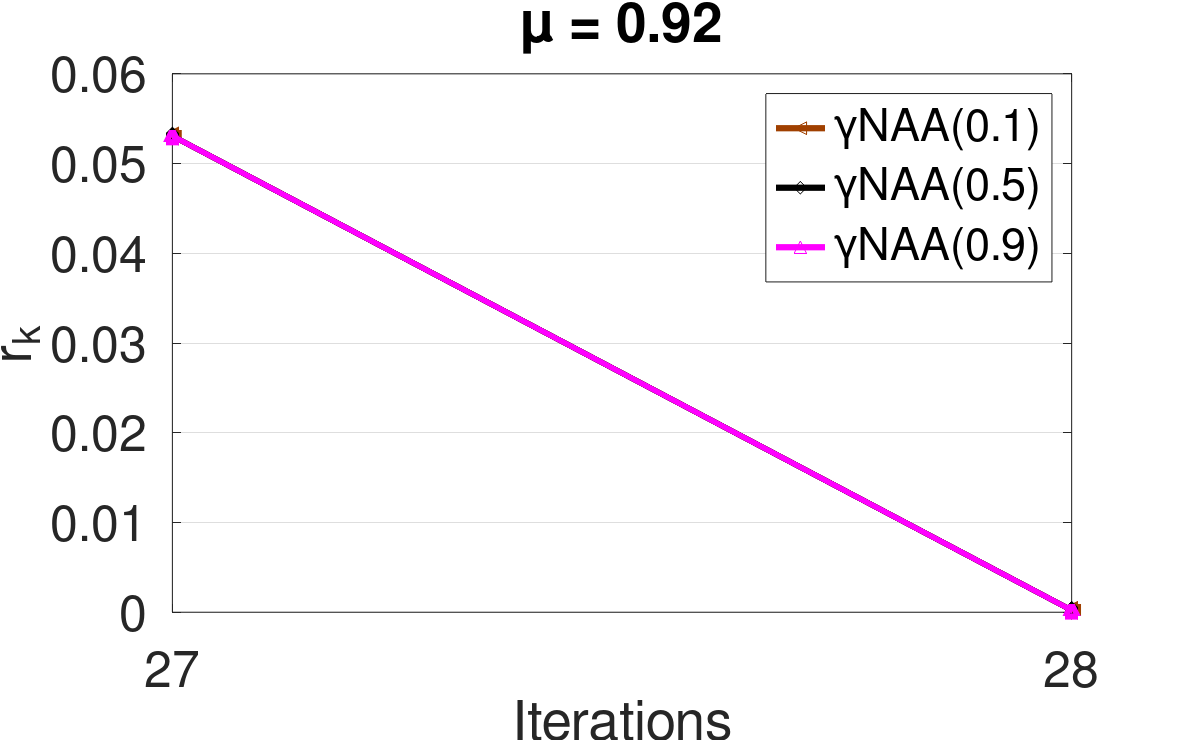}
\caption{ \label{fig:asym-mu0.92} Comparison of 
$\gna$ applied asymptotically with Newton and NA applied 
to \eqref{coanda-model} with $\mu = 0.92$.}

\end{figure}

\subsubsection{Results for Rayleigh-B\'enard Model}
\label{sec:rayben-asymsg}

The results of asymptotic safeguarding with activation
threshold $0.1$ applied to Model \eqref{rayben-model} are similar to 
those of Model \eqref{coanda-model} seen in the previous section. 
In this case, however, NA diverged for $\ri = 3.0$ and 
$\ri = 3.2$. For $\ri = 3.0$, Newton's method converges, but 
all methods diverged for $\ri = 3.2$. In Section 
\ref{sec:pasymsg}, we are able to recover convergence 
for $\ri = 3.2$ with preasymptotic safeguarding. 
For $\ri = 3.1$, 
$\ri = 3.3$, $\ri = 3.4$, and $\ri = 3.5$, Newton diverged
 while NA and $\gna$ converged. Like with Model \eqref{coanda-model}, we see $r_{k+1}\to 0$ as the solve converges. The results for 
$\ri = 3.4$, shown below, are representative of the others for which NA converged. 

\begin{figure}[H]
\centering
\includegraphics[width=80mm]{./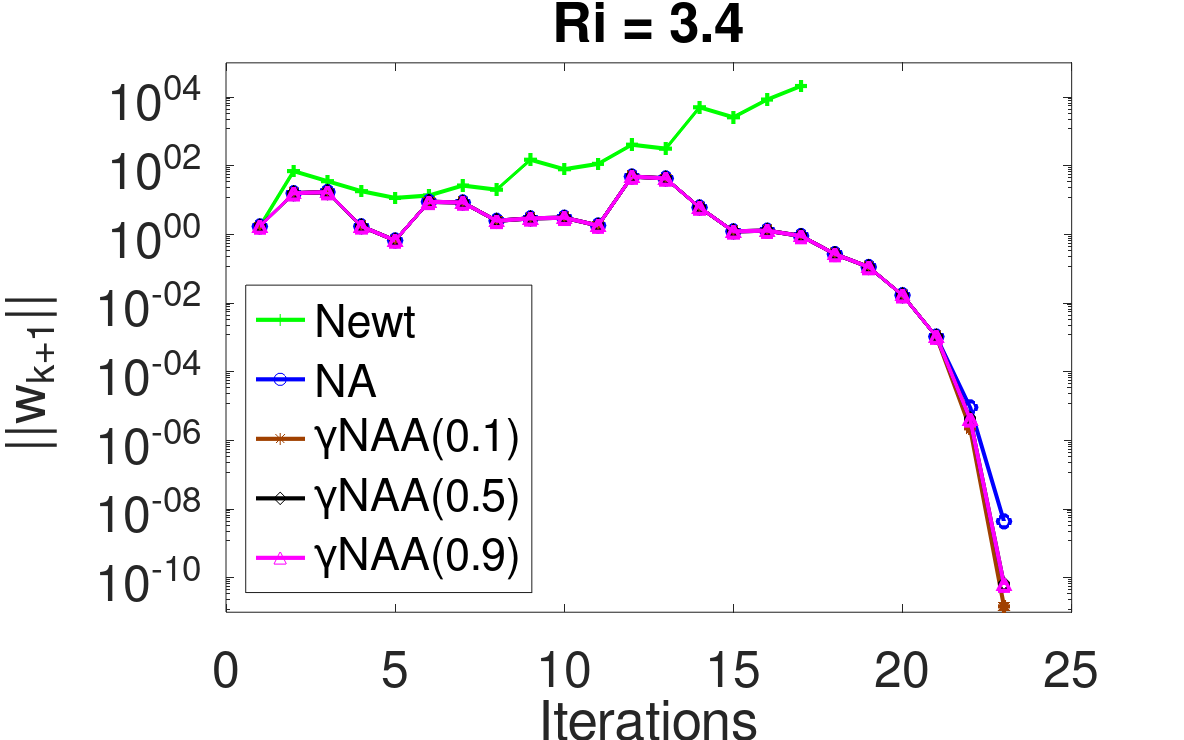}
\includegraphics[width=80mm]{./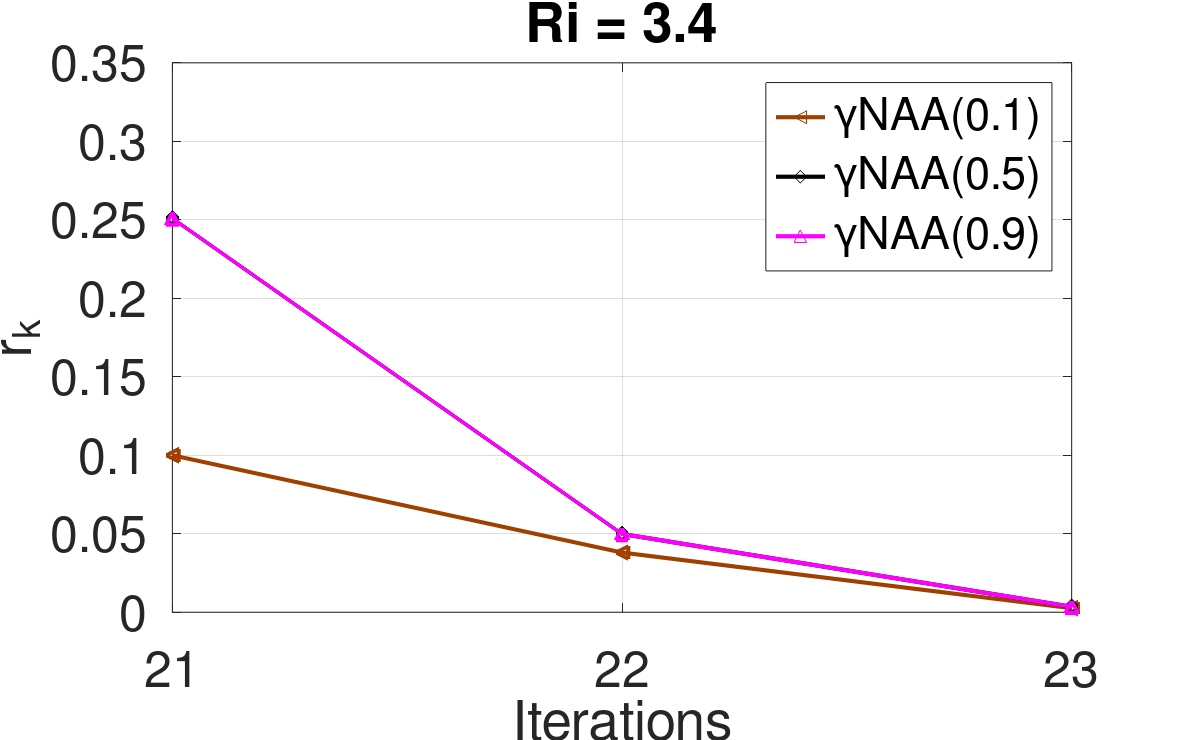}
\caption{ \label{fig:asym-ri3.4} Comparison of 
$\gna$ applied asymptotically with Newton and NA applied 
to Model \eqref{rayben-model} with $\ri = 3.4$.}
\end{figure}

\subsection{Preasymptotic Safeguarding}
\label{sec:pasymsg}

In this section, we demonstrate the preasymptotic safeguarding strategy, where $\gna$ is activated starting at iteration $k=2$, the first iteration where $\NA$ can be applied. Compared to asymptotic safeguarding, preasymptotic safeguarding is more sensitive to the choice of $\hat{r}$, but, with the right choice of $\hat{r}$, it can recover convergence when both Newton and $\NA$ fail. 

\subsubsection{Results for Channel Flow Model}
\label{sec:preasym_channel_flow}
Here we present the results from applying the preasymptotic 
safeguarding strategy to the channel flow Model 
\eqref{coanda-model}. Evidently, Newton's method can still converge quickly with the right initial guess when 
$\mu=0.96$. Figure \ref{fig:mu0.96} demonstrates that in this case, $\NA$ and $\gna$ also 
perform well. The right-most plot in Figure \ref{fig:mu0.96} demonstrates that the $\gna$ is 
working as intended. That is, $r_{k+1}\to 0$, and therefore 
$\gna$ is detecting that Newton is converging quickly,
and responds by scaling its update steps
towards a pure Newton iteration. 
With $\mu = 0.94$, we are closer to the bifurcation point, and observing the left plot in Figure 
\ref{fig:mu0.94}, 
we see that 
Newton takes many more iterations to converge. However, after a long preasymptotic 
phase, Newton does eventually converge quickly, which suggests that the problem is not
truly singular for $\mu = 0.94$. This is again detected by $\gna$. 
Even though the $\gna$ algorithms take a few more iterations to converge than NA with 
$\mu = 0.94$, 
the terminal order of convergence of $\gna$ is greater than that of NA. Approximating 
the rate by $q_{k+1} = \log(\|w_{k+1}\|)/\log(\|w_k\|)$ at each step $k$, and letting
$q_{\text{term}}$ denote the terminal order, we found that 
$q_{\text{term}} = 1.537$ for NA, $q_{\text{term}} = 3.386$ for $\gnaarg{0.1}$, and 
$q_{\text{term}} = 2.091$ for $\gnaarg{0.9}$. 
In the right most plot in Figure 
\ref{fig:mu0.94}, we take as our initial iterate the 
zero vector, but with the fifth entry set to 50. From this 
perturbed initial guess, Newton's method is seen to 
perform better than NA. Further, $\gnaarg{0.1}$ and 
$\gnaarg{0.5}$ outperform both Newton and NA, with 
$\gnaarg{0.5}$ converging in about half as many 
iterations as NA. This demonstrates the flexibility 
offered by $\gna$. That is, whether Newton 
or NA is the best choice for a particular problem 
and initial guess, $\gna$ is more agnostic to 
these choices, and can perform well in either 
case. There is still, however, sensitivity 
to $\hat{r}$.
We found that $\gna$ failed to converge for 
$\hat{r} = 0.3$, $0.5$, $0.6$, $0.7$, and $0.8$. $\gna$ converged with 
$\hat{r} = 0.4$, but only after 74 iterations. We  observed that 
$\gamma$NA(0.5), the non-adaptive version of $\gamma$-safeguarding, managed to 
converge. The reason for this variation is likely due to the complex behavior of Newton and NA 
in the preasymptotic regime, leading to sensitivity to the 
choice of $\hat{r}$. With preasymptotic 
safeguarding, our methods failed for $\mu < 0.94$, hence 
continuation may still be required in some cases,
but NA and $\gna$ can be used to efficiently solve the 
problem closer to the bifurcation point compared to 
Newton, thereby reducing the overall computational 
cost.
The point is that applying $\gamma$NA$(r)$ and $\gna$ in the 
preasymptotic regime can still lead to faster convergence 
than Newton, but this convergence is again 
sensitive to the choice of $\hat{r}$, and further
work is required to understand this sensitivity.

\begin{figure}[H]
\centering
\includegraphics[width=80mm]{./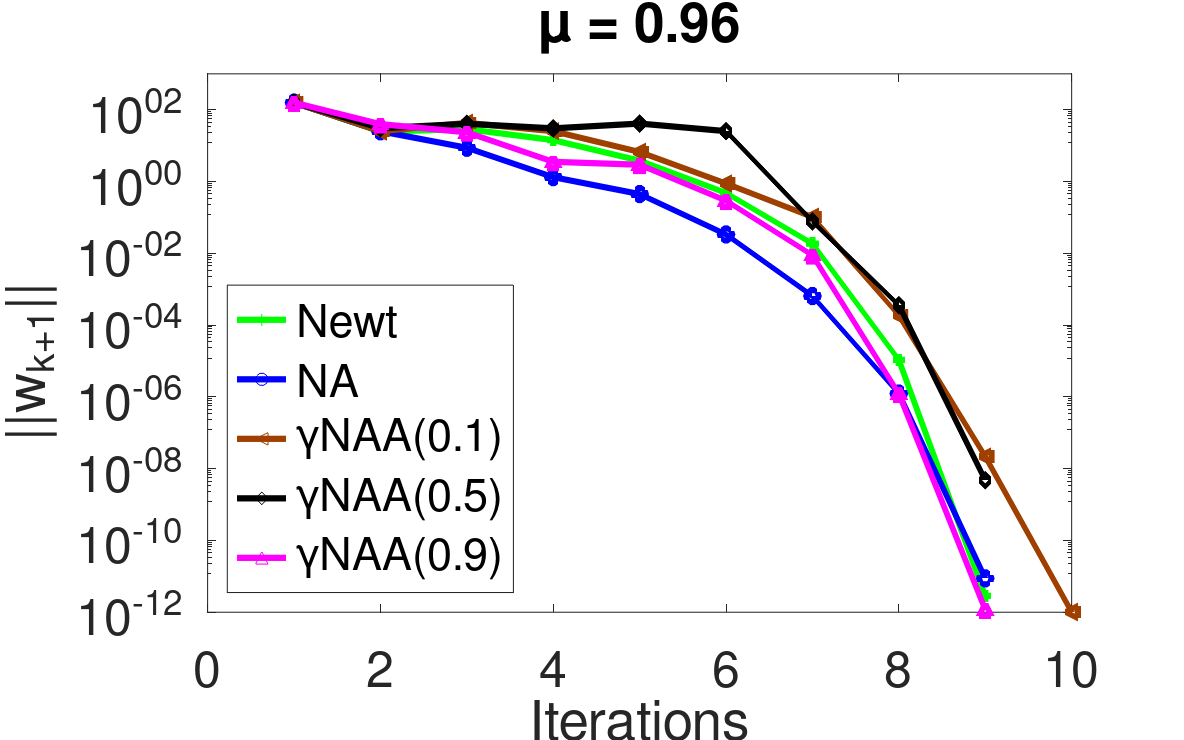}
\includegraphics[width=80mm]{./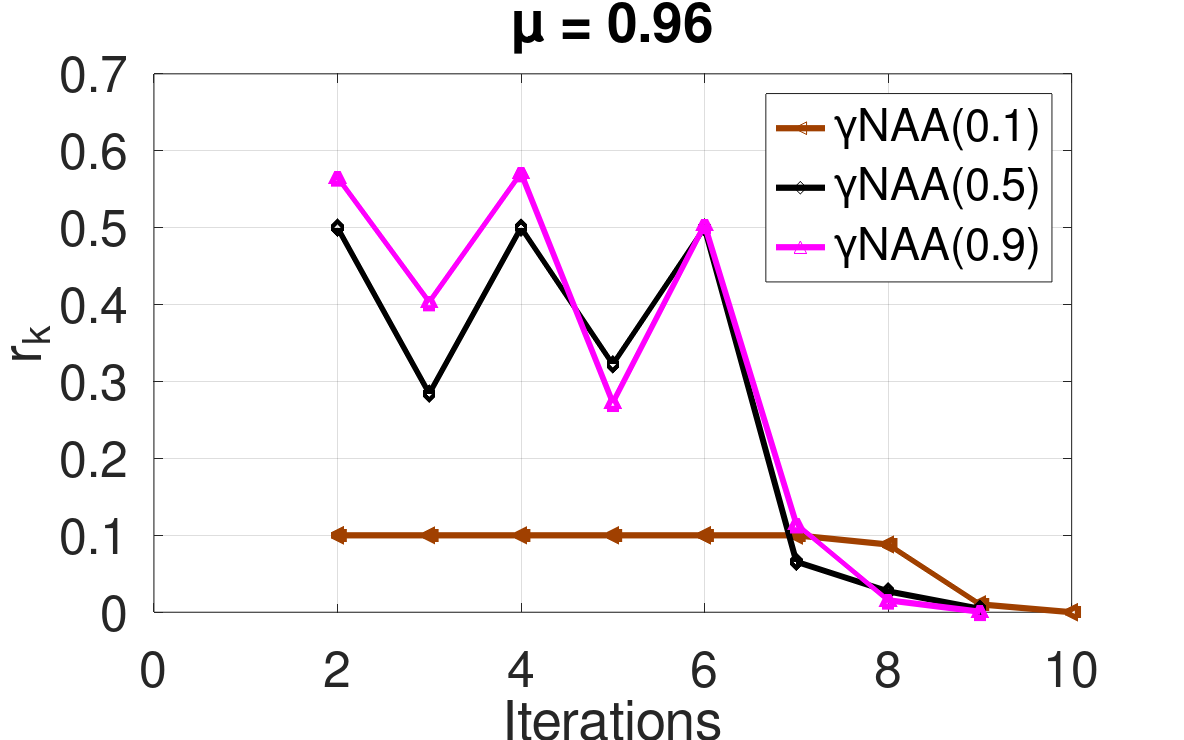}
\caption{ \label{fig:mu0.96}
Results of preasymptotic safeguarding applied to 
Model \eqref{coanda-model} for $\mu=0.96$. Left: Convergence history for Newton, NA, and 
$\gna$ for $\mu = 0.96$. Right: The value of 
$r_{k+1}$ at each iteration for the adaptive methods.}
\end{figure}

\begin{figure}[H]
\centering
\includegraphics[width=80mm]{./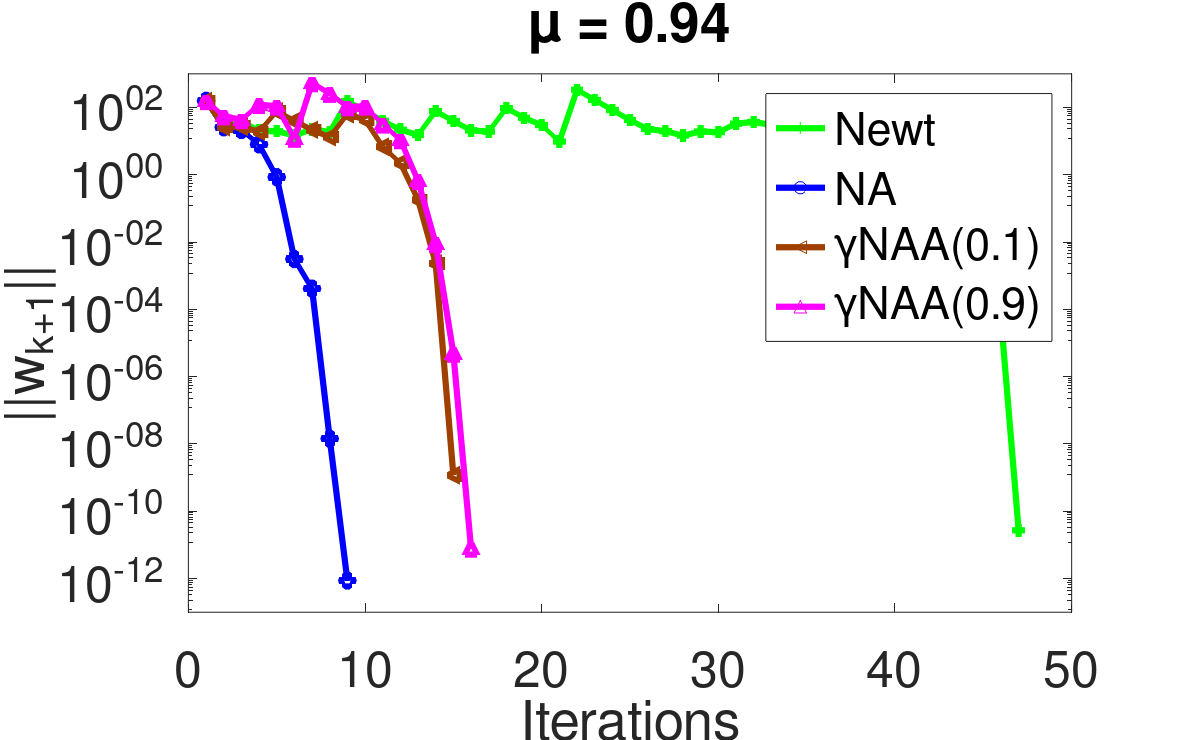}
\includegraphics[width=80mm]{./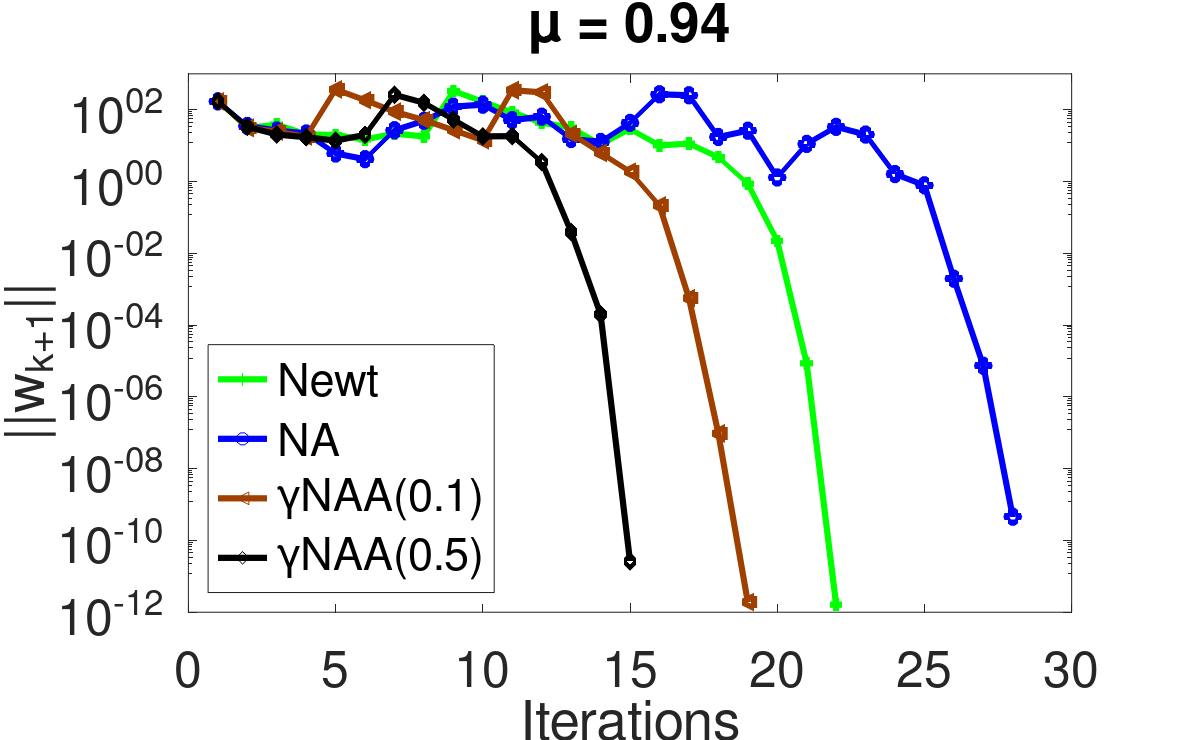}
\caption{\label{fig:mu0.94} Results of preasymptotic safeguarding applied to 
Model \eqref{coanda-model} for $\mu=0.94$. Left: Convergence history for Newton, NA, and 
$\gna$ for $\mu = 0.94$ with 
zero vector initial guess.
Right: Convergence history for Newton, NA, 
and $\gna$ for 
$\mu = 0.94$ with perturbed initial guess.}
\end{figure}

\subsubsection{Results for Rayleigh-B\'enard Model}

With preasymptotic safeguarding applied to 
Model \eqref{rayben-model}, we again see more varied
behavior since we have $\gamma$-safeguarding activated from 
the beginning of the solve. The results are shown in Figures \ref{fig:preasym-ri3.1}, \ref{fig:preasym-ri3.2}, and 
 \ref{fig:preasym-ri3.4}. The $r_{k+1}$ plots are only 
shown for those values of $\hat{r}$ for which $\gna$ converged.
We found that for $\ri = 3.0$ and $\ri = 3.1$, $\gna$ 
failed to converge with $\hat{r} = 0.1$, $0.5$, or 
$0.9$. Convergence is recovered with $\hat{r} = 0.4$ and 
$\hat{r} = 0.6$ respectively. We also ran $\gna$ with these
$\hat{r}$ values for $\ri = 3.2$, $3.3$, $3.4$, and $3.5$. The results were similar for 
$\ri = 3.3$, $3.4$, and $3.5$. Hence 
we only show the results for $\ri = 3.4$ 
in Figure \ref{fig:preasym-ri3.4}.
The theme demonstrated in Figures \ref{fig:preasym-ri3.1}, \ref{fig:preasym-ri3.2}, and 
 \ref{fig:preasym-ri3.4} is that 
when preasymptotic
safeguarding is employed, it is possible to converge faster
than standard NA. Moreover, as seen in Figure 
\ref{fig:preasym-ri3.2}, $\gna$ can converge when both Newton and NA diverge. One may note that in the right-most 
plot in Figure \ref{fig:preasym-ri3.1},
prior to the asymptotic regime where $r_{k+1}\to 0$, 
we observe $r_{k+1} <
\hat{r}=0.6$ only twice. Using the restarted Anderson 
interpretation discussed in Section \ref{sec:gendiscussion},
we could say that there are two {\it quasi-restarts} prior 
to the asymptotic regime. Evidently, these two 
quasi-restarts are essential, since we found that
non-adaptive $\gamma$-safeguarding with $\hat{r}=0.6$, 
Algorithm \ref{alg:gsg-og}, diverges. Similar
behavior of $r_{k+1}$ is seen in Figures 
\ref{fig:preasym-ri3.2} and \ref{fig:preasym-ri3.4}.
It remains unclear
precisely how the choice of $\hat{r}$ affects convergence, 
e.g., in Figure \ref{fig:preasym-ri3.1}, why does 
$\gnaarg{0.6}$ converge, but $\gna$ diverges for 
$\hat{r}=0.1$, $0.4$, $0.5$, and $0.9$? As 
previously discussed, a better 
understanding of these methods in the preasymptotic regime 
would help answer questions like these, and this will be
the focus of future studies. 

\begin{figure}[H]
\centering
\includegraphics[width=80mm]{./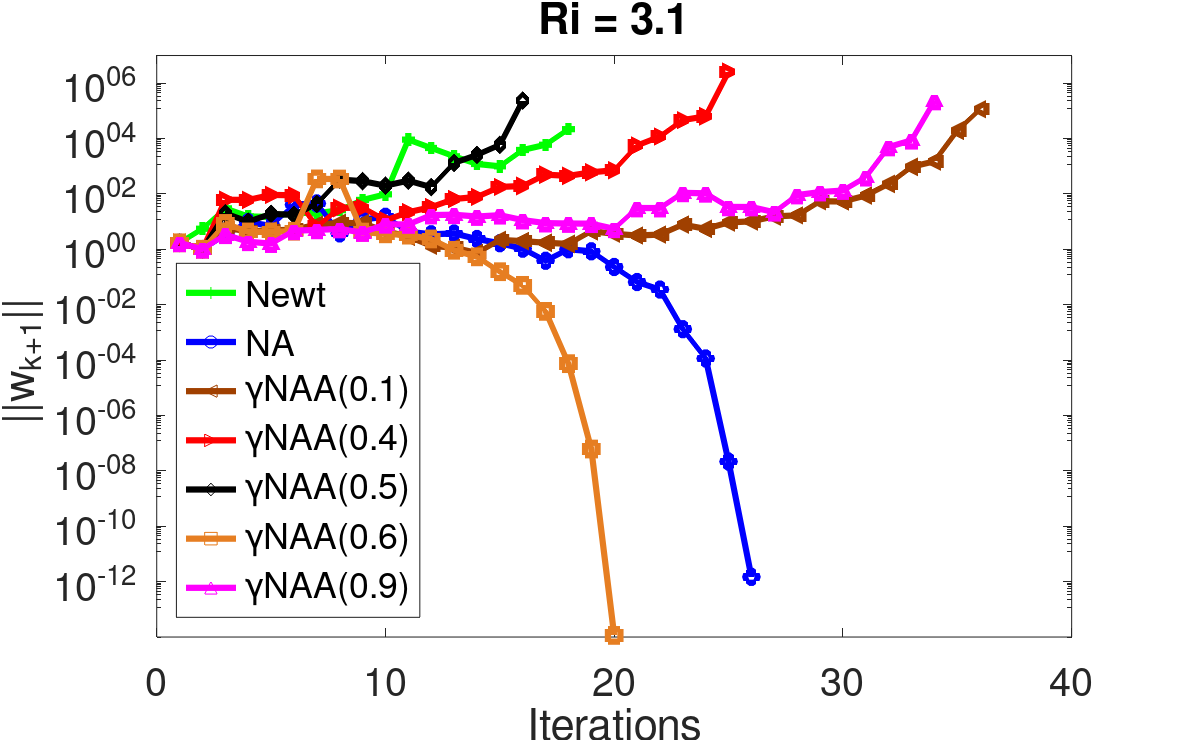}
\includegraphics[width=80mm]{./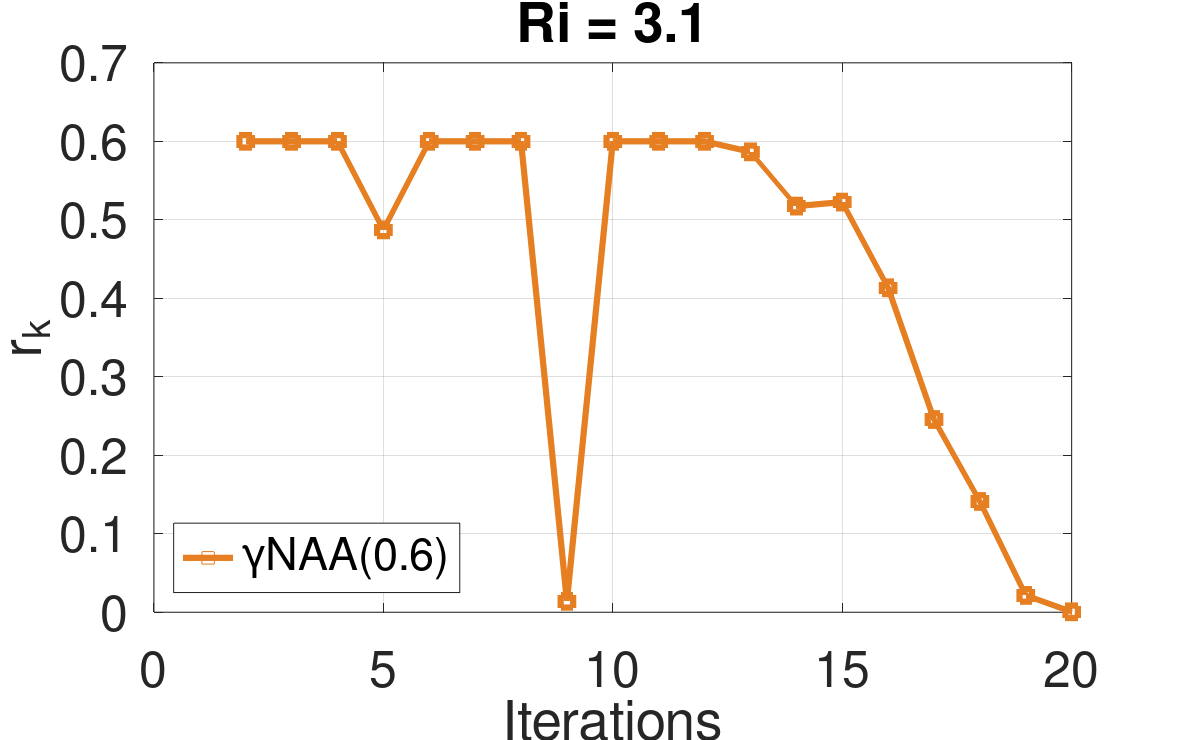}
\caption{ \label{fig:preasym-ri3.1} Comparison of 
$\gna$ applied preasymptotically with Newton and NA applied 
to Model \eqref{rayben-model} with $\ri = 3.1$.}
\end{figure}

\begin{figure}[H]
\centering
\includegraphics[width=80mm]{./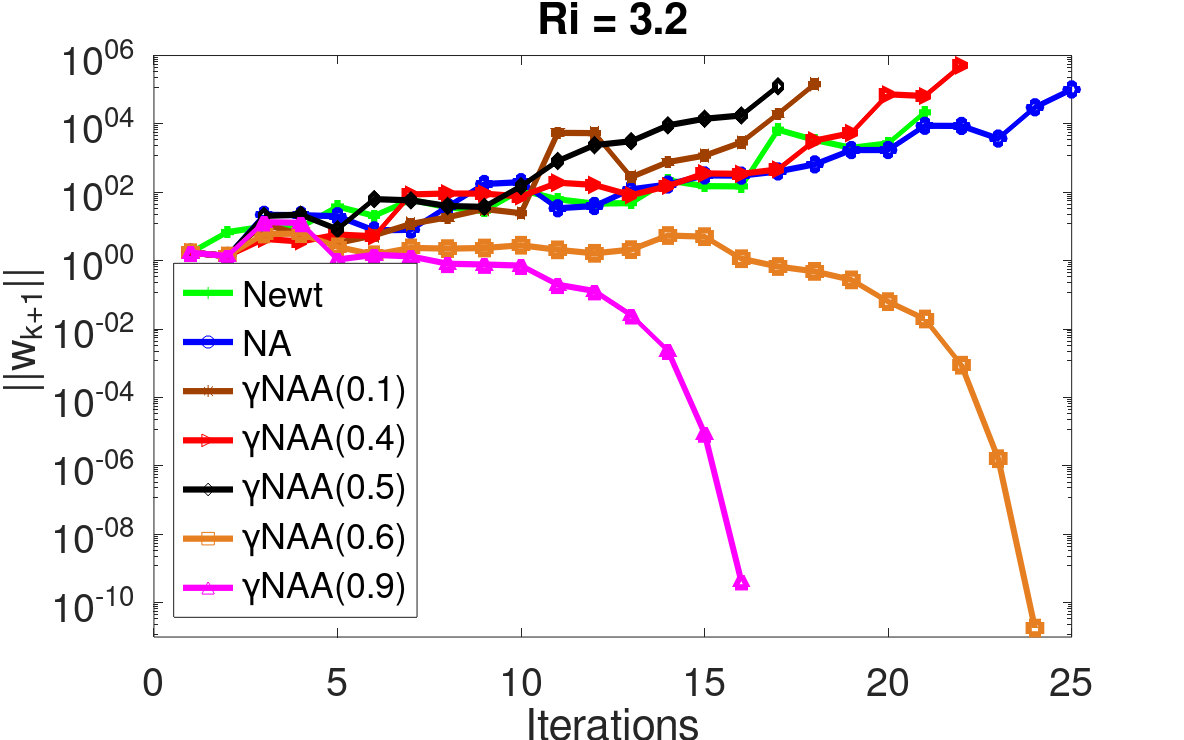}
\includegraphics[width=80mm]{./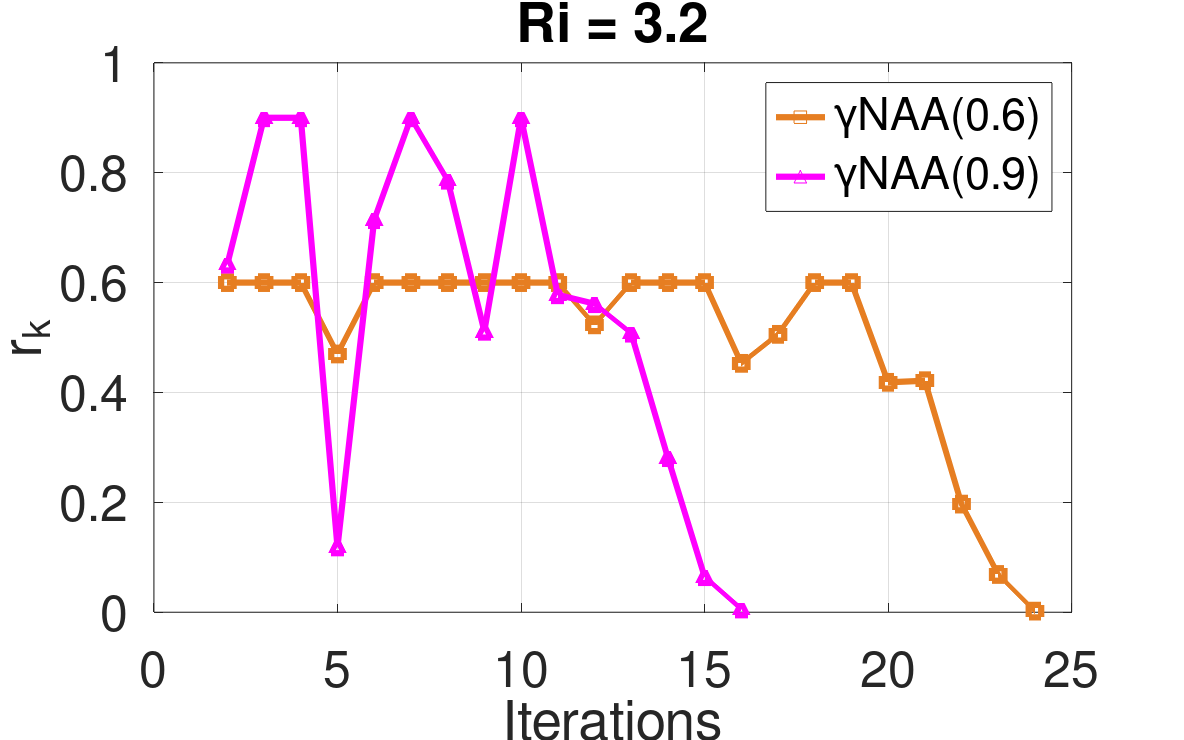}
\caption{ \label{fig:preasym-ri3.2} Comparison of 
$\gna$ applied preasymptotically with Newton and NA applied 
to Model \eqref{rayben-model} with $\ri = 3.2$.}
\end{figure}

\begin{figure}[H]
\centering
\includegraphics[width=80mm]{./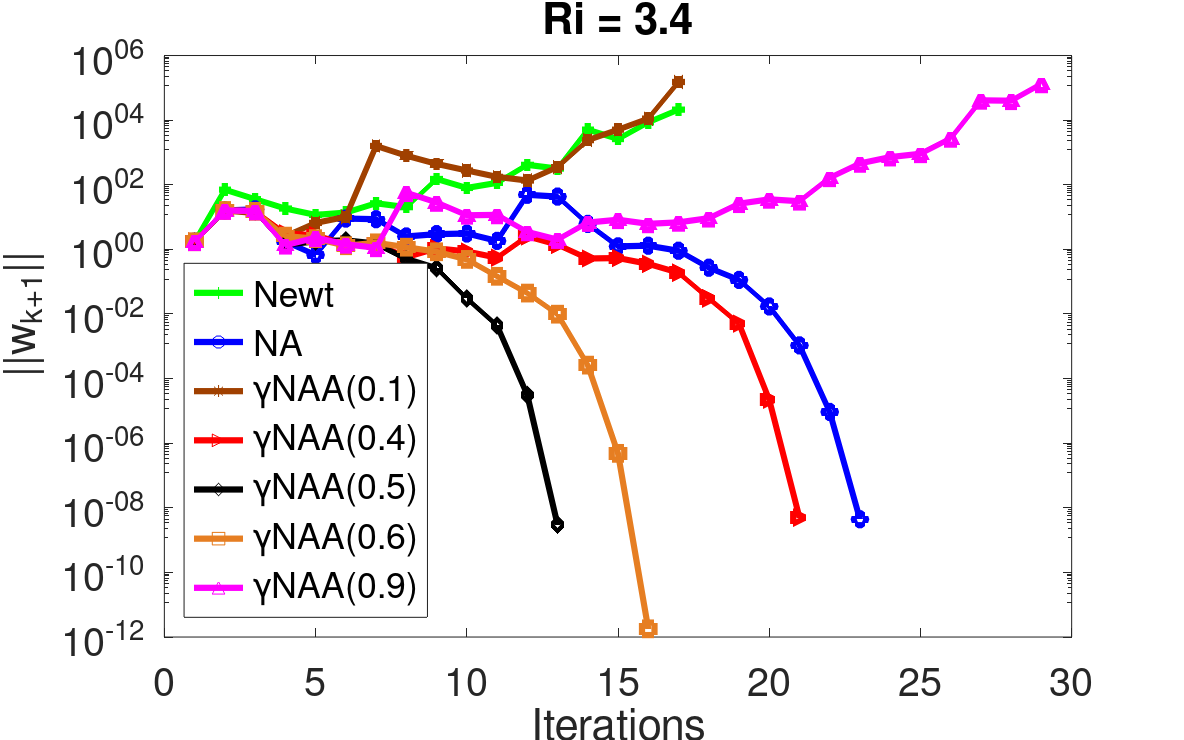}
\includegraphics[width=80mm]{./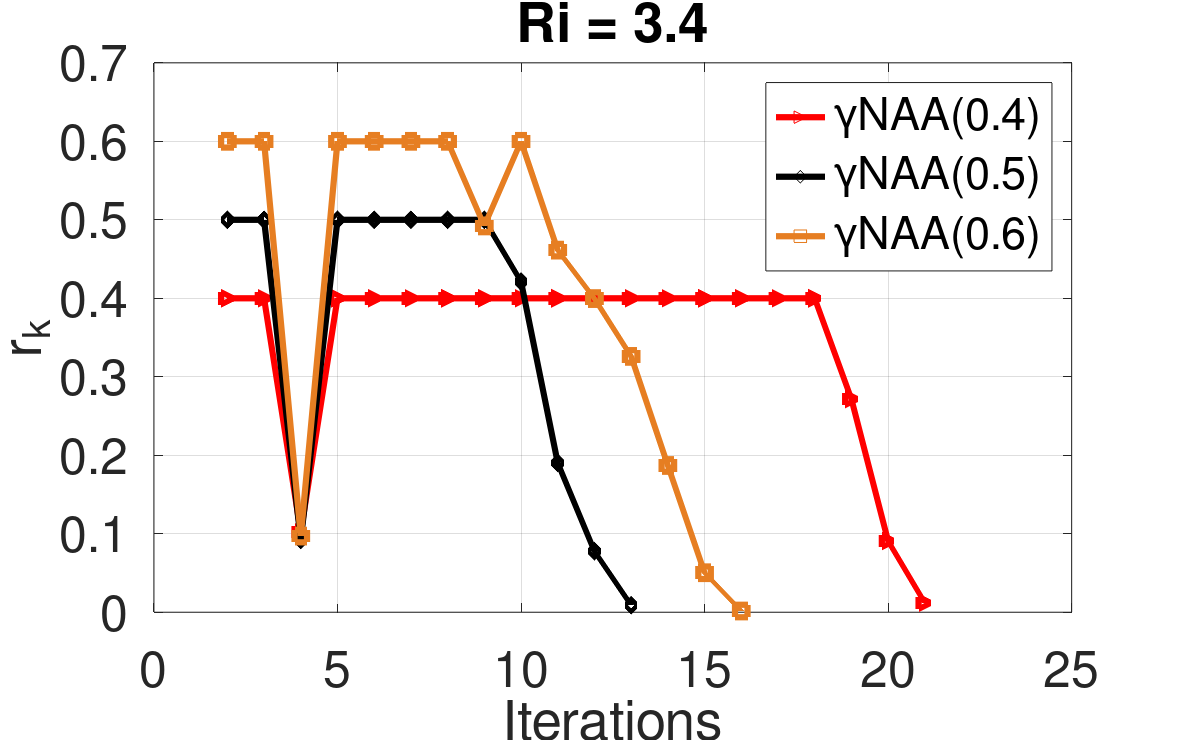}
\caption{ \label{fig:preasym-ri3.4} Comparison of 
$\gna$ applied preasymptotically with Newton and NA applied 
to Model \eqref{rayben-model} with $\ri = 3.4$.}
\end{figure}

\subsection{Increasing Anderson Depth}
\label{sec:incnewtdepth}

{ 
The strategy employed in this section is to increase the 
depth of the NA from $m=1$ used in previous sections.}

\subsubsection{Results for Channel Flow Model}
The plots in Figure \ref{fig:m3comp1} demonstrate the 
effectiveness of increasing the algorithmic depth $m$ 
to solve Model \eqref{coanda-model} near the bifurcation point. We also experimented with applying $\gna$ asymptotically. When $\|w_{k+1}\| < 1$, we set $m=1$ and activated $\gna$ 
with $\hat{r}=0.9$. 
These results are seen as the 
dashed lines in the left-most plot in Figure 
\ref{fig:m3comp1}. 
The philosophy is similar to that of 
preasymptotic safeguarding. We use NA(3) to reach the 
asymptotic regime, and then allow adaptive $\gamma$-safeguarding 
to detect if the problem is nonsingular. 
We set $m=1$ because, 
for the present, $\gamma$-safeguarding is only designed 
for $m=1$. The left-most plot in Figure \ref{fig:m3comp1} 
shows how, from the same initial iterate, we are able 
to solve Model \eqref{coanda-model} for a wider range of 
$\mu$ values, including in the regime where Newton, 
$\NA (1)$, and $\gna$ failed to converge.  

The right-most
plot in Figure \ref{fig:m3comp1} focuses on the 
results of applying $\NA (3)$ to Model \eqref{coanda-model} 
with $\mu = 0.92$. The point here is that even in the 
regime where $\NA (1)$ converges, $\NA (3)$ converges
 in about half as many iterations. Thus increasing 
the algorithmic depth can lead to faster convergence.
The improved convergence seen with increasing the depth $m$ suggests that 
a generalization of $\gamma$-safeguarding for greater depths could be 
useful as a generalization of the strategies presented in previous sections. 
However, for this strategy to be effective in general, 
further study is needed on the proper choice of depth $m$. 
In the chosen parameter regime, $m=3$ 
was the only value of $m\in\{1,2,...,10\}$ observed to improve convergence
when NA(1) failed.

\begin{figure}[H]
\centering
\includegraphics[width=80mm]{./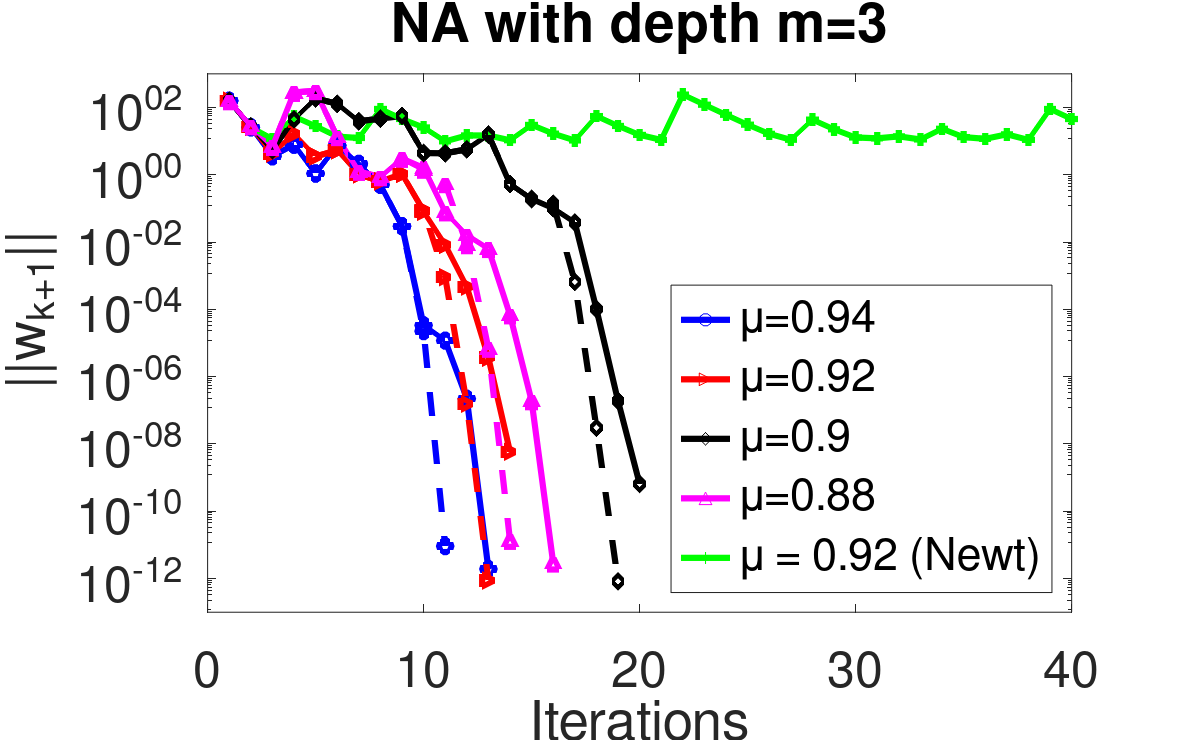}
\includegraphics[width=80mm]{./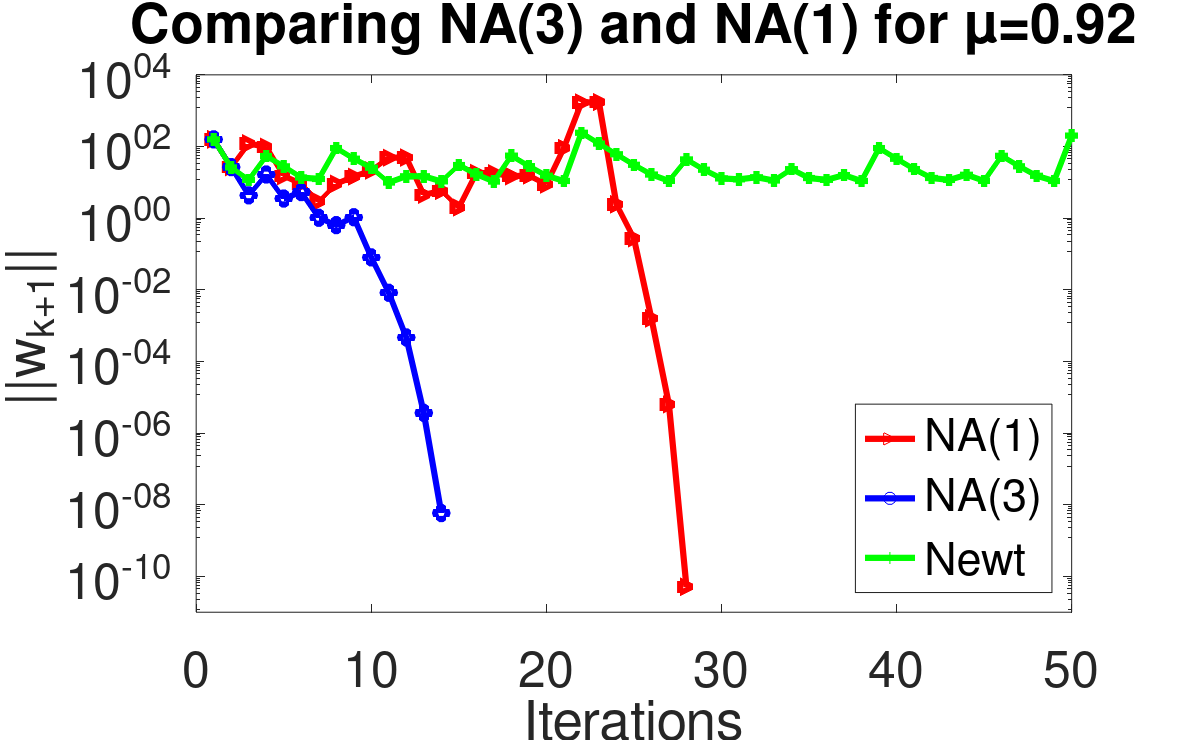}
\caption{\label{fig:m3comp1} Results from increasing Anderson 
depth $m$. Left: Convergence 
history of NA(3) for various choices of $\mu$ near $\mu^*$. Solid lines are NA$(3)$, and 
dashed lines are NA$(3)$ with asymptotic $\gna$. 
Right: Convergence history 
for NA(1) and NA(3) with $\mu=0.92$.}
\end{figure}

\subsubsection{Results for Rayleigh-B\'enard Model}
The results of increasing $m$ to solve Model \eqref{rayben-model}
are shown below in 
Figure \ref{fig:bouss_nam_compare}. 
As with Model \eqref{coanda-model}, we experimented with reducing 
$m=1$ and activating $\gamma$-safeguarding asymptotically. The dashed lines in Figure \ref{fig:bouss_nam_compare} are the results of these experiments.
For this problem, activation occurred when 
$\|w_{k+1}\| < 10^{-1}$ since we found that activating 
$\gna$ with $\hat{r}=0.9$ when $\|w_{k+1}\|<1$ broke convergence like it did 
in Section \ref{sec:rayben-asymsg}.
We once again 
observe that the right choice of $m > 1$
can improve convergence 
significantly. This can be seen in Figure \ref{fig:bouss_nam_compare}
for $\ri = 3.3$. Moreover, Figure \ref{fig:bouss_nam_compare} demonstrates that increasing $m$ can recover convergence when $\NA (1)$ 
fails to converge for $\ri = 3.0$ and $\ri = 3.2$. 
However, for $\ri =$ 3.1, 3.4, and 3.5, there was no significant improvement
 gained from increasing $m > 1$. The results for 
$\ri = 3.4$ shown below in Figure 
\ref{fig:bouss_nam_compare} are representative of the results for $\ri = 3.1$ and $\ri=3.5$. 

The most significant difference between 
$\NA (m)$ without asymptotic $\gna$, and $\NA (m)$ with asymptotic $\gna$, is  
seen with $m=10$.
It also appears that $m=10$ is more sensitive to the 
activation threshold than smaller choices of $m$. This is 
seen in Figure \ref{fig:bouss_nam_compare} for $\ri = 3.3$
and $\ri = 3.4$. These results are promising, and 
motivate further investigation to fully understand 
how the choice of $m$ affects convergence near singularities,
and in particular, near bifurcation points.

\begin{figure}[H]
\centering
\includegraphics[width=80mm]{./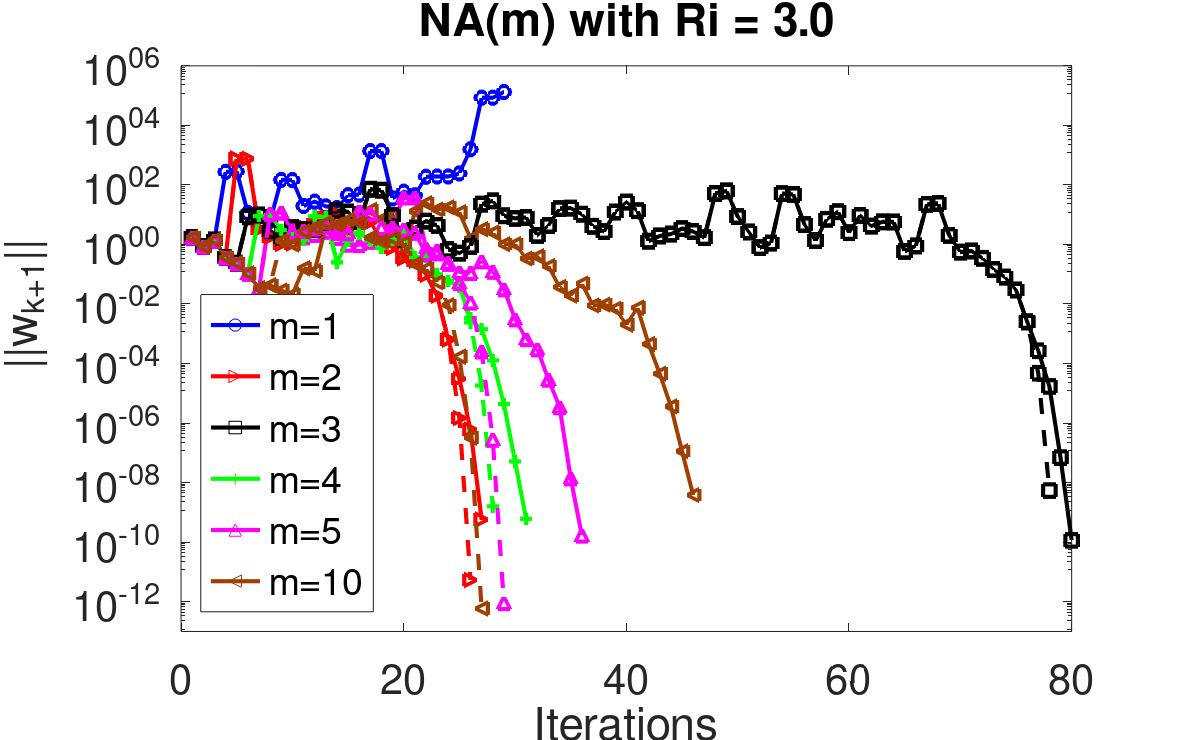}
\vspace{0.5em}
\includegraphics[width=80mm]{./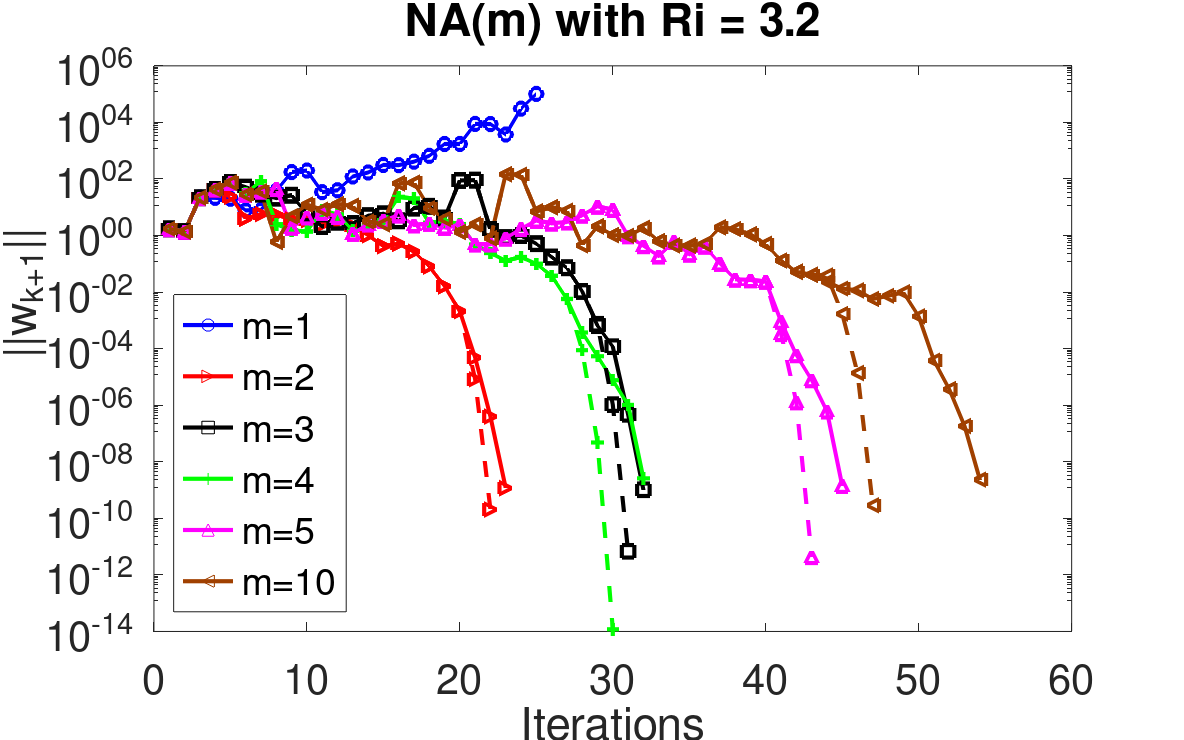}
\vspace{0.5em}
\includegraphics[width=80mm]{./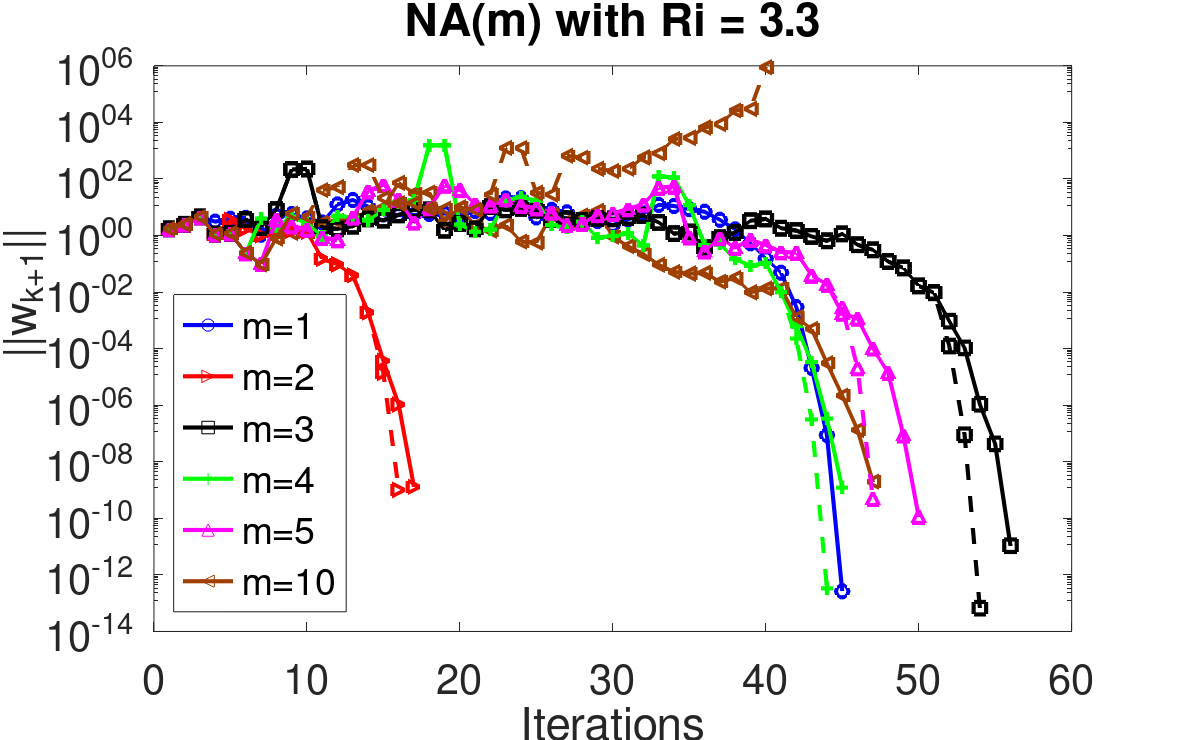}
\vspace{0.5em}
\includegraphics[width=80mm]{./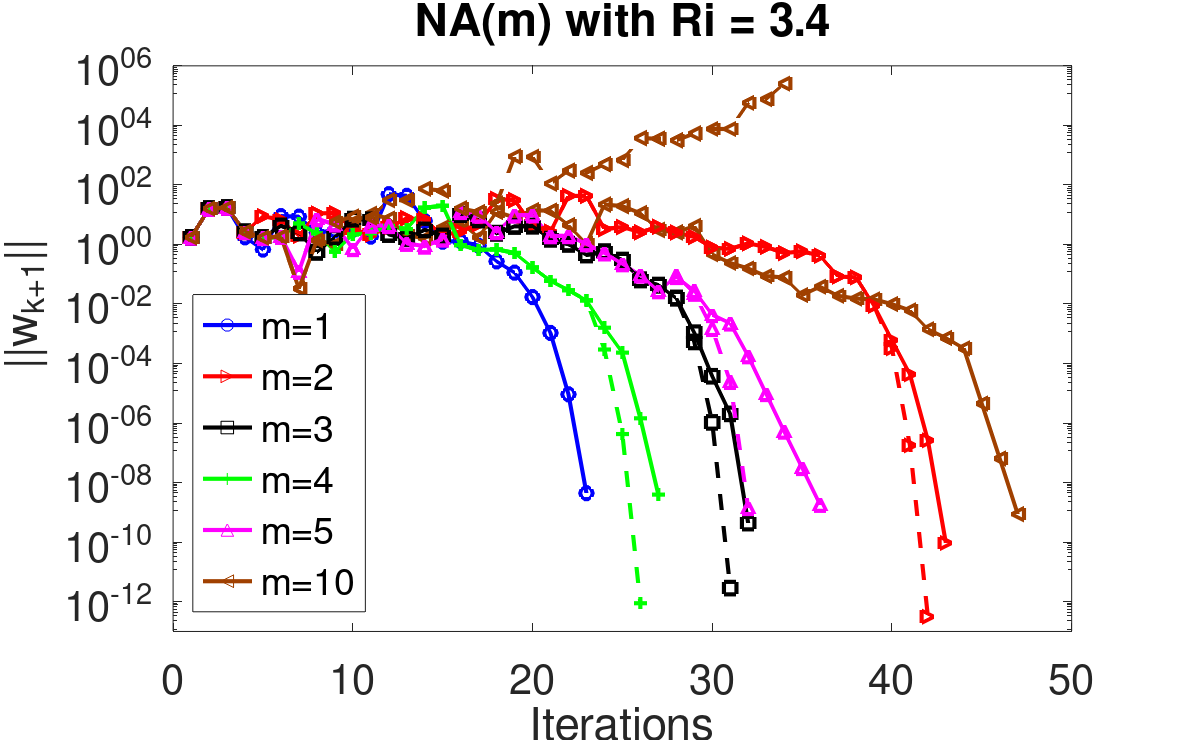}
\vspace{0.5em}
\caption{ \label{fig:bouss_nam_compare} Convergence history 
of $\NA (m)$ for $m=$ 1,2,3,4,5, and 10 for $\ri=$3.0, 3.2, 3.3, and 3.4. Dashed lines denote $\NA (m)$ with asymptotic 
$\gna$ with $\hat{r}=0.9$ activated when $\|w_{k+1}\|<10^{-1}$.}
\end{figure}

\section{Conclusion}
We have presented a modification of Anderson accelerated 
Newton's method for solving nonlinear equations near 
bifurcation points. We proved that, locally, this modified
scheme can detect nonsingular problems and scale the 
iterates towards a pure Newton step, which leads to faster
local convergence compared to standard NA. 
We numerically demonstrated two strategies one can 
employ when using our modified NA scheme 
to solve nonlinear problems near bifurcation points, with
our test problems being two Navier-Stokes type parameter-dependent PDEs. Asymptotic safeguarding was shown to recover
local quadratic convergence when the problem is nonsingular,
and it shows virtually no sensitivity to the choice of 
parameter $\hat{r}$. It can, however, be sensitive to the 
choice of activation threshold. Preasymptotic safeguarding 
is shown to significantly improve convergence, and 
can recover convergence when both Newton and $\NA$ fail. 
There is strong sensitivity to the choice of $\hat{r}$ though,
and future work will clarify this dependence.
We also demonstrated that increasing the Anderson depth
$m$ can improve convergence, and increase the domain of 
convergence with respect to the problem parameter. Future 
projects will further study how the choice of $m$ impacts 
convergence, and work towards developing $\gamma$-safeguarding 
for greater algorithmic depths. 

\section{Acknowledgements}

MD and SP are supported in part by the National Science 
Foundation under Grant No. DMS-2011519. LR is supported in 
part by the National Science Foundation under Grant No. 
DMS-2011490. This material is based upon work supported by 
the National Science Foundation under Grant No. DMS-1929284 
while the authors were in residence at the Institute for 
Computational and Experimental Research in Mathematics in 
Providence, RI, during the Acceleration and Extrapolation 
Methods (MD, SP and LR), and the Numerical PDEs: Analysis, 
Algorithms and Data Challenges (SP and LR), programs.

\bibliographystyle{plain}
\bibliography{app_to_bif_pts.bib}

\begin{thebibliography}{10}

\bibitem{AJW17}
H.~An, X.~Jia, and H.F. Walker.
\newblock Anderson acceleration and application to the three-temperature energy
  equations.
\newblock {\em J. Comput. Phys.}, 347:1--19, 2017.

\bibitem{Anderson65}
D.G. Anderson.
\newblock Iterative procedures for nonlinear integral equations.
\newblock {\em J. Assoc. Comput. Mach.}, 12(4):547--560, 1965.

\bibitem{BeFi12}
R.~Behling and A.~Fischer.
\newblock A unified local convergence analysis of inexact constrained
  {L}evenberg-{M}arquardt methods.
\newblock {\em Optim. Lett.}, 6:927--940, 2012.

\bibitem{BFHIY14}
R.~Behling, A.~Fischer, M.~Herrich, A.~Iusem, and Y.~Ye.
\newblock A {L}evenberg-{M}arquardt method with approximate projections.
\newblock {\em Comput. Optim. Appl.}, 59:5--26, 2014.

\bibitem{BelMo15}
S.~Bellavia and B.~Morini.
\newblock Strong local convergence properties of adaptive regularized methods
  for nonlinear least squares.
\newblock {\em IMA J. Numer. Anal.}, 35, 2015.

\bibitem{BePi20}
R.G. Bettiol and P.~Piccione.
\newblock Instability and bifurcation.
\newblock {\em Notices Am. Math. Soc.}, 67(11):1679--1691, 2020.

\bibitem{BoDaFa22}
N.~Boull{\'e}, V.~Dallas, and P.E. Farrell.
\newblock Bifurcation analysis of two-dimensional {R}ayleigh-{B}{\'e}nard
  convection using deflation.
\newblock {\em Phys. Rev. E}, 105, 2022.

\bibitem{braess07}
D.~Braess.
\newblock {\em Finite Elements: Theory, Fast Solvers, and Applications in Solid
  Mechanics}.
\newblock Cambridge University Press, 2007.

\bibitem{ChDuLeSe21}
M.~Chupin, M.~Dupuy, G.~Legendre, and E.~S{\'e}r{\'e}.
\newblock Convergence analysis of adaptive {DIIS} algorithms with application
  to electronic ground state calculations.
\newblock {\em ESAIM Math. Model. Numer. Anal.}, 55(6):2785--2825, 2021.

\bibitem{DaPo23}
M.~Dallas and S.~Pollock.
\newblock Newton-{A}nderson at singular points.
\newblock {\em Int. J. Numer. Anal. and Mod.}, 20(5):667--692, 2023.

\bibitem{DeKeKe83}
D.W. Decker, H.B. Keller, and C.T. Kelley.
\newblock Convergence rates for {N}ewton’s method at singular points.
\newblock {\em SIAM J. Numer. Anal.}, 20(2):296--314, 1983.

\bibitem{DeKe80}
D.W. Decker and C.T. Kelley.
\newblock Newton’s method at singular points {I}.
\newblock {\em SIAM J. Numer. Anal.}, 17(1):66--70, 1980.

\bibitem{DeKe82}
D.W. Decker and C.T. Kelley.
\newblock Convergence acceleration for {N}ewton’s method at singular points.
\newblock {\em SIAM J. Numer. Anal.}, 19(1):219--229, 1982.

\bibitem{DeKe85}
D.W. Decker and C.T. Kelley.
\newblock Expanded convergence domains for {N}ewton’s method at nearly
  singular roots.
\newblock {\em SIAM J. Sci. Stat. Comput.}, 6(4), 1985.

\bibitem{deuf05}
P.~Deuflhard.
\newblock {\em {N}ewton Methods for Nonlinear Problems}.
\newblock Springer Series in Computational Mathematics. Springer, 2005.

\bibitem{FaZe13}
J.~Fan and J.~Zeng.
\newblock A {L}evenberg–{M}arquardt algorithm with correction for singular
  system of nonlinear equations.
\newblock {\em Appl. Math. Comput.}, 219(17):9438--9446, 2013.

\bibitem{FaBiFu15}
P.E. Farrell, A.~Birkisson, and S.W. Funke.
\newblock Deflation techniques for finding distinct solutions of nonlinear
  partial differential equations.
\newblock {\em SIAM J. Sci. Comput.}, 37, 2015.

\bibitem{FiIzSo21-2}
A.~Fischer, A.F. Izmailov, and M.V. Solodov.
\newblock Accelerating convergence of the globalized {N}ewton method to
  critical solutions of nonlinear equations.
\newblock {\em Comput. Optim. Appl.}, 78:273--286, 2021.

\bibitem{FiIzSo21-1}
A.~Fischer, A.F. Izmailov, and M.V. Solodov.
\newblock Unit stepsize for the {N}ewton method close to critical solutions.
\newblock {\em Math. Program.}, 187:697--721, 2021.

\bibitem{GaLiReWi12}
J.K. Galvin, A.~Linke, L.G. Rebholz, and N.~Wilson.
\newblock Stabilizing poor mass conservation in incompressible flow problems
  with large irrotational forcing and application to thermal convection.
\newblock {\em Comput. Methods Appl. Mech. Engrg.}, pages 166--176, 2012.

\bibitem{griewank-thesis}
A.O. Griewank.
\newblock {\em Analysis and Modification of {N}ewton's Method at
  Singularities}.
\newblock PhD thesis, 1980.

\bibitem{Gr80}
A.O. Griewank.
\newblock Starlike domains of convergence for {N}ewton's method at
  singularities.
\newblock {\em Numer. Math.}, 35:95--111, 1980.

\bibitem{Gr85}
A.O. Griewank.
\newblock On solving nonlinear equations with simple singularities or nearly
  singular solutions.
\newblock {\em SIAM Review}, 27(4):537--563, 1985.

\bibitem{GuSc19}
J.~Guz\'man and L.R. Scott.
\newblock The {S}cott-{V}ogelius finite elements revisited.
\newblock {\em Math. Comp.}, 88(316):515--529, 2019.

\bibitem{HeZhXiHoSa22}
H.~He, S.~Zhao, Y.~Xi, J.C. Ho, and Y.~Saad.
\newblock Solve minimax optimization by {A}nderson acceleration.
\newblock {\em International Conference on Learning Representations}, 2022.

\bibitem{HeVa19}
N.C. Henderson and R.~Varadhan.
\newblock Damped {A}nderson acceleration with restarts and monotonicity control
  for accelerating {EM} and {EM}-like algorithms.
\newblock {\em Journal of computational and graphical statistics},
  28(4):834--846, 2019.

\bibitem{HMT09}
J.L. Hueso, E.M., and J.R. Torregrosa.
\newblock Modified {N}ewton’s method for systems of nonlinear equations with
  singular {J}acobian.
\newblock {\em J. Comput. Appl. Math.}, 224(1):77--83, 2008.

\bibitem{IzKuSo18-1}
A.F. Izmailov, A.S. Kurennoy, and M.V. Solodov.
\newblock Critical solutions of nonlinear equations: local attraction for
  {N}ewton-type methods.
\newblock {\em Math. Progam.}, 167:355--379, 2018.

\bibitem{IzKuSo18-2}
A.F. Izmailov, A.S. Kurennoy, and M.V. Solodov.
\newblock Critical solutions of nonlinear equations: stability issues.
\newblock {\em Math. Progam.}, 168:475--507, 2018.

\bibitem{KaYaFu04}
C.~Kanzow, N.~Yamashita, and M.~Fukushima.
\newblock {L}evenberg–{M}arquardt methods with strong local convergence
  properties for solving nonlinear equations with convex constraints.
\newblock {\em J. Comput. Appl. Math}, 172:375--397, 2004.

\bibitem{KeSu83}
C.T. Kelley and R.~Suresh.
\newblock A new acceleration method for {N}ewton's method at singular points.
\newblock {\em SIAM J. Numer. Anal}, 20(5):1001--1009, 1983.

\bibitem{Kiel12}
H.~Kielh{\"o}fer.
\newblock {\em Bifurcation Theory: An Introduction with Applications to Partial
  Differential Equations}.
\newblock Springer Science+Business Media, 2012.

\bibitem{Ku19}
C.~Kuehn.
\newblock {\em PDE Dynamics: An Introduction}.
\newblock SIAM, 2019.

\bibitem{LWWY12}
P.A. Lott, H.F. Walker, C.S. Woodward, and U.M. Yang.
\newblock An accelerated {P}icard method for nonlinear systems related to
  variably saturated flow.
\newblock {\em Adv. Water Resour.}, 38:92--101, 2012.

\bibitem{Or68}
J.M. Ortega.
\newblock The {N}ewton-{K}antorovich theorem.
\newblock {\em Amer. Math. Monthly}, 75(6):658--660, 1968.

\bibitem{Pa19}
M.L. Pasini.
\newblock Convergence analysis of {A}nderson-type acceleration of
  {R}ichardson's iteration.
\newblock {\em Numer. Linear Algebra Appl.}, 26, 2019.

\bibitem{PaMa22}
M.L. Pasini, J.~Yin, V.~Reshniak, and M.K. Stoyanov.
\newblock Anderson acceleration for distributed training of deep learning
  models.
\newblock In {\em SoutheastCon 2022}, pages 289--295, 2022.

\bibitem{sim-app-2018}
Y.~Peng, B.~Deng, J.~Zhang, F.~Geng, W.~Qin, and L.~Liu.
\newblock Anderson acceleration for geometry optimization and physics
  simulation.
\newblock {\em ACM Trans. Graph.}, 37(4), 2018.

\bibitem{pichithesis}
F.~Pichi.
\newblock {\em Reduced order models for parametric bifurcation problems in
  nonlinear PDEs}.
\newblock PhD thesis, 2020.

\bibitem{PiQuRo20}
F.~Pichi, A.~Quaini, and G.~Rozza.
\newblock A reduced order modeling technique to study bifurcating phenomena:
  Application to the {G}ross--{P}itaevskii equation.
\newblock {\em SIAM J. Sci. Comput.}, 42(5), 2020.

\bibitem{PiStBaRo22}
F.~Pichi, M.~Strazzullo, F.~Ballarin, and G.~Rozza.
\newblock Driving bifurcating parametrized nonlinear pdes by optimal control
  strategies: application to {N}avier–{S}tokes equations with model order
  reduction.
\newblock {\em ESAIM: M2AN}, 56(4):1361--1400, 2022.

\bibitem{PoRe21}
S.~Pollock and L.G. Rebholz.
\newblock {Anderson acceleration for contractive and noncontractive operators}.
\newblock {\em IMA J. Numer. Anal.}, 41(4):2841--2872, 2021.

\bibitem{PoRe23}
S.~Pollock and L.G. Rebholz.
\newblock Filtering for {A}nderson acceleration.
\newblock {\em SIAM J. Sci. Comput.}, 45(4):A1571--A1590, 2023.

\bibitem{PRX18}
S.~Pollock, L.G. Rebholz, and M.~Xiao.
\newblock Anderson-accelerated convergence of {P}icard iterations for
  incompressible {N}avier-{S}tokes equations.
\newblock {\em SIAM J. Numer. Anal.}, 57(2):615--637, 2019.

\bibitem{posc20}
S.~Pollock and H.~Schwartz.
\newblock Benchmarking results for the {N}ewton–{A}nderson method.
\newblock {\em Results Appl. Math.}, 8:100095, 2020.

\bibitem{qin-thesis}
J.~Qin.
\newblock {\em On the convergence of some low order mixed finite elements for
  incompressible fluids}.
\newblock PhD thesis, 1994.

\bibitem{ReXi23}
L.G. Rebholz and M.~Xiao.
\newblock The effect of {A}nderson acceleration on superlinear and sublinear
  convergence.
\newblock {\em J. Sci. Comput.}, 96(2), 2023.

\bibitem{Re78}
G.W. Reddien.
\newblock On {N}ewton's method for singular problems.
\newblock {\em SIAM J. Numer. Anal.}, 15(5):993--996, 1978.

\bibitem{Re79}
G.W. Reddien.
\newblock Newton's method and high order singularities.
\newblock {\em Comput. Math. Appl.}, 5(2):79--86, 1979.

\bibitem{ScFr84}
R.B. Schnabel and P.D. Frank.
\newblock Tensor methods for nonlinear equations.
\newblock {\em SIAM J. Numer. Anal.}, 21(5):815--843, 1984.

\bibitem{TRL04}
R.B. Thompson, K.O. Rasmussen, and T.~Lookman.
\newblock Improved convergence in block copolymer self-consistent field theory
  by {A}nderson mixing.
\newblock {\em J. Chem. Phys.}, 120(1):31--34, 2004.

\bibitem{TuWaSh02}
R.S. Tuminaro, H.F. Walker, and J.~N. Shadid.
\newblock On backtracking failure in {N}ewton-{GMRES} methods with
  demonstration for the {N}avier-{S}tokes equations.
\newblock {\em J. Comput. Phys.}, 180:549--558, 2002.

\bibitem{Ue21}
H.~Uecker.
\newblock Continuation and bifurcation in nonlinear pdes – algorithms,
  applications, and experiments.
\newblock {\em Jahresbericht der Deutschen Mathematiker-Vereinigung},
  124:43--80, 2021.

\bibitem{WaNi11}
H.F. Walker and P.~Ni.
\newblock Anderson acceleration for fixed-point iterations.
\newblock {\em SIAM J. Numer. Anal.}, 49(4):1715--1735, 2011.

\bibitem{WHdS21}
D.~Wang, Y.~He, and H.~De Sterck.
\newblock On the asymptotic linear convergence speed of {A}nderson acceleration
  applied to {ADMM}.
\newblock {\em J. Sci. Comput.}, 88(2):38, 2021.

\bibitem{YuSuRe20}
Q.~Yuan, Y.~Sun, and J.~Ren.
\newblock How interest rate influences a business cycle model.
\newblock {\em Discrete Contin. Dyn. Syst.}, 13(11):3231--3251, 2020.

\bibitem{ZhQi23}
H.~Zhou and Y.~Qian.
\newblock Double {H}opf bifurcation analysis for coupled van der
  {P}ol–{R}ayleigh system with time delay.
\newblock {\em J. Vib. Eng. Technol.}, 12:6075--6087, 2024.

\end{thebibliography}

\end{document}